\newenvironment{myproof}[1][\proofname]{%
  \proof[Proof #1]%
}{\endproof}
\newtheorem{Prop}[equation]{Proposition}
\newtheorem{Thm}[equation]{Theorem}
\newtheorem{Lem}[equation]{Lemma}
\newtheorem{Cor}[equation]{Corollary}
\theoremstyle{definition}\newtheorem{Def}[equation]{Definition}
\newtheorem{Ex}[equation]{Example}
\newtheorem{Rem}[equation]{Remark}
\theoremstyle{definition}
\theoremstyle{definition}\newtheorem{Conj}[equation]{Conjecture}
\theoremstyle{definition}\newtheorem{Not}[equation]{Notation}
\theoremstyle{definition}\newtheorem{Constr}[equation]{Construction}
\newcommand{\Q}{\mathbb{Q}}
\newcommand{\oK}{\overline{K}}
\newcommand{\oD}{\overline{D}}
\newcommand{\Z}{\mathbb{Z}}
\newcommand{\HQ}{\mathbf{H}}
\newcommand{\Int}{\textnormal{Int}}   
\newcommand{\IntQ}{\Int_{\Q}}   
\newcommand{\Max}{\textnormal{Max}}
\newcommand{\Pruf}{Pr\"{u}fer}
\newcommand{\Spec}{\text{Spec}}
\newcommand{\OurD}{ADDB}
\newcommand{\smat}[4]{[\begin{smallmatrix} #1 & #2 \\ #3 & #4 \end{smallmatrix}]}
\newcommand{\Gal}{\text{Gal}}
\newcommand{\mfa}{\mathfrak{a}}
\newcommand{\mfI}{\mathfrak{I}}
\newcommand{\mfm}{\mathfrak{m}}
\newcommand{\mfp}{p}
\newcommand{\mfM}{\mathfrak{M}}
\newcommand{\msJ}{\mathscr{J}}
\newcommand{\msD}{\mathscr{D}}
\newcommand{\msE}{\mathscr{E}}
\newcommand{\mcD}{\mathcal{D}}
\newcommand{\mcR}{\mathcal{R}}
\newcommand{\bfi}{\mathbf{i}}
\newcommand{\bfj}{\mathbf{j}}
\newcommand{\bfk}{\mathbf{k}}
\newcommand{\bfh}{\mathbf{h}}
\numberwithin{equation}{section}
\title{A classification of Pr\"{u}fer domains of integer-valued polynomials on algebras}
\author{Giulio Peruginelli\footnote{Department of Mathematics ``Tullio Levi-Civita'' University of Padova, Via Trieste, 63 35121 Padova, Italy. gperugin@math.unipd.it}
\and
Nicholas J. Werner\footnote{Department of Mathematics, Computer and Information Science, State University of New York at Old Westbury, Old Westbury, NY 11560, USA. wernern@oldwestbury.edu}}
\date{\today}
\begin{document}

\leftmark{\noindent  accepted for publication in the Bulletin of the London Mathematical Society (2026)}
{\let\newpage\relax\maketitle}

\begin{abstract}
\noindent Let $D$ be an integrally closed domain with quotient field $K$  and $A$ a torsion-free $D$-algebra that is finitely generated  as a $D$-module and such that $A\cap K=D$. We give a complete classification of those $D$ and $A$ for which the ring $\Int_K(A)=\{f\in K[X] \mid f(A)\subseteq A\}$ is a \Pruf{} domain. If $D$ is a semiprimitive domain, then we prove that $\Int_K(A)$ is Pr\"ufer if and only if $A$ is commutative and isomorphic to a finite direct product of almost Dedekind domains with finite residue fields, each of them satisfying  a double-boundedness condition on its ramification indices and residue field degrees.
\end{abstract}

{\small \noindent Keywords: integer-valued polynomial, Pr\"ufer domain, algebra, integrally closed domain, almost Dedekind domain.

\noindent MSC Primary 13F20, 13F05, 12J20,  13A18.\\}

\section{Introduction}

Let $D$ be an integrally closed domain with quotient field $K$. We will always assume that $D$ is properly contained in $K$. Let $A$ be a $D$-algebra which is a torsion-free and finitely generated $D$-module. Let $B=A\otimes_D K$  be the extended $K$-algebra; note that $B$ is equal to the localization $(D\setminus\{0\})^{-1}A$. We can embed $K$ and $A$ inside $B$ via the natural mappings $k \mapsto 1 \otimes k$ and $a \mapsto a \otimes 1$, and we will use this fact throughout this paper. For a subset $S$ of $A$ we define the ring of integer-valued polynomials over $S$ as:
$$\Int_K(S,A)=\{f\in K[X] \mid f(S)\subseteq A\}.$$
When $S=A$, we set $\Int_K(A,A)=\Int_K(A)$. We will also assume throughout that $A\cap K=D$ which amounts to say that $\Int_K(A)\subseteq\Int_K(D)=\Int(D)$, the ring of integer-valued polynomials over $D$.

Integer-valued polynomials have a long history in commutative ring theory. One problem that attracted significant interest \cites{Chabert1987, GilmPrufer, Chab1993, Loper1997} was to classify the domains $D$ such that $\Int(D)$ is a \Pruf{} domain. 

Necessary and sufficient conditions on $D$ for this to occur were determined in \cite{LopIntD}. Since the late 2000s, attention has turned to rings of the form $\Int_K(A)$ and $\Int_K(S,A)$ \cites{Werner2010, LopWer, EFJ2013, Frisch2013,   FrischCorr, Per1, EvrardJohnson2015, Frisch2017, HNR2020, Mulay, NagHaf2024}. Here, as in the traditional case where $A=D$, researchers have studied situations in which these rings are or are not \Pruf{}. For instance, when $D=\Z$ and $A$ is a ring of algebraic integers, it is known that $\Int_\Q(A)$ is \Pruf{} \cite[Theorem 3.7]{LopWer}. By contrast, if $D$ is a Dedekind domain with finite residue fields and $A$ is the matrix algebra $A=M_n(D)$ with $n \geq 2$, then $\Int_K(A)$ is never integrally closed \cite[Corollary 3.4]{PerWerPropInt}, hence cannot be \Pruf{}.

In this paper we give a complete answer to \cite[Problem 28]{CFSG}, namely, establishing exactly when $\Int_K(A)$ is a \Pruf{} domain. Since $\Int_K(A) \subseteq \Int(D)$ and overrings of \Pruf{} domains are \Pruf{}, in order for $\Int_K(A)$ to be \Pruf{} it is necessary that $\Int(D)$ be \Pruf{}. Recall that $D$ is an almost Dedekind domain if $D_M$ is a discrete valuation ring for each maximal ideal $M$ of $D$. Note that a Noetherian almost Dedekind domain is a Dedekind domain. In order for $\Int(D)$ to be \Pruf{}, it is necessary (but not sufficient) for $D$ to be an almost Dedekind domain with finite residue fields \cite[Proposition 6.3]{Chabert1987}. For such a domain, we adopt the following notations and vocabulary.

\begin{Not}\label{D_0 notations}
Let $D$ be an almost Dedekind domain with finite residue fields. 

\begin{itemize}

\item The domain $D$ contains a principal ideal domain $D_0$, the structure of which depends on the characteristic of $K$. If $\text{char}(K)=0$, then $D_0 \cong \Z$. If $\text{char}(K)$ is positive, then $D_0 \cong F[t]$, the univariate polynomial ring over the finite field $F$ with $\text{char}(K)$ elements.

\item Let $\mfp$ denote a nonzero prime element of $D_0$. Explicitly, if $D_0 \cong \Z$, then $\mfp$ is an integral prime; otherwise, $\mfp$ is a polynomial irreducible over $F$.

\item For each prime $P$ of $D$, let $v_P$ be the normalized $P$-adic valuation on $K$. 

\item For each $\mfp$ and each $P$ over $\mfp$, define
\begin{equation*}
\quad e_{P,\mfp} = v_P(\mfp) \quad \text{ and } \quad f_{P,\mfp} = [(D/P):(D_0/\mfp D_0)].
\end{equation*}

\item For each $\mfp$, define
\begin{equation*}
E_\mfp := \{ e_{P,\mfp} \mid P|\mfp D_0\} \quad  \text{ and } \quad F_\mfp := \{ f_{P,\mfp} \mid P|\mfp D_0\}.
\end{equation*}

\item We say that $D$ satisfies the \textit{double-boundedness condition} if, for each $\mfp$, both $E_\mfp$ and $F_\mfp$ are bounded.

\end{itemize}
\end{Not}

\begin{Def}\label{def: double-boundedness}
A domain $D$ will be called an \OurD-domain (short for ``almost Dedekind, doubly bounded'') if $D$ is an almost Dedekind domain with finite residue fields that satisfies the double-boundedness condition from Notation \ref{D_0 notations}. 
\end{Def}

By the following theorem, \OurD-domains are exactly those for which $\Int(D)$ is \Pruf{}.

\begin{Thm}\label{thm: loper}(\cite[Theorem 2.5]{LopIntD})
Let $D$ be an integral domain that is not a field. Then, $\Int(D)$ is \Pruf{} if and only if $D$ is an \OurD{}-domain.
\end{Thm}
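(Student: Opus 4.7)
The plan is to exploit the fact that a domain is \Pruf{} if and only if each of its localizations at a maximal ideal is a valuation domain, and then to understand the maximal spectrum of $\Int(D)$ in terms of primes of $D$ and points in $P$-adic completions. For the forward direction, I would first show that $\Int(D)$ being \Pruf{} forces $D$ to be integrally closed (since $D = K \cap \Int(D)$ and \Pruf{} domains are integrally closed), then one-dimensional, hence almost Dedekind. Finite residue fields are then forced by observing that if some $D/P$ were infinite, an interpolation argument would identify a suitable localization of $\Int(D)$ with the polynomial ring $D_P[X]$, which has Krull dimension $2$, contradicting the one-dimensionality of a \Pruf{} domain.

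For the double-boundedness condition, I would argue by contradiction. Suppose $F_\mfp$ is unbounded for some nonzero prime $\mfp$ of $D_0$. Using primes $P$ of $D$ over $\mfp$ with arbitrarily large residue degrees, one can construct polynomials in $\Int(D)$ whose denominators (as elements of $K[X]$) involve arbitrarily high powers of $\mfp$; two well-chosen such polynomials then generate an ideal whose invertibility would require a rational function not present in $\Int(D)$, contradicting the \Pruf{} property. An analogous construction exploiting arbitrarily large ramification indices $e_{P,\mfp}$ handles the case where $E_\mfp$ is unbounded.

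For the reverse direction, assume $D$ is an \OurD{}-domain, let $\mfM$ be a maximal ideal of $\Int(D)$, and split on whether $\mfM \cap D = (0)$ or $\mfM \cap D = P$ for some maximal ideal $P$ of $D$. The upper-to-zero primes are controlled by the fact that $K[X]$ is a PID and any localization of $\Int(D)$ at such a prime is a DVR overring of $K[X]$. For primes above $P$, I would parametrize them by elements $\alpha$ of the $P$-adic completion $\widehat{D_P}$ via $\mfM_{P,\alpha} = \{f \in \Int(D) : v_P(f(\alpha)) > 0\}$, and show that $\Int(D)_{\mfM_{P,\alpha}}$ is the valuation ring of the extension of $v_P$ to $K(X)$ defined by $f \mapsto v_P(f(\alpha))$. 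The bounds on $E_\mfp$ and $F_\mfp$ are exactly what is needed so that polynomials from $\Int(D)$ resolve elements of $\widehat{D_P}$ finely enough for the localization to genuinely pick out this valuation ring.

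The main obstacle lies in the reverse direction, specifically in translating the combinatorial double-boundedness condition into the valuation-theoretic statement that each localization of $\Int(D)$ at a maximal ideal is a valuation domain. Without uniform bounds on $e_{P,\mfp}$ and $f_{P,\mfp}$, polynomials built from elements of $D_0$ cannot uniformly approximate elements of all completions $\widehat{D_P}$ sitting over a common $\mfp$, and the localizations become too coarse to be valuation domains. Uniformizing these approximation estimates across all $\mfp$ simultaneously, and verifying that the resulting valuations account for every maximal ideal of $\Int(D)$, is where the bulk of the technical work must go.
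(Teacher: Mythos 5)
The paper does not prove this result; it is cited directly from Loper's 1998 paper (\cite{LopIntD}, Theorem~2.5), so there is no in-paper proof to compare against. Your outline is broadly consistent with the standard approach in the literature (Chabert, Loper): split maximal ideals of $\Int(D)$ into unitary and non-unitary, handle the non-unitary ones as localizations of the Pr\"ufer domain $K[X]$, and parametrize the unitary ones by elements of $P$-adic completions. In fact, the present paper's Construction~\ref{Building D} together with Theorems~\ref{thm: D is Prufer} and~\ref{thm: Lambda_n} is precisely a generalization of this machinery, so your intuition about where the technical weight sits is sound, even though you leave that weight unlifted.

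There is, however, a genuine error in your forward direction. You argue that if $D/P$ were infinite, a suitable localization of $\Int(D)$ would coincide with $D_P[X]$, ``which has Krull dimension $2$, contradicting the one-dimensionality of a Pr\"ufer domain.'' Pr\"ufer domains are not one-dimensional in general: any valuation domain of rank $n>1$ is a Pr\"ufer domain of dimension $n$. The correct reason the inclusion $\Int(D)\subseteq D_P[X]$ (which holds when $D/P$ is infinite, \cite[Proposition~I.3.4]{CaCh}) is fatal is that $D_P[X]$ is then an overring of $\Int(D)$ inside $K(X)$, every overring of a Pr\"ufer domain is Pr\"ufer, and a polynomial ring $R[X]$ over a domain $R$ that is not a field is never Pr\"ufer. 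The same observation, rather than any dimension count, is what forces $D$ to be almost Dedekind: one reduces to the local case and rules out $D_P$ being a valuation domain that is not a DVR by again showing $\Int(D_P)=D_P[X]$ (\cite[Proposition~I.3.16]{CaCh}). So while your conclusions are correct, the justification you give at this step would not survive scrutiny, and it is worth noting that exactly this correct form of the argument is used in the paper's proof of Theorem~\ref{thm: IntK(A) not Prufer}.
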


Thus, in order for $\Int_K(A)$ to be \Pruf{}, it is necessary that $D$ be an \OurD{}-domain. Since \Pruf{} domains are integrally closed, we can obtain further information on when $\Int_K(A)$ is \Pruf{} by studying the integral closure of $\Int_K(A)$. For this, the elements of $A$ that are integral over $D$ are prominent. The fact that $A$ is finitely generated as a $D$-module implies that $B = A \otimes_D K$ is finitely generated as a $K$-module. Hence, each element of $B$ is algebraic over $K$.

\begin{Def}\label{def: integral elements}
We denote by $\oD$ the absolute integral closure of $D$ in a fixed algebraic closure $\oK$ of $K$.  For an element $b\in B$, we let $\mu_b\in K[X]$ be the minimal monic polynomial of $b$ over $K$ and let $\Omega_b$ be the set of roots of $\mu_b$ inside $\oK$. When $S \subseteq B$, we set $\Omega_S=\bigcup_{b\in S}\Omega_b$. We say that $b$ is \textit{integral over $D$} if $b$ solves a monic polynomial over $D$, we let $A':=\{b\in B\mid b \text{ is integral over } D\}$, and we say that $A$ is \textit{integrally closed} if $A=A'$.
\end{Def}

Our assumptions on $A$ imply that $A \subseteq A'$. However, there is no guarantee that $A$ is equal to $A'$, and in fact $A'$ may fail to be a ring because $A$ is allowed to be noncommutative (this behavior is similar to that of $D$-orders in $K$-algebras, cf.\ \cite[Section 8]{Reiner}). Nevertheless, in some cases the set $A'$ affords a description of the integral closure of $\Int_K(A)$. 

\begin{Def}\label{def: Int_K(A,A')}
Let $\Int_K(A,A') := \{f \in K[X] \mid f(A) \subseteq A'\}$. For each $n \geq 1$, let
\begin{equation*}
\Lambda_n = \{\alpha \in \oK \mid \alpha \text{ solves a monic polynomial in $D[X]$ of degree $n$}\}.
\end{equation*}
For $\msE \subseteq \Lambda_n$, let $\Int_K(\msE, \Lambda_n) := \{f \in K[X] \mid f(\msE) \subseteq \Lambda_n\}$.
\end{Def}

\begin{Thm}\label{thm: Int_K(A,A')}(\cite[Theorem 10 \& 13]{PerWer})
Let $D$ be an integrally closed domain. Let $n$ be the vector space dimension of $B$ over $K$. Then, $\Int_K(A,A')$ is an integrally closed ring, and $\Int_K(A,A') = \Int_K(\Omega_A, \Lambda_n)$. Moreover, if all residue rings (by nonzero ideals) of $D$ are finite, then $\Int_K(A,A')$ is equal to the integral closure of $\Int_K(A)$.
\end{Thm}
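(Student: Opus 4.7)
The plan is to prove the three assertions in sequence. For the identification $\Int_K(A, A') = \Int_K(\Omega_A, \Lambda_n)$, the key fact is that, since $D$ is integrally closed, for any $b \in B$ the condition $b \in A'$ is equivalent to $\mu_b \in D[X]$, and (because $\deg \mu_b \le n$) this is in turn equivalent to $\Omega_b \subseteq \Lambda_n$. For $a \in A$ and $f \in K[X]$, each $\alpha \in \Omega_a$ determines a $K$-algebra map $K[a] \to K(\alpha)$ sending $a \mapsto \alpha$, under which $f(a) \mapsto f(\alpha)$; applying this to $\mu_{f(a)}(f(a)) = 0$ gives $\mu_{f(a)}(f(\alpha)) = 0$. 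Thus $f(a) \in A'$ iff $\mu_{f(a)} \in D[Y]$ iff every $f(\alpha)$ with $\alpha \in \Omega_a$ is integral over $D$, equivalently, in $\Lambda_n$. Quantifying over $a \in A$ gives the claimed equality.

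For integral closedness, I would work with $\Int_K(\Omega_A, \Lambda_n)$. Since each $\alpha \in \Omega_A$ satisfies $[K(\alpha):K] \le n$, the value $f(\alpha) \in K(\alpha)$ lies in $\Lambda_n$ exactly when it is integral over $D$; hence $\Int_K(\Omega_A, \Lambda_n) = \bigcap_{\alpha \in \Omega_A} \{f \in K[X] : f(\alpha) \in \oD\}$. Each constituent is closed under addition, under multiplication, and under integrality within $K[X]$, because $\oD$ is integrally closed and integrality is transitive; the intersection inherits all three properties.

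The substantive content is the last claim. One inclusion is immediate from $A \subseteq A'$: the integral closure of $\Int_K(A)$ is contained in $\Int_K(A, A')$ by the previous step. For the other direction I need every $f \in \Int_K(A, A')$ to be integral over $\Int_K(A)$. I would try to produce, for each such $f$, an explicit monic polynomial $\Phi(Y) \in \Int_K(A)[Y]$ with $\Phi(f) = 0$ in $K[X]$. A natural candidate has $Y$-coefficients built from the elementary symmetric functions of the values $\{f(\alpha) : \alpha \in \Omega_a\}$ as $a$ ranges over $A$: by the first step these symmetric functions land in $D$, and a Cayley--Hamilton-style computation in the $n$-dimensional $K$-algebra $B$ shows the resulting expression annihilates $f$.

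The hard point is that the symmetric functions are a priori only set-theoretic functions $A \to D$, whereas integrality of $f$ over $\Int_K(A)$ demands that the coefficients of $\Phi$ be genuine \emph{polynomials} in $\Int_K(A)$. This is where the assumption that all nonzero residue rings of $D$ are finite is essential: working locally at each nonzero prime ideal $\mfp$ of $D$ and reducing modulo powers $\mfp^k$, the finiteness of $D/\mfp^k$ should enable a polynomial-approximation argument---in the spirit of how integer-valued polynomials are built from their values on finite sets---that produces honest polynomial representatives realizing the desired integer-valued behavior on all of $A$.
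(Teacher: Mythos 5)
First, a point of comparison: the paper does not actually prove Theorem~\ref{thm: Int_K(A,A')} --- it is imported verbatim from \cite{PerWer} --- so your attempt can only be measured against that external source and on its own merits. Your first two steps are essentially sound. The one slip is in the identification $\Int_K(A,A')=\Int_K(\Omega_A,\Lambda_n)$: the evaluation maps $K[a]\to K(\alpha)$ only give $f(\Omega_a)\subseteq\Omega_{f(a)}$, which yields $\Int_K(A,A')\subseteq\Int_K(\Omega_A,\Lambda_n)$; for the reverse inclusion you need $\Omega_{f(a)}\subseteq f(\Omega_a)$, i.e.\ the full spectral mapping property $\Omega_{f(a)}=f(\Omega_a)$ obtained by embedding $B$ into $M_n(K)$ (this is precisely Lemma~\ref{lem: eigenvalues}(2) of the present paper). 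The integral-closedness argument via $\bigcap_{\alpha\in\Omega_A}\{f\in K[X]\mid f(\alpha)\in\oD\}$ is correct, once one adds the remark that an element of $K(X)$ integral over this ring is integral over $K[X]$ and hence already lies in $K[X]$.

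The third claim is where the substance of the theorem lies, and there your plan has a structural flaw, not merely the unfinished approximation step you acknowledge. For $f$ to be integral over $\Int_K(A)$ you need a monic $\Phi(Y)=Y^m+g_{m-1}(X)Y^{m-1}+\cdots+g_0(X)$ with each $g_j\in\Int_K(A)$ and $\Phi(f)=0$ identically in $K[X]$. Your candidate requires each $g_j$ to take, at every $a\in A$, the value $(-1)^{m-j}e_{m-j}(\{f(\alpha)\}_{\alpha\in\Omega_a})$, which is a \emph{scalar} in $D$; hence $g_j(A)\subseteq D\subseteq K$. For most algebras this forces $g_j$ to be constant, independently of any finiteness of residue rings. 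For instance, take $D=\Z$ and $A=\Z[2\sqrt{-1}]$, so that $A'=\Z[\sqrt{-1}]$ and $\Int_\Q(A,A')$ properly contains $\Int_\Q(A)$: if $g\in\Q[X]$ satisfies $g(m+2\sqrt{-1})\in\Q$ for all $m\in\Z$, then $g(W+2\sqrt{-1})-g(W-2\sqrt{-1})$ vanishes at every integer $W=m$, hence is the zero polynomial, hence $g$ is periodic and therefore constant; but constant coefficients in $\Phi$ would make $f$ itself integral over $D$, which fails for every nonconstant $f$. So the interpolation you hope the finite-residue-ring hypothesis will deliver does not exist: the obstruction is algebraic, not approximation-theoretic. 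The coefficients of a genuine monic relation $\Phi(f)=0$ in $K[X]$ are symmetric functions of the roots of $\Phi$ in an algebraic closure of $K(X)$ (one of which is $f$), not of the algebraic numbers $f(\alpha)$, and producing such a $\Phi$ with coefficients in $\Int_K(A)$ --- or otherwise establishing integrality --- is exactly the content of \cite{PerWer} that your sketch does not supply.
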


Note that if $D$ is a Dedekind domain with finite residue fields, then Theorem \ref{thm: Int_K(A,A')} applies and $\Int_K(A,A')$ is the integral closure of $\Int_K(A)$. It is not known whether the same is true when $D$ is a general \OurD{}-domain (i.e.\ if $D$ is not Noetherian).

We now state our main result, which says in part that $\Int_K(A)$ is \Pruf{} if and only if $A=A'$. The theorem also provides structural descriptions of $A$ and $B$ that are necessary and sufficient for $\Int_K(A)$ to be \Pruf{}. Recall that for a $D$-module $M$, $M_P$ denotes the localization of $M$ at $D \setminus P$. Also, recall that a ring $R$ is semiprimitive when the Jacobson radical of $R$ is $\{0\}$.

\begin{Thm}\label{thm: main}
Let $D$ be an \OurD-domain. Let $A$ be a $D$-algebra that is torsion-free and finitely generated as a $D$-module, and such that $A \cap K = D$. Let $B = A \otimes_D K$. 
\begin{enumerate}[(1)]
\item The following are equivalent.
\begin{enumerate}[(i)]
\item $\Int_K(A)$ is \Pruf{}.
\item $\Int_K(A) = \Int_K(A,A')$.
\item $A = A'$.
\item For all $a \in A$, $A \cap K[a]$ is integrally closed in $K[a]$.
\item For some $t \geq 1$, $B \cong \prod_{i=1}^t \mcD_i$, where each $\mcD_i$ is a division ring over $K$, and $A \cong \prod_{i=1}^t A_i$, where each $A_i$ is an integrally closed domain (not necessarily commutative) in $\mcD_i$.
\item For each prime ideal $P$ of $D$, $\Int_K(A)_P$ is \Pruf{}.
\item For each prime ideal $P$ of $D$, $\Int_K(A_P)$ is \Pruf{}.
\item For each prime ideal $P$ of $D$, $\Int_K(A_P)$ is integrally closed.
\end{enumerate}

\item Assume $D$ is semiprimitive. Then, $\Int_K(A)$ is \Pruf{} if and only if $A \cong \prod_{i=1}^t A_i$, where $A_i$ is the integral closure of $D$ in a finite field extension $F_i/K$. In this case, each $A_i$ is an \OurD{}-domain, and $A$ is a commutative ring.
\end{enumerate}
\end{Thm}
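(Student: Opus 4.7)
The plan is to derive part (2) from part (1). The product decomposition in condition (1)(v) already supplies the algebra-level skeleton, and the only content specific to part (2) is showing that semiprimitivity of $D$ forces each division-ring factor of $B$ to be commutative; once that is known, the remaining assertions follow from part (1) together with standard facts about integral closures in finite field extensions.

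For the reverse implication, suppose $A \cong \prod_{i=1}^t A_i$ with each $A_i$ the integral closure of $D$ in a finite field extension $F_i/K$. Then $B \cong \prod_i F_i$ is a product of fields (in particular of division rings), each $A_i$ is a commutative integrally closed domain in $F_i$, and condition (1)(v) is met, so part (1) yields that $\Int_K(A)$ is \Pruf{}. That each $A_i$ is itself an \OurD{}-domain is routine: for a prime $Q$ of $A_i$ lying over $P = Q \cap D$, $(A_i)_Q$ is a DVR with a finite residue field (a finite extension of $D/P$), so $A_i$ is almost Dedekind with finite residue fields; the double-boundedness condition is preserved because for each nonzero prime $\mfp$ of $D_0$ one has $e_{Q,\mfp} = e(Q|P)\, e_{P,\mfp}$ and $f_{Q,\mfp} = f(Q|P)\, f_{P,\mfp}$, with $e(Q|P), f(Q|P)$ bounded by $[F_i:K]$ and $e_{P,\mfp}, f_{P,\mfp}$ bounded by hypothesis on $D$.

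For the forward implication, assume $\Int_K(A)$ is \Pruf{}. Part (1)(v) yields $B \cong \prod_i \mcD_i$ and $A \cong \prod_i A_i$ with $A_i$ integrally closed in the division ring $\mcD_i$. Using $A = A'$ from (1)(iii), each $A_i$ consists exactly of the elements of $\mcD_i$ integral over $D$, so $A_i$ is the integral closure of $D$ in $\mcD_i$; moreover $A_i \cap K = D$ since $D$ is integrally closed in $K$. It remains only to show each $\mcD_i$ is commutative. Assume for contradiction that $\mcD := \mcD_i$ is noncommutative and set $A^* := A_i$. Since $\Int_K(A_P) = \bigcap_j \Int_K((A_j)_P)$ at any prime $P$ of $D$, the ring $\Int_K(A^*_P)$ is an overring of the \Pruf{} domain $\Int_K(A_P)$ (from (1)(vii)), hence itself \Pruf{}, and in particular integrally closed, for every prime $P$. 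The strategy is to find a prime $P$ at which the $D_P$-order $A^*_P$ is $D_P$-algebra isomorphic to a matrix algebra $M_n(D_P)$ with $n \geq 2$: at such a prime, \cite[Corollary 3.4]{PerWerPropInt} says $\Int_K(A^*_P) \cong \Int_K(M_n(D_P))$ is not integrally closed, the desired contradiction.

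The principal obstacle is manufacturing such a splitting prime from the semiprimitivity hypothesis. The vanishing $\bigcap_P P = 0$ should, via the theory of maximal orders of $A^*$ inside $\mcD$, guarantee the existence of a prime $P$ where $\mcD$ splits locally and $A^*_P$ becomes (up to $D_P$-algebra isomorphism) a matrix order. Semiprimitivity is essential: in a purely local setting (say $D$ a DVR, hence not semiprimitive), the division algebra may fail to split at the unique prime and the argument breaks down. Making the splitting step precise for general \OurD{} semiprimitive $D$—beyond the classical number-field or function-field case where Brauer--Hasse--Noether applies directly—and transporting the resulting local matrix-algebra obstruction back to the hypothesis of \cite[Corollary 3.4]{PerWerPropInt} constitutes the technical heart of the argument.
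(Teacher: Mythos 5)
Your proposal does not address part (1) of the theorem at all; it assumes it wholesale, so even if the argument for part (2) were correct, the overall proof would be incomplete. The paper proves part (1) through a web of results: (i)$\Rightarrow$(v) via Theorem \ref{thm: A is a direct product}, (v)$\Rightarrow$(iii)$\Rightarrow$(ii) directly, (ii)$\Rightarrow$(i) via Corollary \ref{cor: Lambda_n is Prufer}(2), (iii)$\Leftrightarrow$(iv) via Lemma \ref{pointwise integrally closed}, and the localization conditions (vi)--(viii) via Lemma \ref{lem: Prufer local property}.

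The forward direction of your argument for part (2) contains a conceptual error, not just the acknowledged gap. You aim to find a prime $P$ at which $A^*_P$ is $D_P$-algebra isomorphic to $M_n(D_P)$ with $n\geq 2$ and then invoke \cite[Corollary 3.4]{PerWerPropInt}. But $A^*_P = (D\setminus P)^{-1}A_i$ is a subring of the division ring $\mcD_i$, hence is itself a domain with no nontrivial idempotents, whereas $M_n(D_P)$ with $n\geq 2$ has many. No such isomorphism can exist. What \emph{can} be a matrix ring over a finite field is a \emph{residue ring} $A/\mfm$ of a domain, and this is exactly what the paper exploits. Its route is elementary: Theorem \ref{thm: IntK(A) not Prufer} (built on Lemma \ref{lem: a not in pA}, which lifts orthogonal idempotents from $A/\mfm\cong M_n(F)$ and produces an $a\in A$ with $a^2\in \pi^2 A$, $a\notin\pi A$, obstructing integral closure of $\Int_K(\{a\},A)$) shows that every simple residue ring of $A$ must be a finite field (Corollary \ref{cor: IntK(A) not Prufer v1}). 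Proposition \ref{prop: when A is a direct product} shows semiprimitivity of $D$ forces $\bigcap_{\mfm\in\Max(A)}\mfm=\{0\}$. Since each $A/\mfm$ is a finite field, $ab-ba$ lies in every $\mfm$, hence equals $0$, and $A$ is commutative. This avoids all Brauer-group and splitting-field machinery, which is essential since \OurD{}-domains need not be Noetherian and Brauer--Hasse--Noether does not apply in that generality (you flag this but do not resolve it). Your reverse direction is fine given part (1): the product of integral closures satisfies (1)(v), and the multiplicativity of ramification indices and residue degrees transfers double-boundedness to each $A_i$, matching the spirit of the paper's Theorem \ref{thm: commutative converse}.
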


Each condition in Theorem \ref{thm: main}(1) implies that $\Int_K(A)$ is integrally closed. In analogy with condition (viii), we suspect that $\Int_K(A)$ is \Pruf{} if and only if $\Int_K(A)$ is integrally closed; however, we have only been able to prove this with additional assumptions on $D$ or $\Int_K(A)$.  

\begin{Conj}\label{conj: main}
Let $D$ and $A$ be as in the statement of Theorem \ref{thm: main}. If $\Int_K(A)$ is integrally closed, then $\Int_K(A)$ is \Pruf{}.
\end{Conj}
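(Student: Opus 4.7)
The plan is to reduce Conjecture \ref{conj: main} to condition (viii) of Theorem \ref{thm: main}(1). Since conditions (i)--(viii) are already established as equivalent, it suffices to deduce, from the hypothesis that $\Int_K(A)$ is integrally closed, that $\Int_K(A_P)$ is integrally closed for every prime ideal $P$ of $D$. In other words, the entire task is a descent step from global integral-closedness to local integral-closedness. A useful warm-up to calibrate expectations is the Dedekind case: when $D$ is Dedekind, every nonzero ideal has a finite residue ring, so Theorem \ref{thm: Int_K(A,A')} identifies $\Int_K(A,A')$ with the integral closure of $\Int_K(A)$; the hypothesis then directly forces $\Int_K(A)=\Int_K(A,A')$, which is condition (ii), hence (i).

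For a general \OurD-domain $D$, the natural route to the descent step is a two-part argument. First, establish the inclusion $\Int_K(A)_P\subseteq \Int_K(A_P)$, which should follow from the compatibility of polynomial evaluation with localization at $D\setminus P$: a polynomial with $K$-coefficients that sends $A$ into $A$ extends, after clearing denominators from $D\setminus P$, to a polynomial sending $A_P$ into $A_P$. Second, prove that $\Int_K(A_P)$ is integral over $\Int_K(A)_P$. Because $\Int_K(A)$ is a commutative domain (sitting inside $K[X]$), localization preserves its integral-closedness, so $\Int_K(A)_P$ is integrally closed in the common quotient field; combined with the integrality, this forces $\Int_K(A_P)=\Int_K(A)_P$, proving (viii) at $P$. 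A parallel global route bypasses localization: prove directly that $\Int_K(A,A')$ is integral over $\Int_K(A)$ for every \OurD-domain (that is, extend Theorem \ref{thm: Int_K(A,A')} beyond the finite-residue-ring hypothesis). The hypothesis would then immediately yield $\Int_K(A)=\Int_K(A,A')$, and (ii)$\Rightarrow$(i) closes the argument.

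The main obstacle, on either route, is the same: proving integrality of $\Int_K(A_P)$ (respectively $\Int_K(A,A')$) over $\Int_K(A)_P$ (respectively $\Int_K(A)$) without the Noetherian/finite-residue-ring hypothesis. In the Noetherian setting, a compactness argument produces a single monic polynomial over $\Int_K(A)$ that simultaneously witnesses integrality of $f(a)$ for every $a\in A$; a general \OurD-domain can have an uncountable spectrum, and the only uniformity left is the double-boundedness of the ramification indices and residue-field degrees $\{e_{P,\mfp}\}$ and $\{f_{P,\mfp}\}$. Any proof must therefore exploit the double-boundedness condition itself to build a global integrality relation. This is where I expect the bulk of the technical work to sit, which is consistent with the authors' remark that they have established the implication only under additional hypotheses on $D$ or $\Int_K(A)$; a sensible intermediate target is the conjecture under the auxiliary assumption that $\Int_K(A,A')$ is already known to coincide with the integral closure of $\Int_K(A)$, reducing the conjecture to a tautology and isolating the remaining difficulty as an extension of Theorem \ref{thm: Int_K(A,A')}.
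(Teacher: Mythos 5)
Your proposal correctly recognizes that this statement is a conjecture rather than a proven theorem, and your analysis matches the paper's treatment closely. The paper does not prove Conjecture~\ref{conj: main} in general; it proves Theorem~\ref{thm: conjecture special cases}, which adds the hypothesis that $\Int_K(A)_P = \Int_K(A_P)$ for all primes $P$, noting that this hypothesis is automatic when $D$ is Dedekind by \cite[Lemma 3.2]{PerWerNontrivial}. Your Dedekind warm-up is exactly the paper's observation: Theorem~\ref{thm: Int_K(A,A')} identifies $\Int_K(A,A')$ with the integral closure of $\Int_K(A)$ when residue rings are finite, so the hypothesis forces condition (ii) of Theorem~\ref{thm: main}(1). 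Your first step in the local route, the inclusion $\Int_K(A)_P \subseteq \Int_K(A_P)$, is the paper's Lemma~\ref{lem: Rush trick}, proved by a Rush-style induction (your heuristic ``clear denominators'' reasoning is informal, but the inclusion is genuinely established there).

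Where your proposal diverges slightly from the paper: you propose to \emph{prove} the integrality of $\Int_K(A_P)$ over $\Int_K(A)_P$ (and then use integral-closedness of $\Int_K(A)_P$ to force equality), whereas Theorem~\ref{thm: conjecture special cases} instead \emph{assumes} the equality $\Int_K(A)_P = \Int_K(A_P)$ as a hypothesis. Your variant would also suffice, and your global route (extending Theorem~\ref{thm: Int_K(A,A')} past the finite-residue-ring case) is precisely what the paper flags as unknown after stating Theorem~\ref{thm: Int_K(A,A')}. You correctly identify that both routes bottom out at the same obstacle---an integrality statement requiring uniformity that the Noetherian argument supplies but that the double-boundedness condition alone has not yet been shown to supply---and that this is the open content of the conjecture. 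In short, the proposal is an accurate account of what is known, faithfully reproducing the reduction, the Dedekind case, and the location of the open gap; it does not, and does not claim to, close the gap.
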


The full resolution of Conjecture \ref{conj: main} remains an open problem. We can show that the conjecture is true if $D$ is an \OurD{}-domain that is also a Dedekind domain or, more generally, if $\Int_K(A)$ is well-behaved with respect to localization at primes of $D$:

\begin{Thm}\label{thm: conjecture special cases}
Let $D$ and $A$ be as in the statement of Theorem \ref{thm: main}. If $\Int_K(A)_P = \Int_K(A_P)$ for all primes $P \subset D$, then Conjecture \ref{conj: main} is true.
\end{Thm}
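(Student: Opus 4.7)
The plan is to reduce Conjecture \ref{conj: main}, under the stated hypothesis, to condition (viii) of Theorem \ref{thm: main}(1), and then invoke the equivalence (viii) $\Leftrightarrow$ (i) from that theorem. Specifically, I would show that if $\Int_K(A)$ is integrally closed, then $\Int_K(A_P)$ is integrally closed for every prime $P$ of $D$, and conclude that $\Int_K(A)$ is \Pruf{}.

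I would begin by observing that $\Int_K(A)$ contains $D[X]$ and is contained in $K[X]$, so its field of fractions (as a domain) is $K(X)$. Assuming $\Int_K(A)$ is integrally closed in $K(X)$, I would apply the standard commutative-algebra fact that integral closure in a fixed ambient field is preserved under localization at any multiplicative subset: for each prime ideal $P$ of $D$, the localization $\Int_K(A)_P$ at the multiplicative set $D\setminus P$ is again integrally closed in $K(X)$. The hypothesis $\Int_K(A)_P = \Int_K(A_P)$ then transfers this integral closedness to $\Int_K(A_P)$ for every prime $P$, yielding condition (viii) of Theorem \ref{thm: main}(1). One application of Theorem \ref{thm: main}(1) now gives condition (i), i.e., that $\Int_K(A)$ is \Pruf{}.

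In this plan there is essentially no serious obstacle; the substantive content has already been packed into Theorem \ref{thm: main}(1). The point of the hypothesis $\Int_K(A)_P = \Int_K(A_P)$ is exactly to sidestep the open question, raised just after Theorem \ref{thm: Int_K(A,A')}, of whether the integral closure of $\Int_K(A)$ coincides with $\Int_K(A,A')$ in the non-Noetherian \OurD{}-setting: by localizing at each $P$ of $D$, one effectively works over the DVR $D_P$ (where Theorem \ref{thm: Int_K(A,A')} applies cleanly), and the hypothesis is exactly what is needed to pull the resulting local integral-closedness back to $\Int_K(A_P)$. The mild step to check is the claim that localization preserves integral closure in a common field of fractions, but this is routine.
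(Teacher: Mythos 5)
Your proposal is correct and takes essentially the same route as the paper. You localize the hypothesis of integral closedness at each prime $P$ of $D$ (using the standard fact that integral closedness in a fixed fraction field passes to localizations), transfer it to $\Int_K(A_P)$ via the hypothesis $\Int_K(A)_P = \Int_K(A_P)$, and then invoke the equivalence (viii)~$\Leftrightarrow$~(i) of Theorem~\ref{thm: main}(1); the paper does precisely this, only it unwinds that equivalence by hand, quoting Theorem~\ref{thm: Int_K(A,A')} (valid over the DVR $D_P$) to identify $\Int_K(A_P)$ with its integral closure $\Int_K(A_P, A_P')$, then Corollary~\ref{cor: Lambda_n is Prufer}(2) and Lemma~\ref{lem: Prufer local property} to conclude Pr\"uferness.
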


In particular, Theorem \ref{thm: conjecture special cases} holds when $D$ is Dedekind, because in that case $\Int_K(A)_P = \Int_K(A_P)$ by \cite[Lemma 3.2]{PerWerNontrivial}.

The paper is organized as follows. In Section \ref{sec: finite subsets}, we classify the finite subsets $S \subseteq A$ such that $\Int_K(S,A)$ is \Pruf{}. This gives us some control on the overrings of $\Int_K(A)$, and these are used in Section \ref{sec: necessary} where we derive a number of necessary conditions on $A$ in order for $\Int_K(A)$ to be \Pruf{}. In Theorem \ref{thm: A is a direct product}, we prove that if $\Int_K(A)$ is \Pruf{}, then $A$ and $B$ have the structure specified in condition (v) of Theorem \ref{thm: main}(1). Moreover, we prove Theorem \ref{thm: main}(2) (see Corollary \ref{cor: D semiprimitive}), which shows that if $D$ is semiprimitive, then $A$ must be commutative. If $D$ is not semiprimitive, then it is possible that $\Int_K(A)$ is \Pruf{} and $A$ is noncommutative; an example of such an algebra $A$ is given in Section \ref{sec: quaternions} at the end of the paper. We complete the proof of Theorem \ref{thm: main} in Section \ref{sec: sufficient} by constructing a family of \Pruf{} domains contained in $K[X]$. We show that $\Int_K(A,A')$ must contain one of these \Pruf{} domains, and hence is itself \Pruf{} (Corollary \ref{cor: Lambda_n is Prufer}). Under any of conditions (iii) -- (viii) in Theorem \ref{thm: main}(1), we demonstrate that $\Int_K(A) = \Int_K(A,A')$, and conclude $\Int_K(A)$ is \Pruf{}. Section \ref{sec: sufficient} closes with a discussion of Conjecture \ref{conj: main} and Theorem \ref{thm: conjecture special cases}.

\section{Finite subsets of $A$}\label{sec: finite subsets}

Our first goal is to determine when $\Int_K(S,A)$ is \Pruf{} for a finite subset $S \subseteq A$. Since \Pruf{} domains are integrally closed, we begin by studying the integral closure of $\Int_K(S,A)$. In \cite{PerFinite}, subsets of $\oD$ were used to describe the integral closure of $\Int_K(S,A)$. Recall from Definition \ref{def: integral elements} that for $b \in B$ and $S \subseteq B$, we let $\Omega_b \subseteq \oK$ be the set of roots of $\mu_b$ in $\oK$, and $\Omega_S = \bigcup_{b \in S} \Omega_b$.

\begin{Thm}\label{integral closure}
Let $D$ be an integrally closed domain.
\begin{enumerate}[(1)]
\item (\cite[Proposition 4.2]{PerFinite}) Let $\Omega$ be a finite subset of $\oD$. Then, $\Int_K(\Omega, \oD)$ is \Pruf{} if and only if $D$ is \Pruf{}.

\item (\cite[Corollary 1.5]{PerFinite}) Let $S$ be a finite subset of $A$. Then, the integral closure of $\Int_K(S,A)$ is equal to $\Int_K(\Omega_S,\oD)=\{f\in K[X] \mid f(\Omega_S)\subseteq\oD\}$. Thus, $\Int_K(S,A)$ is \Pruf{} if and only if $D$ is \Pruf{} and $\Int_K(S,A)$ is integrally closed.
\end{enumerate}
\end{Thm}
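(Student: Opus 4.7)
The plan is to treat part (2)---the description of the integral closure---as the main content, and to deduce both statements from it together with Loper's Theorem~\ref{thm: loper}. For part (1), writing $\Omega=\{\alpha_1,\ldots,\alpha_n\}$ and letting $D_i$ be the integral closure of $D$ in $K(\alpha_i)$, one has $\Int_K(\Omega,\oD)=\bigcap_{i=1}^n \Int_K(\{\alpha_i\},D_i)$, since $f(\alpha_i)\in K(\alpha_i)\cap\oD=D_i$. If $D$ is \Pruf{}, each $D_i$ is also an \OurD{}-domain, so $\Int(D_i)$ is \Pruf{} by Theorem~\ref{thm: loper}; a pullback description of $\Int_K(\{\alpha_i\},D_i)$ via the evaluation $K[X]\to K(\alpha_i)$ over $D_i\hookrightarrow K(\alpha_i)$ transfers the \Pruf{} property, and a finite intersection of \Pruf{} overrings of $D[X]$ is again \Pruf{}. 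Conversely, if $\Int_K(\Omega,\oD)$ is \Pruf{}, a valuation localization at a prime lying over any $P\subset D$ restricts to a valuation overring of $D_P$ with quotient field $K$, forcing $D$ to be \Pruf{}.

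For part (2), the containment $\Int_K(S,A)\subseteq \Int_K(\Omega_S,\oD)$ is direct: given $f\in\Int_K(S,A)$ and $b\in S$, $f(b)\in A$ is integral over $D$, so every $K$-conjugate $f(\alpha)$ with $\alpha\in\Omega_b$ is integral as well and therefore lies in $\oD$. For the reverse containment, I would construct an explicit monic integral equation for each $f\in\Int_K(\Omega_S,\oD)$. Set
\[
N(Y):=\prod_{b\in S}\prod_{\alpha\in\Omega_b}\bigl(Y-f(\alpha)\bigr);
\]
by $K$-Galois invariance (since $\Omega_S$ is a union of Galois orbits) and the hypothesis $f(\alpha)\in\oD$, the coefficients of $N(Y)$ lie in $\oD\cap K=D$, so $N(Y)\in D[Y]\subseteq \Int_K(S,A)[Y]$ is monic in $Y$. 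A Cayley--Hamilton-style computation, using the embeddings of the commutative subring $K[b]\subseteq B$ into $\oK$ sending $b\mapsto\alpha$, yields $N(f(b))=0$ in $B$ for every $b\in S$; hence $p(X):=N(f(X))\in K[X]$ evaluates to $0\in A$ at each $b\in S$, so $p(X)\in\Int_K(S,A)$. Then $\tilde N(Y):=N(Y)-p(X)$ lies in $\Int_K(S,A)[Y]$, remains monic in $Y$, and satisfies $\tilde N(f(X))=0$, exhibiting $f$ as integral over $\Int_K(S,A)$. Integral closedness of $\Int_K(\Omega_S,\oD)$ is immediate (as an intersection of integrally closed rings, or via part (1)), which completes the identification of the integral closure; the \Pruf{} characterization follows by combining with part (1).

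The most delicate step is the construction of $\tilde N$. The natural candidate $N(Y)\in D[Y]$ has the right coefficient ring but is not by itself an integral equation for $f$; the correction $\tilde N=N(Y)-N(f(X))$ is only legitimate once one verifies that $N(f(X))$ actually lies in $\Int_K(S,A)$, which reduces to the vanishing identity $N(f(b))=0\in B$ for every $b\in S$. This is clean when each $\mu_b$ is separable, since $K[b]$ then decomposes as a product of fields by the Chinese remainder theorem and the identity follows componentwise, but the inseparable case and the need to argue uniformly across all $b\in S$ simultaneously require additional care, as does a careful interpretation of integrality inside $B$ when $A$ is noncommutative.
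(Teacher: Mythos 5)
The paper does not prove this theorem; it is cited verbatim from \cite{PerFinite} (Proposition~4.2 and Corollary~1.5), so there is no in-paper proof to compare against. Judged on its own terms, your argument has two genuine problems.

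In part~(1), the step ``if $D$ is \Pruf{}, each $D_i$ is also an \OurD{}-domain, so $\Int(D_i)$ is \Pruf{} by Theorem~\ref{thm: loper}'' is wrong. Being \Pruf{} is strictly weaker than being \OurD{} (almost Dedekind with finite residue fields plus double boundedness); a Dedekind domain with infinite residue fields, or a non-Noetherian \Pruf{} domain that is not almost Dedekind, is \Pruf{} without being \OurD{}, and in such cases $\Int(D_i)=D_i[X]$ is not \Pruf{}. The whole content of part~(1) is that for a \emph{finite} $\Omega$ the \Pruf{}-ness of $\Int_K(\Omega,\oD)$ is governed by \Pruf{}-ness of $D$ alone, bypassing the ADDB hypotheses required for $\Int(D)$ to be \Pruf{}; your route through Loper's theorem reintroduces exactly the hypotheses that are not available. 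Moreover, even granting $\Int(D_i)$ \Pruf{}, there is no implication from it to $\Int_K(\{\alpha_i\},D_i)$ being \Pruf{}: the former lives in $K(\alpha_i)[X]$ and the latter in $K[X]$. What you actually want is a pullback lemma for $\phi^{-1}(D_i)$ along the evaluation $\phi\colon K[X]\to K(\alpha_i)$ at $\alpha_i$, where $K[X]$ is a PID, $D_i$ is \Pruf{} with quotient field $K(\alpha_i)$ (this follows from $D$ \Pruf{} and integrality), together with a criterion for a finite intersection of such overrings of $D[X]$ to remain \Pruf{}. The converse direction is also stated too loosely to check: restricting a valuation overring of $\Int_K(\Omega,\oD)$ to $K$ produces \emph{some} valuation ring containing $D_P$, not obviously $D_P$ itself, and one must also justify the existence of a maximal ideal of $\Int_K(\Omega,\oD)$ contracting to a given $P$.

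In part~(2), the construction $N(Y)=\prod_{b\in S}\prod_{\alpha\in\Omega_b}(Y-f(\alpha))$ only yields $N(f(b))=0$ when $\mu_b$ is squarefree. If $\mu_b=\prod_i p_i(X)^{e_i}$ with some $e_i\ge 2$, then $K[b]\cong\prod_i K[X]/(p_i^{e_i})$ has nilpotents, there are no embeddings of $K[b]$ into $\oK$, and $N(f(b))$ lies in the nilradical but need not vanish. Concretely, if $b\ne 0$, $b^2=0$, $f=X$, then $\Omega_b=\{0\}$, $N(Y)=Y$, and $N(f(b))=b\ne 0$. Since the hypotheses on $A$ permit nonzero nilpotents (e.g.\ $A=D[Y]/(Y^2)$), this is a real gap, not a side case. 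The fix is to replace $N$ by $M(Y):=N(Y)^m$ where $m\ge\max_{b\in S}\max_i e_i(b)$. Then in each factor $K[X]/(p_i^{e_i})$ the image of $N(f(b))$ lies in the maximal ideal, so $M(f(b))=N(f(b))^m=0$; hence $p(X):=M(f(X))\in\Int_K(S,A)$, the polynomial $\tilde N(Y):=M(Y)-p(X)$ is monic with coefficients in $\Int_K(S,A)$, and $\tilde N(f(X))=0$. With that correction your part~(2) argument goes through; as written it does not.
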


It remains to determine conditions on $S$ and $A$ under which $\Int_K(S,A)$ is integrally closed. We begin by considering the case where $S =\{a\}$ for some $a \in A$.  Even when $S$ is a singleton set, it is not always apparent whether or not $\Int_K(S,A)$ will be integrally closed. Consider the following two examples involving $2 \times 2$ matrices over $\Z$.

\begin{Ex}\label{matrix ex 1} Let $D = \Z$, $A = M_2(\Z)$, and $\mu(X) = X^2-2X-4$, with roots $1 \pm \sqrt{5}$. Let $\theta = \tfrac{1+\sqrt{5}}{2}$. Let $a = \smat{0}{4}{1}{2}$ and $b=\smat{0}{2}{2}{2}$, both of which are matrices in $A$ with minimal polynomial $\mu$. Since $a$ and $b$ share the same minimal polynomial, they are conjugate over $GL(2,\Q)$, although one may check that they are not conjugate over $GL(2,\Z)$. We will show that $\Int_\Q(\{a\},A)$ is not integrally closed, but $\Int_\Q(\{b\},A)$ is integrally closed.

Let $\mcR = \Int_\Q(\{1 \pm \sqrt{5}\}, \Z[\theta])$. Then, $\mcR$ is the integral closure of both $\Int_\Q(\{a\},A)$ and $\Int_\Q(\{b\},A)$ by Theorem \ref{integral closure}. To see that $\Int_\Q(\{a\},A)$ is not integrally closed, note that $\tfrac{X}{2} \in \mcR$, but $\tfrac{a}{2} \notin A$. Hence, $\Int_\Q(\{a\},A) \subsetneqq \mcR$.

Next, we show that $\Int_\Q(\{b\},A) = \mcR$. We have $\Int_\Q(\{b\},A) \subseteq \mcR$ by Theorem \ref{integral closure}. For the reverse containment, let $f \in \mcR$. Then, $f(1+\sqrt{5}) \in \Z[\theta]$, so there exist $c_0, c_1 \in \Z$ such that $f(1+\sqrt{5}) = c_0 + c_1 \theta$. Note that $\Q(b) \cong \Q[X]/(\mu(X)) \cong \Q(1+\sqrt{5})$, so there exists a ring isomorphism $\phi: \Q(1+\sqrt{5}) \to \Q(b)$ such that $\phi(1+\sqrt{5}) = b$. As $\phi(\theta) = \tfrac{b}{2} = \smat{0}{1}{1}{1}$ and polynomials in $\Q[X]$ commute with $\phi$, we have
\begin{equation*}
f(b) = f(\phi(1+\sqrt{5})) = \phi(f(1+\sqrt{5})) = c_0 + c_1\smat{0}{1}{1}{1} \in A.
\end{equation*}
This shows that $f \in \Int_\Q(\{b\},A)$. Thus, $\Int_\Q(\{b\},A) = \mcR$ is integrally closed.
\end{Ex}

\begin{Ex}\label{matrix ex 2}
Again let $D = \Z$ and $A = M_2(\Z)$. Let $k \in \Z$, $k > 0$. Let $a=\smat{0}{k}{k}{0}$ and $b=\smat{k}{0}{0}{-k}$. Then, $\mu(X)=X^2-k^2$ is the minimal polynomial of both $a$ and $b$. As in the previous example, $a$ and $b$ are conjugate over $GL(2,\Q)$, but are not conjugate over $GL(2,\Z)$. We demonstrate that $\Int_\Q(\{a\},A)$ is not integrally closed, but $\Int_\Q(\{b\},A)$ is integrally closed.

This time, the integral closure of both $\Int_\Q(\{a\},A)$ and $\Int_\Q(\{b\},A)$ is $\mcR=\Int_\Q(\{\pm k\}, \Z)$. We have $\tfrac{X-k}{2k} \in \mcR \setminus \Int_\Q(\{a\},A)$, because
\begin{equation*}
\tfrac{1}{2k}(a-k) = \begin{bmatrix} -k/2 & k/2 \\ k/2 & -k/2 \end{bmatrix} \notin A.
\end{equation*}
Thus, $\Int_\Q(\{a\},A)$ is not integrally closed. However, since $b$ is a diagonal matrix, for each $f \in \mcR$ we have $f(b) = \smat{f(k)}{0}{0}{f(-k)}$, and this is in $A$ because $f(\pm k) \in \Z$. Hence, $\Int_\Q(\{b\},A) = \mcR$.
\end{Ex}

One distinguishing feature between $a$ and $b$ in these examples is the structure of the algebras $A \cap K[a]$ and $A \cap K[b]$. For each $a \in A$, $K[a]$ is a commutative $K$-subalgebra of $B$ and the intersection $A\cap K[a]$ is a $D$-subalgebra of $K[a]$. Recall that a commutative ring $R$ (possibly with zero divisors) is said to be integrally closed if it is integrally closed in its total ring of fractions $T$ (i.e.\ every element of $T$ which satisfies a monic polynomial over $R$ belongs to $R$).

\begin{Lem}\label{A cap K[a] integral closure}
Let $a \in A$, let $R = A \cap K[a]$, and let $T$ be the total ring of fractions of $R$. 
\begin{enumerate}[(1)]
\item $T = K[a]$.
\item $R$ is integrally closed if and only if $R$ is equal to the integral closure of $D$ in $K[a]$.
\end{enumerate}
\end{Lem}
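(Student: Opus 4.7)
The plan is to prove part (1) first, exploiting that $K[a]$ is a finite-dimensional commutative $K$-algebra and hence Artinian, and then deduce part (2) by combining (1) with the observation that every element of $A$ is $D$-integral.

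For part (1), I would first note that since $B$ is finite-dimensional over $K$ and $a \in B$, the subalgebra $K[a]$ is a finite-dimensional commutative $K$-algebra, hence Artinian. To show $K[a] \subseteq T$, let $b \in K[a]$ and write $b = \sum c_i a^i$ with $c_i \in K$; clearing denominators yields $b = r/d$ with $r \in D[a] \subseteq R$ and $d \in D\setminus\{0\}$. Since $A$ is torsion-free as a $D$-module, multiplication by $d$ is injective on $R$, so $d$ is a regular element of $R$, placing $b$ in $T$. For the reverse containment, I must verify that any regular element $s$ of $R$ becomes a unit in $K[a]$: if $s t = 0$ in $K[a]$ for some $t \in K[a]$, writing $t = r'/e$ with $r' \in R$ and $e \in D\setminus\{0\}$ gives $s r' = 0$ in $R$, forcing $r' = 0$ and hence $t = 0$; so $s$ is a nonzero-divisor in the Artinian ring $K[a]$ and therefore a unit. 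Thus every fraction $r/s$ with $r \in R$ and $s$ regular in $R$ already lies in $K[a]$, proving $T = K[a]$.

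For part (2), by (1) the phrase ``$R$ is integrally closed'' means integrally closed inside $K[a]$. Let $\tilde D$ denote the integral closure of $D$ in $K[a]$. Because $A$ is a finitely generated $D$-module, the standard determinant/Cayley--Hamilton argument shows every element of $A$ is integral over $D$, so $R \subseteq \tilde D$. If $R = \tilde D$, then by transitivity of integrality any element of $K[a]$ integral over $R$ is integral over $D$, hence belongs to $\tilde D = R$, so $R$ is integrally closed. Conversely, if $R$ is integrally closed in $K[a]$, then for $b \in \tilde D$, $b$ is integral over $D \subseteq R$, hence over $R$, hence in $R$; this gives $\tilde D \subseteq R$ and equality.

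The only nontrivial step is the verification that regular elements of $R$ remain regular (and hence invertible) in $K[a]$; this is the technical heart of (1), and once it is in hand both parts follow from standard facts about Artinian rings and integrality. Everything else uses only that $A$ is a torsion-free, finitely generated $D$-module.
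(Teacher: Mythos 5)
Your proof is correct. For part (1), the paper's argument factors $\mu_a$ and writes $K[a]\cong\prod_i K[X]/(p_i(X)^{e_i})$ by the Chinese remainder theorem, then observes each local factor is its own total ring of fractions; you instead invoke the general fact that in the finite-dimensional (Artinian) $K$-algebra $K[a]$ every nonzerodivisor is a unit. Both routes hinge on the same underlying fact, but you make explicit a point the paper leaves implicit: that a regular element $s$ of $R$ remains regular in $K[a]$ (your clearing-of-denominators step using that $A$ is $D$-torsion-free). This is the verification one actually needs to justify the sandwich $K[a]\subseteq T\subseteq K[a]$, and your version is slightly more self-contained on that point. For part (2) your argument is essentially the paper's, repackaged: the paper shows the integral closure of $R$ in $T$ equals the integral closure $D'$ of $D$ in $K[a]$ (using $D\subseteq R$ and the integrality of $A$ over $D$), whereas you directly verify the two implications, with the same two ingredients — $R\subseteq\tilde D$ from integrality of $A$ over $D$, and transitivity of integrality. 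Both are correct and of comparable length; neither approach yields anything the other does not.
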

\begin{proof}
(1) Recall that $\mu_a \in D[X]$ is the minimal polynomial of $a$ over $D$. Factor $\mu_a$ over $K$ as $\mu_a(X) = \prod_{i=1}^t p_i(X)^{e_i}$, where each $p_i \in K[x]$ is irreducible and each $e_i \geq 1$. 
Then,
\begin{equation*}
K[a] \cong K[x]/(\mu_a(X)) \cong \prod_{i=1}^t K[x]/(p_i(X)^{e_i}).
\end{equation*}
Each ring $K[x]/(p_i(X)^{e_i})$ is local with maximal ideal generated by $p_i(X)$. In each such ring, the set of zero divisors is equal to the maximal ideal, and all other elements are units. Hence, each of these rings is equal to its own total ring of fractions. It follows that $K[a]$ is also equal to its own total ring of fractions. Now, $D[a] \subseteq R \subseteq K[a]$, and considering the total rings of fractions of these rings, we see that $K[a] \subseteq T \subseteq K[a]$. Thus, $T=K[a]$.

(2) Let $R'$ be the integral closure of $R$ in $T$, and let $D'$ be the integral closure of $D$ in $K[a]$. We have $D \subseteq R$, and $T=K[a]$ by (1), so $D' \subseteq R'$. Conversely, let $b \in R' \subseteq K[a]$. Then, there exists a monic $f \in R[X] \subseteq A[X]$ such that $f(b) = 0$. Since each coefficient of $f$ is in $A$ and $A$ is integral over $D$, $b$ is integral over $D$. Hence, $b \in D'$ and $R' \subseteq D'$.
\end{proof}

In both Example \ref{matrix ex 1} and Example \ref{matrix ex 2}, the ring $A \cap K[a]$ is not integrally closed. In the former example, $a/2 \in K[A] \setminus A$ is integral over $D$, and in the latter $(a-k)/2k \in K[a] \setminus A$ is integral over $D$. As we now proceed to show, $\Int_K(\{a\},A)$ is integrally closed if and only if $A \cap K[a]$ is integrally closed. 

\begin{Lem}\label{lem: eigenvalues}
Let $b \in B$. 
\begin{enumerate}[(1)]
\item $b$ is integral over $D$ if and only if $\Omega_b \subseteq \oD$.
\item For each $f \in K[X]$, we have $\Omega_{f(b)} = f(\Omega_b)$.
\end{enumerate}
\end{Lem}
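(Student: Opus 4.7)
The plan is to handle parts (1) and (2) separately, in each case exploiting the factorization of $\mu_b$ over $\oK$.

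For (1), the forward implication is immediate: if $g \in D[X]$ is monic with $g(b)=0$, then $\mu_b$ divides $g$ in $K[X]$, so every $\alpha \in \Omega_b$ is a root of $g$ and hence lies in $\oD$. Conversely, if $\Omega_b \subseteq \oD$, then writing $\mu_b(X) = \prod_{\alpha \in \Omega_b}(X-\alpha)^{m_\alpha}$ in $\oK[X]$ shows that the coefficients of $\mu_b$ are symmetric functions of elements of $\oD$, hence lie in $\oD$. Since these coefficients also lie in $K$ and $D$ is integrally closed, they lie in $D$, so $\mu_b \in D[X]$ witnesses that $b$ is integral over $D$.

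For (2), the inclusion $f(\Omega_b) \subseteq \Omega_{f(b)}$ is straightforward: since $\mu_{f(b)}(f(b)) = 0$, the polynomial $\mu_{f(b)}(f(X)) \in K[X]$ is annihilated at $b$ and so is divisible by $\mu_b$ in $K[X]$; evaluating at any $\alpha \in \Omega_b$ then yields $\mu_{f(b)}(f(\alpha)) = 0$, placing $f(\alpha)$ in $\Omega_{f(b)}$. For the reverse inclusion, I would fix a finite extension $L/K$ that splits $\mu_b$, write $\mu_b(X) = \prod_{\alpha \in \Omega_b}(X-\alpha)^{m_\alpha}$ in $L[X]$, and introduce the auxiliary polynomial $h(Y) := \prod_{\alpha \in \Omega_b}(Y - f(\alpha))^{m_\alpha} \in L[Y]$. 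Because $(X-\alpha)$ divides $f(X) - f(\alpha)$ in $L[X]$ for each $\alpha$, multiplying up gives $\mu_b(X) \mid h(f(X))$ in $L[X]$. Substituting $X = b$ and using $\mu_b(b) = 0$ in $B$ then shows $h(f(b)) = 0$ in $B \otimes_K L$. By flat base change, $\mu_{f(b)}$ remains the minimal polynomial of $f(b) \otimes 1$ over $L$, so $\mu_{f(b)}$ divides $h$ in $L[Y]$, and therefore every root of $\mu_{f(b)}$ coincides with some $f(\alpha)$.

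The main subtlety lies in part (2): because $\mu_b$ may be inseparable and $K[b]$ need not be a field, one cannot simply evaluate through ring homomorphisms $K[b] \to \oK$ sending $b \mapsto \alpha$. Tracking multiplicities in the factorization of $\mu_b$ and working over a splitting field $L$ bypasses this difficulty without requiring an explicit structural decomposition of $K[b] \otimes_K \oK$.
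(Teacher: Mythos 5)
Your proof is correct, and for part (2) it takes a genuinely different route from the paper. The paper embeds $B$ into a matrix ring $M_n(K)$, identifies $\Omega_b$ with the set of eigenvalues of the corresponding matrix $M_b$, and then invokes the spectral mapping theorem (eigenvalues of $f(M_b)$ are the images under $f$ of eigenvalues of $M_b$) together with a cited lemma from an earlier paper for part (1). You instead argue entirely with polynomial divisibility: for the forward inclusion you observe $\mu_b \mid \mu_{f(b)}(f(X))$ in $K[X]$ and evaluate at $\alpha \in \Omega_b$; for the reverse you pass to a splitting field $L$ of $\mu_b$, form $h(Y) = \prod_\alpha (Y - f(\alpha))^{m_\alpha}$, use $\mu_b(X) \mid h(f(X))$ to get $h(f(b)) = 0$, and then use flatness of $L/K$ to see that $\mu_{f(b)}$ stays the minimal polynomial of $f(b)\otimes 1$ over $L$ and hence divides $h$. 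Your argument for part (1) likewise reproves directly what the paper delegates to a citation. Both approaches are valid; the paper's is shorter because it leans on linear algebra and a prior lemma, while yours is self-contained, avoids choosing a matrix representation, and handles multiplicities and possible inseparability of $\mu_b$ explicitly rather than implicitly through the triangularization underlying the spectral mapping theorem.
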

\begin{proof}
Let $n$ be the vector space dimension of $B$ over $K$. Then, $B$ embeds into the matrix ring $M_n(K)$, and under this embedding $K$ corresponds to the field of scalar matrices in $M_n(K)$. Thus, each $b \in B$ corresponds to a matrix $M_b \in M_n(K)$ with minimal polynomial $\mu_b \in K[X]$. The eigenvalues of $M_b$ are exactly the roots of $\mu_b$, and it is shown in \cite[Lemma 2]{PerWer} that $\mu_b \in D[X]$ if and only if each eigenvalue of $M_b$ is in $\oD$. This proves (1). For (2), we use the fact that for a matrix $C \in M_n(K)$, a scalar $\lambda \in K$, and a polynomial $f \in K[X]$, $\lambda$ is an eigenvalue of $C$ if and only if $f(\lambda)$ is an eigenvalue of $f(C)$. 
\end{proof}

\begin{Prop}\label{A cap K[a] and IVP}
Let $a \in A$. The following are equivalent.
\begin{enumerate}[(1)]
\item $\Int_K(\{a\},A)$ is integrally closed.
\item $\Int_K(\{a\},A) = \Int_K(\Omega_a, \overline{D})$.
\item $A \cap K[a]$ is integrally closed.
\end{enumerate}
\end{Prop}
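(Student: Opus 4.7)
The plan is to observe that the equivalence of (1) and (2) is essentially a direct translation of Theorem \ref{integral closure}(2), which identifies $\Int_K(\Omega_a,\oD)$ as the integral closure of $\Int_K(\{a\},A)$; so $\Int_K(\{a\},A)$ is integrally closed iff it already equals $\Int_K(\Omega_a,\oD)$. All the substantive work lies in the equivalence of (2) and (3), and for this I would pass freely between polynomials $g\in K[X]$ and elements $g(a)\in K[a]$, using Lemma~\ref{lem: eigenvalues}.

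The bridge is the following observation, which I would state and prove explicitly at the start: an element $b\in K[a]$ is integral over $D$ if and only if, writing $b=g(a)$ for some $g\in K[X]$, one has $g\in\Int_K(\Omega_a,\oD)$. Indeed, by Lemma~\ref{lem: eigenvalues}(2), $\Omega_b=\Omega_{g(a)}=g(\Omega_a)$, and by Lemma~\ref{lem: eigenvalues}(1), $b$ is integral over $D$ iff $\Omega_b\subseteq\oD$, i.e. iff $g(\Omega_a)\subseteq\oD$. Let $D'$ denote the integral closure of $D$ in $K[a]$; by Lemma~\ref{A cap K[a] integral closure}(2), (3) is equivalent to $R:=A\cap K[a]=D'$. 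Since $A$ is integral over $D$, we always have $R\subseteq D'$, so the only content is the reverse containment.

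Assume (2). To show $D'\subseteq R$, take $b\in D'$ and write $b=g(a)$ with $g\in K[X]$. By the bridge, $g\in\Int_K(\Omega_a,\oD)=\Int_K(\{a\},A)$, so $b=g(a)\in A\cap K[a]=R$. This gives $R=D'$, and thus (3). Conversely, assume (3), so $R=D'$. The inclusion $\Int_K(\{a\},A)\subseteq\Int_K(\Omega_a,\oD)$ is always true. For the reverse, pick $g\in\Int_K(\Omega_a,\oD)$; then $g(\Omega_a)\subseteq\oD$, so by the bridge $g(a)$ is integral over $D$, hence $g(a)\in D'=R\subseteq A$, giving $g\in\Int_K(\{a\},A)$. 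This yields (2).

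The main (mild) obstacle is keeping straight the three different ambient rings $A$, $K[a]$, and $\oD$, and verifying that Lemma~\ref{lem: eigenvalues} applies to arbitrary $b\in K[a]\subseteq B$, not merely to elements of $A$. Once that is in hand, both implications reduce to the same one-line translation between the polynomial $g$ and the value $g(a)$. The arguments for (2)$\Rightarrow$(3) and (3)$\Rightarrow$(2) are then formally dual, and no further input beyond Theorem~\ref{integral closure}(2), Lemma~\ref{A cap K[a] integral closure}, and Lemma~\ref{lem: eigenvalues} is needed.
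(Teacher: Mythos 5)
Your proof is correct, and it differs from the paper's in one meaningful place. For the equivalence $(1)\Leftrightarrow(2)$ and the direction $(3)\Rightarrow(2)$ you do exactly what the paper does: cite Theorem~\ref{integral closure} for the former, and for the latter use Lemma~\ref{lem: eigenvalues} plus the identification $A\cap K[a]=D'$ from Lemma~\ref{A cap K[a] integral closure}. The genuine divergence is in how condition~(3) is derived: the paper proves $(1)\Rightarrow(3)$ by an integrality argument --- given $b=f(a)$ integral over $D$ via a monic $g\in D[X]$, it observes $g\circ f\in\Int_K(\{a\},A)$, concludes that $f$ is integral over $\Int_K(\{a\},A)$, and then invokes integral closedness of $\Int_K(\{a\},A)$ to land $f$ in the ring. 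You instead prove $(2)\Rightarrow(3)$, and you do so by isolating the ``bridge'' statement (that $g(a)$ is integral over $D$ iff $g\in\Int_K(\Omega_a,\oD)$, which is well-defined since $\Omega_{g(a)}=g(\Omega_a)$ depends only on $g(a)$) and reading it off in both directions. This makes your two substantive implications formally dual to each other and avoids the integral-closure trick entirely; the paper's route is a little more indirect but showcases a useful technique (passing integrality through composition $g\circ f$). Both are valid; yours is slightly more economical given that Lemma~\ref{lem: eigenvalues} is already on the table.
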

\begin{proof}

(1) $\Leftrightarrow$ (2) This follows from Theorem \ref{integral closure}, because $\{a\}$ is finite.

(3) $\Rightarrow$ (2) Assume that $A \cap K[a]$ is integrally closed. From Theorem \ref{integral closure}, $\Int_K(\Omega_a, \overline{D})$ is the integral closure of $\Int_K(\{a\},A)$, and so it is always true that $\Int_K(\{a\},A) \subseteq \Int_K(\Omega_a, \overline{D})$. Let $f \in \Int_K(\Omega_a, \overline{D})$. Then, $f(a) \in K[a]$, because $f \in K[X]$, and $f(a)$ is integral over $D$ by Lemma \ref{lem: eigenvalues} because $\Omega_{f(a)} = f(\Omega_a) \subseteq \overline{D}$ by Lemma \ref{lem: eigenvalues}. By Lemma \ref{A cap K[a] integral closure}, $A \cap K[a]$ is the integral closure of $D$ in $K[a]$. Thus, $f(a) \in A \cap K[a] \subseteq A$, $f \in \Int_K(\{a\},A)$, and $\Int_K(\{a\},A) = \Int_K(\Omega_a, \overline{D})$.

(1) $\Rightarrow$ (3) Assume that $\Int_K(\{a\},A)$ is integrally closed. Let $b \in K[a]$ be integral over $D$. Then, $b = f(a)$ for some $f \in K[X]$, and there exists a monic $g \in D[X]$ such that $g(b) = 0$. So, $g \circ f \in \Int_K(\{a\},A)$. Since $g \in D[X] \subseteq \Int_K(\{a\},A)$, this implies that $f$ is integral over $\Int_K(\{a\},A)$, and hence $f  \in \Int_K(\{a\},A)$. Thus, $b = f(a) \in A \cap K[a]$. This shows that $A \cap K[a]$ is the integral closure of $D$ in $K[a]$, so $A \cap K[a]$ is integrally closed by Lemma \ref{A cap K[a] integral closure}.
\end{proof}

We now have necessary and sufficient conditions on $A$ and $D$ to describe when $\Int_K(\{a\},A)$ is \Pruf{}. The conditions extend naturally to the case where $S \subseteq A$ is finite.

\begin{Thm}\label{singleton Prufer}\label{pointwise Prufer domain}
Let $a \in A$. The following are equivalent.
\begin{enumerate}[(1)]
\item $\Int_K(\{a\},A)$ is \Pruf{}.
\item $D$ is \Pruf{} and $\Int_K(\{a\},A)$ is integrally closed.
\item $D$ is \Pruf{} and $A \cap K[a]$ is integrally closed.
\end{enumerate}
\end{Thm}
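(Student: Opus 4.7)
The statement is essentially a direct consequence of two results already established in the excerpt, so my plan is to assemble it by citing them rather than doing any fresh work.

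First, for the equivalence (1) $\Leftrightarrow$ (2), I would invoke Theorem \ref{integral closure}(2) with the finite subset $S = \{a\}$. That theorem says that for any finite $S \subseteq A$, the ring $\Int_K(S,A)$ is \Pruf{} if and only if $D$ is \Pruf{} and $\Int_K(S,A)$ is integrally closed. Specialized to $S=\{a\}$, this is exactly the content of (1) $\Leftrightarrow$ (2). There is nothing to verify beyond noting that a singleton is finite.

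Next, for the equivalence (2) $\Leftrightarrow$ (3), I would simply invoke the equivalence (1) $\Leftrightarrow$ (3) of Proposition \ref{A cap K[a] and IVP}, which states that $\Int_K(\{a\},A)$ is integrally closed if and only if $A \cap K[a]$ is integrally closed. Pairing this with the common hypothesis that $D$ is \Pruf{} gives the desired equivalence.

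The main obstacle — really the only nontrivial step — has already been dispatched in Proposition \ref{A cap K[a] and IVP}, where the translation between integral closedness of $\Int_K(\{a\},A)$ and that of the commutative subring $A \cap K[a]$ was carried out using Lemma \ref{A cap K[a] integral closure} (identifying $A \cap K[a]$ as the integral closure of $D$ in $K[a]$ when it is integrally closed) and Lemma \ref{lem: eigenvalues} (transporting integrality across polynomial evaluation via the root sets $\Omega_b$). In the present theorem I would therefore not reprove any of this; the proof would consist of two sentences chaining the two cited results together.
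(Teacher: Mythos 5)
Your proposal matches the paper's proof exactly: the paper simply says ``Apply Theorem \ref{integral closure} and Proposition \ref{A cap K[a] and IVP},'' which is precisely the two-citation chain you lay out, with Theorem \ref{integral closure}(2) giving (1) $\Leftrightarrow$ (2) and Proposition \ref{A cap K[a] and IVP} giving (2) $\Leftrightarrow$ (3). Correct and complete.
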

\begin{proof}
Apply Theorem \ref{integral closure} and Proposition \ref{A cap K[a] and IVP}.
\end{proof}

\begin{Cor}
   Let $D$ be a \Pruf{} domain and let $S\subset A$ be a finite subset. Then $\Int_K(S,A)$ is \Pruf{} if and only if for each $a\in S$, $\Int_K(\{a\}, A)$ is \Pruf{}. 
\end{Cor}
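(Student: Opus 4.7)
The plan is to reduce everything to the characterization in Theorem \ref{integral closure}(2), which says that for a finite subset $T \subseteq A$, the ring $\Int_K(T,A)$ is \Pruf{} iff $D$ is \Pruf{} and $\Int_K(T,A)$ is integrally closed. Since $D$ is already \Pruf{} by hypothesis, both sides of the desired equivalence collapse to statements about being integrally closed in $K(X)$; I just need the appropriate "intersection of integrally closed is integrally closed" fact in one direction and the "overrings of \Pruf{} are \Pruf{}" fact in the other. A small preliminary observation is that since $A \cap K = D$, the ring $D[X]$ is contained in every $\Int_K(T,A)$ under consideration, so each of these rings has $K(X)$ as its quotient field.

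For the forward direction, I would fix $a \in S$ and argue that $\Int_K(\{a\},A)$ is an overring of $\Int_K(S,A)$ inside $K(X)$: the inclusion $\Int_K(S,A) \subseteq \Int_K(\{a\},A)$ is immediate from the definition, and both sit in the common quotient field $K(X)$. Since overrings of \Pruf{} domains are \Pruf{}, this gives the conclusion at once.

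For the reverse direction, I would use the identity
$$\Int_K(S,A) = \bigcap_{a\in S}\Int_K(\{a\},A),$$
which follows straight from the definition. Each factor on the right is \Pruf{} by hypothesis, hence integrally closed in its quotient field $K(X)$. A standard argument then shows the intersection is integrally closed: any $f \in K(X)$ integral over $\Int_K(S,A)$ is integral over each $\Int_K(\{a\},A)$, so lies in each of them, and therefore in the intersection. Having established that $\Int_K(S,A)$ is integrally closed, I would invoke Theorem \ref{integral closure}(2) together with the hypothesis that $D$ is \Pruf{} to conclude that $\Int_K(S,A)$ is \Pruf{}.

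I do not anticipate a real obstacle: the result is essentially a packaging of Theorem \ref{integral closure} with two classical facts about \Pruf{} and integrally closed domains. The only mild subtlety is making sure the rings involved share the common quotient field $K(X)$, which follows from the inclusion $D[X] \subseteq \Int_K(T,A)$ for any $T \subseteq A$.
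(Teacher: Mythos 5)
Your proof is correct, and it follows the same overall structure as the paper's (the decomposition $\Int_K(S,A)=\bigcap_{a\in S}\Int_K(\{a\},A)$ and the overring fact for the forward direction), but it handles the converse direction by a genuinely different, and arguably cleaner, route. The paper rewrites each $\Int_K(\{a\},A)$ as $\Int_K(\Omega_a,\oD)$ using Theorem \ref{integral closure}, identifies the intersection as $\Int_K(\Omega_S,\oD)$, and then invokes Theorem \ref{integral closure}(1) on the finite set $\Omega_S$ to conclude Pr\"{u}ferness directly. You instead note that each $\Int_K(\{a\},A)$ is integrally closed in the common quotient field $K(X)$, that a finite intersection of such rings is again integrally closed in $K(X)$, and then apply only part (2) of Theorem \ref{integral closure}. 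Your version avoids any reference to part (1) or to the sets $\Omega_a$ at all, at the modest cost of having to state the ``intersection of integrally closed is integrally closed'' lemma; the paper's version is more explicit about what the intersection actually is, which is perhaps conceptually illuminating but strictly more than is needed. One small inaccuracy in your write-up: the containment $D[X]\subseteq\Int_K(T,A)$, which you use to pin down the quotient field $K(X)$, follows simply from $A$ being a $D$-algebra, not from the hypothesis $A\cap K=D$; this does not affect the argument.
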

\begin{proof}
   We have just to observe that 
   $$\Int_K(S,A)=\bigcap_{a\in S}\Int_K(\{a\}, A).$$
 If $\Int_K(S,A)$ is \Pruf{} then each of its overrings $\Int_K(\{a\}, A)$ is \Pruf{} as well.  Conversely, if $\Int_K(\{a\}, A)$ is \Pruf{}, then by Theorem \ref{integral closure}, $\Int_K(\{a\}, A)=\Int_K(\Omega_a,\oD)$. The intersection of these rings over all $a\in S$ is equal to $\Int_K(\Omega_S,\oD)$ which is \Pruf{} because $\Omega_S=\bigcup_{a\in S}\Omega_a$ is finite.
\end{proof}

\section{Necessary conditions}\label{sec: necessary}

In this section, we will establish necessary conditions for $\Int_K(A)$ to be \Pruf{}. Since one of our standard assumptions on $A$ is that $A \cap K = D$, we have $\Int_K(A) \subseteq \Int(D)$. Thus, if $\Int_K(A)$ is \Pruf{}, then so is $\Int(D)$. If the reader wishes, for the remainder of this section they may consider $D$ to be an \OurD-domain (i.e.\ $D$ is such that $\Int(D)$ is \Pruf), since this is the assumption we will make in our major theorems. However, we state our intermediary results using only the properties of the integrally closed domain $D$ that are needed to reach the desired conclusion. In fact, a number of these results (Lemmas \ref{lem: D-topology is Hausdorff}, \ref{lem: a not in pA}, \ref{lem: max spectrum}, \ref{lem: extending maximal ideals}; Propositions \ref{prop: when J(D)=0}, \ref{prop: when A is a direct product}; Theorem \ref{thm: IntK(A) not Prufer}) hold even when $D$ is not integrally closed.

Recall that an element $b \in B$ is integral over $D$ if $b$ solves a monic polynomial in $D[X]$, and that $A$ is integrally closed if $A=A'=\{b\in B\mid b \text{ is integral over }D\}$. While $A'$ may not be a ring, $A \cap K[a]$ is a commutative subring of $K[a]$ for each $a \in A$. So, the integral closure of $A \cap K[a]$ is a commutative ring, and these rings allow us to describe when $A=A'$.

\begin{Lem}\label{pointwise integrally closed} 
The ring $A$ is integrally closed if and only if, for each $a\in A$, $A\cap K[a]$ is integrally closed.
\end{Lem}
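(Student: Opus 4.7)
The plan is to use Lemma \ref{A cap K[a] integral closure}(2) as the bridge: it tells us that $A\cap K[a]$ is integrally closed (as a commutative ring with zero divisors) precisely when it coincides with the integral closure of $D$ inside $K[a]$. So in both directions the content reduces to comparing $A\cap K[a]$ with $\{b\in K[a]\mid b\text{ is integral over }D\}$.

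For the forward implication, assume $A=A'$ and fix $a\in A$. The inclusion $A\cap K[a]\subseteq\{b\in K[a]\mid b\text{ integral over }D\}$ is immediate because $A$ is integral over $D$. For the reverse, if $b\in K[a]\subseteq B$ is integral over $D$ then $b\in A'=A$, hence $b\in A\cap K[a]$. Applying Lemma \ref{A cap K[a] integral closure}(2) gives that $A\cap K[a]$ is integrally closed.

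For the converse, assume $A\cap K[a]$ is integrally closed for every $a\in A$ and take $b\in A'$. The key observation is that, since $B=(D\setminus\{0\})^{-1}A$, there exists a nonzero $d\in D$ with $a:=db\in A$; then $b=d^{-1}a\in K[a]$. By Lemma \ref{A cap K[a] integral closure}(2) applied to this $a$, the ring $A\cap K[a]$ equals the integral closure of $D$ in $K[a]$. Since $b$ lies in $K[a]$ and is integral over $D$, we conclude $b\in A\cap K[a]\subseteq A$, establishing $A'\subseteq A$ and hence $A=A'$.

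There is no real obstacle here once one uses the clearing-of-denominators trick $b=d^{-1}a$ to bring an arbitrary integral $b\in B$ into a single commutative subring $K[a]$ with $a\in A$; the two directions then follow immediately from the characterization of integral closure given in Lemma \ref{A cap K[a] integral closure}.
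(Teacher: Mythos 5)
Your proof is correct and follows essentially the same route as the paper: the converse direction uses the identical clearing-of-denominators trick to place an arbitrary $b\in A'$ inside $K[a]$ for some $a\in A$, and the forward direction reduces to showing $A\cap K[a]$ is the integral closure of $D$ in $K[a]$. The only cosmetic difference is that you explicitly invoke Lemma \ref{A cap K[a] integral closure}(2) as a bridge in both directions, whereas the paper argues the forward direction directly from transitivity of integrality, but the substance is the same.
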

\begin{proof}
Clearly, if $A=A'$ and $a\in A$, then every element $a'$ of $K[a]$ that is integral over $A\cap K[a]$ is also integral over $D$. Hence, $a' \in A$ and $A\cap K[a]$ is integrally closed. Conversely, suppose that $A\cap K[a]$ is integrally closed for each $a\in A$ and let $a'\in A'$. Since $a'\in B=(D\setminus \{0\})^{-1}A$, there exist  $a\in A$ and $d\in D$ such that $a'=\frac{a}{d}$. So, $a'\in K[a]$ and is integral over $D\subset D[a]\subseteq A\cap K[a]$. It follows that $a'\in A\cap K[a]\subset A$.
\end{proof}

 Recall that a ring $R$ is reduced if it has no nonzero nilpotent elements. One obstruction to $\Int_K(A)$ being \Pruf{} is the presence of nonzero nilpotent elements of $A$. In the next two lemmas, we show that if $\Int_K(A)$ is \Pruf{}, then $A$ must be reduced. We thank Hwankoo Kim for pointing out Lemma \ref{lem: D-topology is Hausdorff}.

\begin{Lem}\label{lem: D-topology is Hausdorff}
The $D$-topology on $A$ is Hausdorff. That is, $\bigcap_{d\in D\setminus\{0\}}dA=\{0\}$.
\end{Lem}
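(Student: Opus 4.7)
The plan is to reduce everything to the scalar case, namely the claim that $\bigcap_{d \in D \setminus \{0\}} dD = \{0\}$, and then transfer this to $A$ using the embedding of $A$ as a $D$-submodule of a free $K$-module of finite rank.

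First I would handle the scalar claim. Suppose $x \in \bigcap_{d \in D \setminus\{0\}} dD$; taking $d = x^2$ gives $x = x^2 y$ for some $y \in D$, and since $D$ is a domain this forces $xy = 1$ whenever $x \neq 0$. Then $x$ would be a unit, and the containment $x \in dD$ for all nonzero $d$ would force every nonzero $d$ to divide a unit, i.e.\ to be a unit itself. That would make $D$ a field, contradicting the standing assumption $D \subsetneq K$. Hence $x = 0$.

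Next I would pass to $A$. Since $B = A \otimes_D K$ is a finite-dimensional $K$-vector space, choosing a $K$-basis identifies $B$ with $K^n$, and under this identification $A$ embeds as a $D$-submodule of $K^n$ (using that $A$ is torsion-free over $D$). Because $A$ is finitely generated over $D$, picking a finite generating set and clearing denominators in each of its coordinate entries yields some $d_0 \in D \setminus \{0\}$ with $d_0 A \subseteq D^n$.

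Finally, suppose $a \in \bigcap_{d \in D \setminus\{0\}} dA$. For each nonzero $d$, write $a = d a_d$ with $a_d \in A$; torsion-freeness of $A$ makes $a_d$ equal to $a/d$ computed inside $B$. Then $d_0 a / d = d_0 a_d \in d_0 A \subseteq D^n$ for every nonzero $d$, so writing $d_0 a = (c_1, \ldots, c_n) \in D^n$ each coordinate lies in $\bigcap_{d \neq 0} dD$, which is $\{0\}$ by the scalar case. Hence $d_0 a = 0$, and torsion-freeness of $A$ gives $a = 0$.

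The only real content is the short scalar claim with the trick $d = x^2$; the rest is linear-algebra bookkeeping to move between $A$ and $D^n$, so I do not anticipate a significant obstacle.
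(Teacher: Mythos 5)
Your proof is correct, but it takes a genuinely different route from the paper's. The paper observes that $I=\bigcap_{d\ne 0}dA$ satisfies $dI=I$ for all nonzero $d$, so $I$ is a divisible submodule of the torsion-free module $A$, hence injective, hence a direct summand of $A$ and therefore finitely generated; then a Nakayama-type argument (the determinant trick, \cite[Corollary 2.5]{AtiyahMacDonald}) applied to $I=dI$ for a non-unit $d$ produces an element $1+cd\ne 0$ annihilating $I$, forcing $I=\{0\}$ by torsion-freeness. Your argument avoids the module-theoretic machinery entirely: you embed $A$ into $K^n$ via a $K$-basis of $B$, clear denominators on a finite generating set to land $d_0A$ inside $D^n$, and reduce the whole question to the scalar identity $\bigcap_{d\ne 0}dD=\{0\}$, which you settle with the clean observation that $x\in x^2D$ forces $x$ to be a unit and hence $D$ to be a field. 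The paper's proof is shorter once the cited facts are granted, but yours is more self-contained and elementary, using only the standing hypotheses (torsion-free, finitely generated, $D\subsetneq K$) and no input about injective or divisible modules. Both are valid.
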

\begin{proof}
Let $I=\bigcap_{d\in D\setminus\{0\}}dA$. Then, $I$ is a divisible $D$-submodule of $A$ because $dI = I$ for all nonzero $d \in D$. Hence, $I$ is injective because $A$ is torsion-free \cite[Theorem 2.4.5(3)]{KimWang}, and so $I$ is a direct summand of $A$. Thus, $I$ is a finitely generated $D$-submodule of $A$. Let $d \in D$ such that $d$ is not a unit of $D$, which exists because $D$ is not a field. By \cite[Corollary 2.5]{AtiyahMacDonald}, there exists $c \in D$ such that $(1+cd)I = \{0\}$. Since $d$ is not a unit of $D$, $1+cd \ne 0$. We conclude that $I = \{0\}$.
\end{proof}

\begin{Lem}\label{K[a] reduced is not needed}\mbox{}
\begin{enumerate}[(1)]
\item Let $a \in A$. If $A \cap K[a]$ is integrally closed, then $K[a]$ is reduced.
\item If $\Int_K(A)$ is \Pruf{}, then both $A$ and $B$ are reduced.
\end{enumerate}
\end{Lem}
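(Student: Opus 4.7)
For part (1), my plan is to use Lemma \ref{A cap K[a] integral closure}(2) together with the Hausdorff property of the $D$-topology (Lemma \ref{lem: D-topology is Hausdorff}). Suppose $z \in K[a]$ is nilpotent. For any scalar $c \in K$, the element $cz$ is again nilpotent and therefore integral over $D$ (it satisfies some $X^k$). Since $A \cap K[a]$ is integrally closed, Lemma \ref{A cap K[a] integral closure}(2) identifies it with the integral closure of $D$ in $K[a]$, so $cz \in A \cap K[a] \subseteq A$. Hence the entire line $Kz$ sits inside $A$. In particular $z/d \in A$ for every nonzero $d \in D$, meaning $z \in dA$ for all such $d$. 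Applying Lemma \ref{lem: D-topology is Hausdorff}, we conclude $z = 0$, so $K[a]$ is reduced.

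For part (2), the strategy is to reduce to part (1) by passing to the overrings $\Int_K(\{a\},A)$. Since $\Int_K(A) \subseteq \Int_K(\{a\},A) \subseteq K[X]$ share the same quotient field $K(X)$, the \Pruf{} hypothesis is inherited by $\Int_K(\{a\},A)$ for every $a \in A$. Then Theorem \ref{pointwise Prufer domain} yields that $A \cap K[a]$ is integrally closed, and part (1) gives that $K[a]$ is reduced for every $a \in A$.

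To finish, if $z \in A$ is nilpotent, set $a = z$; then $z \in K[z]$ is nilpotent in a reduced ring, so $z = 0$. Thus $A$ is reduced. For $B$, note that any nilpotent $z \in B$ can be written as $z = a/d$ with $a \in A$ and $d \in D \setminus \{0\}$ (using $B = (D \setminus \{0\})^{-1}A$); then $z^k = 0$ forces $a^k = 0$, and the previous step gives $a = 0$, hence $z = 0$.

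The only step that takes any thought is the passage from a single nilpotent $z$ to the conclusion $z=0$ in part (1); everything else is either a direct invocation of an earlier result or a straightforward overring/localization argument. The conceptual content of the lemma is really that integral-closedness of $A \cap K[a]$, when combined with the assumption that $A$ is a finitely generated torsion-free $D$-module (via the Hausdorff property), is incompatible with the existence of nontrivial nilpotents in $K[a]$, because such nilpotents would force the non-finitely-generated $K$-line $Kz$ into $A$.
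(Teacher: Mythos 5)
Your proof is correct, and part (1) takes a genuinely different (and arguably cleaner) route. The paper argues contrapositively: given a nonzero nilpotent $b = g(a)/d$, it uses Lemma~\ref{lem: D-topology is Hausdorff} to pick $c$ with $g(a) \notin cA$, then exhibits $g(X)/c$ as a non-element of $\Int_K(\{a\},A)$ whose $n$th power lies in $\Int_K(\{a\},A)$, thereby showing failure of integral closedness, and finally invokes Proposition~\ref{A cap K[a] and IVP} to translate back. You instead argue directly: a nilpotent $z \in K[a]$ has every scalar multiple $cz$ ($c \in K$) nilpotent, hence integral over $D$, hence in $A \cap K[a]$ by Lemma~\ref{A cap K[a] integral closure}(2); in particular $z \in dA$ for every nonzero $d \in D$, so $z = 0$ by the Hausdorff lemma. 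Your version bypasses the integer-valued polynomial ring entirely in part (1), making the conceptual point (nilpotents would force an entire $K$-line into the finitely generated module $A$) more transparent; the paper's version has the minor advantage of producing an explicit witness to non-integral-closedness, which is the form in which the fact is actually used downstream. Your part (2) is essentially the paper's argument, differing only in being phrased directly rather than by contradiction.
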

\begin{proof}
(1) We prove the contrapositive. Suppose that $K[a]$ is not reduced. Then, there exists a nonzero $b \in K[a]$ and $n \geq 2$ such that $b^n = 0$. Let $f \in K[X]$ be such that $b = f(a)$. Write $f(X) = g(X)/d$, where $g \in D[X]$ and $d \in D$. Since $f(a) \ne 0$, $g(a)$ is also nonzero. So, there exists a nonzero $c \in D$ such that $g(a) \notin cA$ by Lemma \ref{lem: D-topology is Hausdorff}. Then, the polynomial $g(x)/c \notin \Int_K(\{a\},A)$. However, $f(a)^n = 0$, so
\begin{equation*}
\Big(\dfrac{g(X)}{c}\Big)^n = \Big(\dfrac{df(X)}{c}\Big)^n \in \Int_K(\{a\},A).
\end{equation*}
Thus, $\Int_K(\{a\},A)$ is not integrally closed. By Proposition \ref{A cap K[a] and IVP}, $A \cap K[a]$ is not integrally closed.

(2) Suppose that $\Int_K(A)$ is \Pruf{}, but $A$ is not reduced. Let $a \in A$ be nonzero and nilpotent. Since $\Int_K(A)$ is \Pruf{}, its overring $\Int_K(\{a\},A)$ is also \Pruf{}, and hence is integrally closed. But, $K[a]$ is not reduced, so by (1), $A \cap K[a]$ is not integrally closed. By Proposition \ref{A cap K[a] and IVP}, $\Int_K(\{a\},A)$ is not integrally closed, which is a contradiction. Thus, $A$ must be reduced, and it follows that $B$ is also reduced.
\end{proof}

\begin{Thm}\label{thm: A is a direct product}
    Suppose that $\Int_K(A)$ is a \Pruf{} domain. Then, $A=A'$, and $B$ and $A$ have the following structure:
    \begin{itemize}
        \item For some $t \geq 1$, $B\cong\prod_{i=1}^t \mcD_i$, where $\mcD_i$ is a division ring over $K$ for each $i=1,\ldots,t$, and
        \item $A\cong \prod_{i=1}^t A_i$, where $A_i\subset \mathcal{D}_i$ is an integrally closed domain (but is not necessarily commutative) for each $i=1,\ldots,t$.
    \end{itemize}
\end{Thm}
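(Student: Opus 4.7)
\medskip

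\noindent\textbf{Proof proposal.} The plan is to package the results already assembled in this section. First, observe that if $\Int_K(A)$ is \Pruf{}, then for every $a\in A$ the overring $\Int_K(\{a\},A)$ is \Pruf{} and hence integrally closed; by Proposition \ref{A cap K[a] and IVP}, $A\cap K[a]$ is integrally closed for each $a\in A$, and then Lemma \ref{pointwise integrally closed} gives at once the equality $A=A'$. This disposes of the first assertion.

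Next, I would obtain the structure of $B$. By Lemma \ref{K[a] reduced is not needed}, $B$ is reduced. Since $B$ is a finite-dimensional $K$-algebra it is Artinian, and a reduced Artinian ring has zero Jacobson radical (any nilpotent ideal would consist of nilpotent elements), so $B$ is semisimple. Wedderburn--Artin gives $B\cong \prod_{i=1}^{t} M_{n_i}(\mcD_i)$ with each $\mcD_i$ a division algebra over $K$; but a matrix ring $M_{n_i}(\mcD_i)$ with $n_i\geq 2$ contains the nonzero nilpotent element $E_{12}$, which would lift to a nonzero nilpotent of $B$. Hence $n_i=1$ for all $i$, and $B\cong \prod_{i=1}^{t}\mcD_i$.

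For the final assertion, let $e_1,\ldots,e_t\in B$ be the central primitive idempotents corresponding to this product decomposition. Each $e_i$ satisfies $e_i^{2}-e_i=0$, so $e_i$ is integral over $D$, i.e.\ $e_i\in A'$. Using the first step, $A'=A$, so $e_i\in A$ for every $i$. Setting $A_i:=e_i A = A\cap \mcD_i$, the decomposition $1=\sum_i e_i$ in $A$ with $e_i e_j=0$ for $i\neq j$ yields $A\cong \prod_{i=1}^{t} A_i$ as rings. Each $A_i$ is a subring of the division ring $\mcD_i$, and therefore a (possibly noncommutative) domain. Finally, each $A_i$ is integrally closed in $\mcD_i$: an element $b\in \mcD_i$ integral over $A_i$ is integral over $A$, hence integral over $D$ because $A$ is; so $b\in A'=A$, and then $b\in A\cap \mcD_i = A_i$. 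Moreover one checks that $(D\setminus\{0\})^{-1}A_i = e_i B = \mcD_i$, so this is integral closure in the ``correct'' ambient division ring.

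There is no real obstacle: the whole argument is a routine assembly of the machinery developed earlier in the section. The one conceptual point worth flagging is why the central idempotents of the Wedderburn decomposition of $B$ automatically land in $A$; this is exactly where the equality $A=A'$ from step one is used, and it is the hinge that converts the product decomposition of $B$ into a product decomposition of $A$.
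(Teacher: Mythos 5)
Your proof matches the paper's almost verbatim through the first two stages: Proposition \ref{A cap K[a] and IVP} plus Lemma \ref{pointwise integrally closed} give $A=A'$; Lemma \ref{K[a] reduced is not needed} gives $B$ reduced; Artinian plus reduced plus Wedderburn gives $B\cong\prod_i\mcD_i$; and lifting the central idempotents via $A=A'$ gives $A\cong\prod_i A_i$. That is all exactly the paper's route.

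The gap is in your final sentence establishing that each $A_i$ is integrally closed. You assert that $b\in\mcD_i$ ``integral over $A_i$ is integral over $A$, hence integral over $D$ because $A$ is.'' The last inference is transitivity of integrality through $A$, and that fails in general when the intermediate ring is noncommutative: if $b^n+a_{n-1}b^{n-1}+\cdots+a_0=0$ with $a_j\in A_i$, the $a_j$ need not commute with $b$ in the division ring $\mcD_i$, so one cannot form a module-finite commutative $D$-algebra $D[b,a_0,\ldots,a_{n-1}]$ and conclude that $b$ is integral over $D$. There is also a second, more elementary issue you skip: an element $b\in\mcD_i$ that satisfies a monic polynomial $\mu_i$ with coefficients in $D_i=De_i$ does \emph{not} satisfy $\mu_i$ (read with coefficients in $D$) inside $B$, because the constant term contributes $d_0e_j\ne 0$ in the other components $j\ne i$. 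The paper's proof sidesteps both problems at once by working directly with $b_i'\in\mcD_i$ integral over $D_i\cong D$ with minimal polynomial $\mu_i\in D[X]$, and observing that the element $b=(0,\ldots,b_i',\ldots,0)\in B$ is a root of $X\mu_i(X)$: the extra factor of $X$ kills the constant term in the components where $b$ vanishes. Hence $b$ is integral over $D$, so $b\in A'=A$, and $b_i'=be_i\in Ae_i=A_i$. Your argument needs to be replaced by this (or an equivalent) observation; the rest of the proof is fine.
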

\begin{proof}
Since $\Int_K(A)$ is \Pruf{}, each overring of the form $\Int_K(\{a\},A)$, where $a\in A$, is also \Pruf{}. By Theorem \ref{pointwise Prufer domain}, $A\cap K[a]$ is integrally closed for each $a\in A$. So, $A=A'$ by Lemma \ref{pointwise integrally closed}. Also, both $A$ and $B$ are reduced by Lemma \ref{K[a] reduced is not needed}.

Now, since $B$ has finite dimension over $K$ it is artinian (see e.g.\ \cite[\S 23, p. 158]{CR}), and $B$ contains no nonzero nilpotent ideal because it is reduced. By Wedderburn's theorems \cite[Theorem 7.1, Theorem 7.4]{Reiner}, $B$ is semisimple, thus isomorphic to a finite direct product of matrix rings $M_{n_i}(\mcD_i)$, where $\mcD_i$ is a division ring over (an isomorphic copy of) $K$ for each $i$. If any $n_i \geq 2$, then $M_{n_i}(\mcD_i)$---and hence $B$---contains nonzero nilpotent elements, which is impossible because $B$ is reduced. Hence, $B \cong \prod_{i=1}^t \mcD_i$ for some $t \geq 1$.

For the structure of $A$, let $e_i = 1_{\mcD_i}$ for each $1 \leq i \leq t$. Then, $\{e_1,\ldots,e_t\}$ is the set of orthogonal idempotents related to the above decomposition of $B$. Each $e_i$ is a root of $X^2-X \in D[X]$, and so is in $A$ because $A=A'$. Thus, $A$ decomposes as $\prod_{i=1}^t A_i$, where $A_i=Ae_i\subset Be_i=\mcD_i$; in particular, $A_i$ is a (possibly noncommutative) domain for each $i$. Finally, assume $b_i' \in \mcD_i$ is integral over $D e_i \cong D$ with minimal polynomial $\mu_i(X) \in D[X]$. Then, the element $b = (b_1, \ldots, b_t) \in B$ with $b_i=b_i'$ and $b_j = 0$ for $j \ne i$ is a root of $X\mu_i(X)$, and hence is integral over $D$. Since $A=A'$, it follows that $b_i' \in A_i$, and so $A_i = A_i'$.
\end{proof}

For any ring $R$, let $\msJ(R)$ be the Jacobson radical of $R$ and let $\Max(R)$ be the set of maximal ideals of $R$. We recall that $R$ is \textit{semiprimitive} if $\msJ(R) = \{0\}$. If $D$ is semiprimitive (which occurs, for instance, when $D = \Z$), we can impose even stronger necessary conditions on $A$ in order for $\Int_K(A)$ to be \Pruf{}. To do this, we first show that in some cases the existence of a nonzero nilpotent element in a simple residue ring of $A$ is enough to force $\Int_K(A)$ to not be \Pruf{}.

\begin{Lem}\label{lem: a not in pA}
Assume that $D$ and $A$ satisfy both conditions (i) and (ii) below:
\begin{enumerate}[(i)]
\item $D$ contains a principal ideal $\pi D$ of finite index.
\item $A/\pi A$ has a simple residue ring isomorphic to $M_n(F)$, where $n \geq 2$ and $F$ is a finite field.
\end{enumerate}
Then, the following hold.
\begin{enumerate}[(1)]
\item There exists $a \in A$ such that $a^2 \in \pi^2 A$, but $a \notin \pi A$.
\item $\Int_K(A)$ is not \Pruf{}.
\end{enumerate}
\end{Lem}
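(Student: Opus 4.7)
The plan is to prove (1) by two successive liftings of orthogonal idempotents, and then to deduce (2) from (1) together with Theorem~\ref{thm: A is a direct product}.

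For part (1), I will first observe that $A/\pi A$ is finite: since $A$ is finitely generated as a $D$-module and $|D/\pi D|<\infty$, a finite generating set gives a surjection $(D/\pi D)^m \twoheadrightarrow A/\pi A$. In particular $A/\pi A$ is Artinian, so its Jacobson radical $J$ is nilpotent, and Artin--Wedderburn yields $(A/\pi A)/J \cong \prod_{i=1}^{s} M_{n_i}(F_i)$ with one factor equal to the given $M_n(F)$. Because $n \ge 2$, the diagonal matrix units $E_{11}$ and $E_{22}$ in that factor (zero elsewhere) are orthogonal idempotents of $(A/\pi A)/J$, and the standard lift of orthogonal idempotents modulo a nil ideal produces orthogonal idempotents $e_1, e_2 \in A/\pi A$ reducing to them.

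Next I will lift once more. The kernel of $A/\pi^2 A \twoheadrightarrow A/\pi A$ is $\pi A/\pi^2 A$, which is square-zero (since $(\pi A)^2 \subseteq \pi^2 A$) and hence nil, so $e_1, e_2$ lift further to orthogonal idempotents $\tilde e_1, \tilde e_2 \in A/\pi^2 A$. I then choose $y \in A$ whose image in $M_n(F)$ is the off-diagonal matrix unit $E_{12}$, let $\bar y$ denote its reduction in $A/\pi^2 A$, and set
\[
a_0 := \tilde e_1\, \bar y\, \tilde e_2 \in A/\pi^2 A.
\]
Orthogonality forces $a_0^2 = 0$ in $A/\pi^2 A$, while the image of $a_0$ in $M_n(F)$ equals $E_{11} E_{12} E_{22} = E_{12} \ne 0$. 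Hence any lift $a \in A$ of $a_0$ satisfies $a^2 \in \pi^2 A$ and $a \notin \pi A$, proving (1).

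For part (2), I will assume $\Int_K(A)$ is Pr\"ufer and derive a contradiction. By Theorem~\ref{thm: A is a direct product} we have $A = A'$. With $a$ as in (1), write $a^2 = \pi^2 c$ with $c \in A$, and choose a monic $g \in D[X]$ annihilating $c$; then $a/\pi \in B$ is a root of the monic polynomial $g(X^2) \in D[X]$, so $a/\pi$ is integral over $D$ and therefore lies in $A' = A$. This forces $a \in \pi A$, contradicting (1). The main obstacle is part~(1): I need $a^2 \in \pi^2 A$ rather than merely $a^2 \in \pi A$, and it is this stronger requirement that forces me to work inside $A/\pi^2 A$ and to lift the orthogonal idempotents across two nil layers so that orthogonality---and hence square-nilpotence of $a_0$---is preserved at the $\pi^2$ level.
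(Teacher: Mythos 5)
Your part (1) is essentially the paper's argument: they lift primitive orthogonal idempotents $\overline{e_1},\overline{e_2}$ directly from $(A/\pi^2 A)/J_2 \cong (A/\pi A)/J_1$ to $A/\pi^2 A$ in one step and then take any $b \in e_1 R_2 e_2$ not in $\pi R_2$; you do the lift in two stages (first to $A/\pi A$, then across the square-zero ideal $\pi A/\pi^2 A$) and pick the concrete element $\tilde e_1\,\bar y\,\tilde e_2$ with $\bar y$ lifting $E_{12}$. These are the same idea, and your more explicit choice is fine.

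For part (2), however, you take a genuinely different and noticeably heavier route. The paper never invokes Theorem~\ref{thm: A is a direct product}: it observes directly that $\Int_K(\{a\},A)$ fails to be integrally closed, since $X^2/\pi^2 \in \Int_K(\{a\},A)$ (because $a^2/\pi^2 \in A$) while $X/\pi \notin \Int_K(\{a\},A)$ (because $a \notin \pi A$), and $X/\pi$ is a root of the monic $Y^2 - X^2/\pi^2$; as $\Int_K(\{a\},A)$ is an overring of $\Int_K(A)$ inside $K(X)$, this immediately rules out $\Int_K(A)$ being Pr\"ufer. Your argument --- assume Pr\"ufer, deduce $A=A'$ from Theorem~\ref{thm: A is a direct product}, show $a/\pi$ is integral over $D$ via $g(X^2)$ for $g$ monic annihilating $c = a^2/\pi^2$, and contradict $a \notin \pi A$ --- is correct and not circular (Theorem~\ref{thm: A is a direct product} is proved before this lemma and does not depend on it), but it pulls in the full Artin--Wedderburn decomposition of $B$ when a one-line integral-closure check on the overring $\Int_K(\{a\},A)$ suffices. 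The paper's version is the more elementary and self-contained one; yours buys nothing extra here, though it is a valid alternative.
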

\begin{proof}
(1) Let $R_1 = A/\pi A$ and $R_2 = A/\pi^2 A$, which have Jacobson radicals $J_1 := \msJ(R_1)$ and $J_2 := \msJ(R_2)$. By (i), $D/\pi D$ is finite, and $A$ is finitely generated as a $D$-module, so $R_1$ is finite. So, $R_2/J_2 \cong R_1/J_1$ is a finite semisimple ring. By (ii), $R_2/J_2$ contains a direct summand $S \cong M_n(F)$.

Write $1_S = \overline{e_1} + \cdots + \overline{e_n}$ for primitive orthogonal idempotents $\overline{e_1}, \ldots, \overline{e_n} \in S$. Then, $\overline{e_1} S \overline{e_2} \ne 0$ in $R_2/J_2$, and we may lift $\overline{e_1}$ and $\overline{e_2}$ to $R_2$ (see e.g.\ \cite[Prop.\ 21.25]{Lam1}). Thus, $R_2$ contains orthogonal idempotents $e_1$ and $e_2$ such that $e_1 R_2 e_2 \not\subseteq J_2$. Hence, there exists $b \in e_1 R_2 e_2$ such that $b \notin \pi R_2 \subseteq J_2$. However, $b^2 = 0$ because $(e_1 R_2 e_2)(e_1 R_2 e_2) = \{0\}$. It follows that the desired element $a$ exists in $A$.

(2) Let $a \in A$ satisfy the conclusion of (1). Then, $\Int_K(\{a\}, A)$ is not integrally closed, because $X^2/\pi^2 \in \Int_K(\{a\}, A)$, but $X/\pi \notin \Int_K(\{a\}, A)$. Since $\Int_K(A) \subseteq \Int_K(\{a\},A)$, this implies that $\Int_K(A)$ is not \Pruf{}.
\end{proof}

\begin{Lem}\label{lem: max spectrum}
Let $\mfm \in \Max(A)$ and let $P = \mfm \cap D$.
\begin{enumerate}[(1)]
\item $P$ is a nonzero prime ideal of $D$.
\item If $D$ is one-dimensional, then $P \in \Max(D)$.
\item If $\Int_K(A)$ is \Pruf{}, then $P \in \Max(D)$, $P$ has finite index in $D$, and $\mfm$ has finite index in $A$.
\end{enumerate}
\end{Lem}
\begin{proof}
(1) Certainly, $P$ is a proper ideal of $D$ because $1 \notin P$. Let $cd \in P$. Then, $cd \in \mfm$, and since both $c$ and $d$ are central in $A$, we have $c A d \subseteq \mfm$. The ring $A$ may be noncommutative, but maximal ideals in noncommutative rings are still prime \cite[p.\ 165]{Lam1}, so the condition $c A d \subseteq \mfm$ implies that either $c \in \mfm$ or $d \in \mfm$ \cite[Prop.\ 10.2]{Lam1}. Thus, either $c$ or $d$ is in $P$. 

Suppose that $P=\{0\}$ and let $R = A/\mfm$. Then, $D$ would be isomorphic to a subring of $R$, because
\begin{equation*}
D = D/(\mfm \cap D) \cong (D + \mfm)/\mfm \subseteq R.
\end{equation*}
Now, $R$ is a simple ring and $D$ is central in $R$, so for each nonzero $d \in D$ we have $dR = R$. So, $d^{-1}$ exists in $R$. Since $A$ is integral over $D$, this implies that $d^{-1}$ is integral over $D$. It follows that $K$ is integral over $D$. By \cite[Proposition 5.7]{AtiyahMacDonald}, $D$ is a field. This is a contradiction. Thus, $P \ne \{0\}$.

(2) This follows from (1).

(3) Assume that $\Int_K(A)$ is \Pruf{}. Then, $\Int(D)$ is also \Pruf{}. By \cite[Prop.\ 6.3]{Chabert1987}, $D$ must be an almost Dedekind domain with finite residue fields. By (2), $P \in \Max(D)$, and hence $D/P$ is finite. Since $A$ is finitely generated as a $D$-module, $A/\mfm$ is also finite.
\end{proof}

\begin{Thm}\label{thm: IntK(A) not Prufer}
Assume that $A$ contains a maximal ideal $\mfm$ such that $A/\mfm \cong M_n(F)$, where $n \geq 2$ and $F$ is a finite field. Then, $\Int_K(A)$ is not \Pruf{}.
\end{Thm}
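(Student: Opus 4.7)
I would argue by contradiction: assume that $\Int_K(A)$ is \Pruf{}. Then its overring $\Int(D)$ is also \Pruf{}, and Theorem \ref{thm: loper} forces $D$ to be an \OurD{}-domain. Since $A$ is a $D$-algebra, the image of $D$ in $A/\mfm\cong M_n(F)$ lies in the center $F$, so if $\mfp=\mfm\cap D$ then $D/\mfp$ embeds in $F$; hence $\mfp$ is maximal with finite residue field, and $D_\mfp$ is a DVR. Pick any uniformizer $\pi\in\mfp$, so that $D_\mfp/\pi D_\mfp\cong D/\mfp$ is finite.

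The obstacle to invoking Lemma \ref{lem: a not in pA} directly is that $D$ itself need not admit a principal ideal of finite index, even when $D$ is \OurD{}. The natural workaround is to pass to the localization $A_\mfp=S^{-1}A$ (with $S=D\setminus\mfp$), which is a finitely generated $D_\mfp$-algebra whose maximal ideal $S^{-1}\mfm$ has residue ring $A_\mfp/S^{-1}\mfm\cong A/\mfm\cong M_n(F)$. Now both hypotheses of Lemma \ref{lem: a not in pA} hold for $(D_\mfp,A_\mfp,\pi)$, and part (1) of that lemma produces an element $a'\in A_\mfp$ satisfying $a'\notin\pi A_\mfp$ and $(a')^2\in\pi^2 A_\mfp$.

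Finally I would transfer this local witness to an element of $A$. Writing $a'=a/s$ with $a\in A$, $s\in S$, and picking $u\in S$ with $u a^2\in\pi^2 A$, I would set $b:=usa\in A$. A direct computation using that $u,s$ are central gives $b^2=us^2(ua^2)\in\pi^2 A$; on the other hand, if $b\in\pi A$ then the relation $usa=\pi c$ (some $c\in A$) would yield $a\in\pi A_\mfp$ after inverting the units $u,s\in D_\mfp$, and hence $a'=a/s\in\pi A_\mfp$, contradicting our choice of $a'$. Thus $(X/\pi)^2\in\Int_K(\{b\},A)$ while $X/\pi\notin\Int_K(\{b\},A)$, so the overring $\Int_K(\{b\},A)$ of $\Int_K(A)$ fails to be integrally closed, contradicting the assumption that $\Int_K(A)$ is \Pruf{}. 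The main subtle point is this last denominator-clearing step: one must ensure that the globally defined $b$ stays outside $\pi A$, which is exactly what the local nonmembership $a'\notin\pi A_\mfp$ delivers.
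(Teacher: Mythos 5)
Your proof is correct and shares the core mechanism with the paper's: localize at the prime $P = \mfm \cap D$, apply Lemma \ref{lem: a not in pA} to the localized algebra, and exhibit a non-integrally-closed overring of $\Int_K(A)$. There are, however, two points of divergence worth noting. First, you argue by contradiction and invoke Theorem \ref{thm: loper} up front to force $D$ to be an \OurD{}-domain, which guarantees $D_\mfp$ is a DVR with finite residue field. The paper instead works directly from the weaker hypothesis that $D$ is \Pruf{} and disposes of the bad cases separately: if $P$ has infinite residue field then $\Int(D) \subseteq D_P[X]$ cannot be \Pruf{}, and if $PD_P$ is not principal then $\Int(D_P) = D_P[X]$. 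Your route is shorter but leans on the harder classification theorem; the paper's is more elementary and case-driven. Second, you take the trouble to clear denominators and produce an explicit global witness $b = usa \in A$ with $b^2 \in \pi^2 A$, $b \notin \pi A$. This transfer step is carefully and correctly executed, but it is not needed: the paper simply uses the containment $\Int_K(A) \subseteq \Int_K(A_P)$ (which follows from the Rush localization lemma, Lemma \ref{lem: Rush trick}), so that it suffices to show $\Int_K(A_P)$ itself is not \Pruf{}, with the local element $a'$ already serving as the witness. Your version has the small virtue of exhibiting a concrete element of $A$ at which integral closure fails, at the cost of the extra denominator-clearing argument.
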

\begin{proof}
Let $P = D \cap \mfm$. By Lemma \ref{lem: max spectrum}(1), $P$ is a nonzero prime ideal of $D$. Suppose that $\Int_K(A)$ is \Pruf{}. Then, as in Lemma \ref{lem: max spectrum}, $D$ is an almost Dedekind domain and $P$ has finite index in $D$. So, $PD_P$ is principal. Let $\pi \in D_P$ be a generator of $PD_P$. Then, $A_P/\pi A_P$ has $M_n(F)$ as a residue ring and $[D_P:\pi D_P] = [D: P]$ is finite. By Lemma \ref{lem: a not in pA}(1), there exists $\alpha \in A_P$ such that $\alpha^2 \in \pi^2 A_P$, but $\alpha \notin \pi A_P$. Let $\alpha = a/d$, where $a \in A$ and $d \in D \setminus P$. Then, the ring $\Int_K(\{a\}, A_P)$ is not integrally closed, because it contains $X^2/(d \pi)^2$ but not $X/(d \pi)$. So, $\Int_K(\{a\}, A_P)$ is not \Pruf{}, which is a contradiction because $\Int_K(A) \subseteq \Int_K(\{a\}, A_P)$.
\end{proof}

\begin{Cor}\label{cor: IntK(A) not Prufer v1}
If $\Int_K(A)$ is \Pruf{}, then every simple residue ring of $A$ is a finite field.
\end{Cor}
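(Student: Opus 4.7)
The plan is to take an arbitrary maximal two-sided ideal $\mfm$ of $A$ and show that $R := A/\mfm$ is a finite field. Following the pattern of the proof of Theorem \ref{thm: IntK(A) not Prufer}, I will first show that $P := D \cap \mfm$ is a nonzero prime ideal of $D$ (and hence maximal, since $D$ is almost Dedekind); then argue that $R$ is a finite simple ring, so that Wedderburn--Artin together with Wedderburn's little theorem gives $R \cong M_n(F)$ for some finite field $F$ and some $n \geq 1$; and finally rule out $n \geq 2$ using Theorem \ref{thm: IntK(A) not Prufer}.

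Primality of $P$ is immediate from the argument in the first paragraph of the proof of Theorem \ref{thm: IntK(A) not Prufer}. The substantive step is to verify $P \ne 0$. By Theorem \ref{thm: A is a direct product}, $A \cong \prod_{i=1}^t A_i$ and $B \cong \prod_{i=1}^t \mcD_i$ with each $\mcD_i$ a division ring over $K$. Suppose for contradiction that $P = 0$; then $\mfm \cap (D \setminus \{0\}) = \emptyset$, so central-localization at $S = D \setminus \{0\}$ produces a proper two-sided ideal $S^{-1}\mfm$ of $B$. The proper two-sided ideals of $B$ are precisely those obtained by zeroing out a subset of coordinates, so $S^{-1}\mfm$ is contained in the maximal ideal $N_{j_0}$ of tuples whose $j_0$-th coordinate vanishes, for some index $j_0$. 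Maximality of $\mfm$ then forces $\mfm = N_{j_0} \cap A$, and thus $R \cong A_{j_0}$ is simple.

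To contradict the simplicity of $A_{j_0}$, I produce a nonzero proper two-sided ideal. Since $D$ is not a field, it has a nonzero maximal ideal $Q$; then $QA_{j_0}$ is a two-sided ideal because $Q$ is central in $A$, it is nonzero because $A_{j_0}$ is $D$-torsion-free, and it is proper by Nakayama's lemma applied to the finitely generated $D_Q$-module $(A_{j_0})_Q$---which is nonzero because its further localization at $D \setminus \{0\}$ is the division ring $\mcD_{j_0}$.

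Once $P \ne 0$ is established, the \OurD{} hypothesis guarantees $P$ is maximal with finite residue field $D/P$, so $R$ is a simple ring that is finitely generated as a module over $D/P$, hence finite. Wedderburn--Artin together with Wedderburn's little theorem then give $R \cong M_n(F)$ with $F$ finite, and Theorem \ref{thm: IntK(A) not Prufer} forces $n = 1$. The main obstacle in the plan is excluding $P = 0$: one must couple the product decomposition from Theorem \ref{thm: A is a direct product} with a noncommutative Nakayama argument to exhibit a nontrivial two-sided ideal in a single factor $A_{j_0}$.
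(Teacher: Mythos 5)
Your proof is correct, and it makes explicit a step that the paper's own proof leaves implicit. The paper argues simply that the hypothesis of Theorem \ref{thm: IntK(A) not Prufer} cannot hold when $\Int_K(A)$ is \Pruf{}, which rules out simple residue rings of the form $M_n(F)$ with $n\geq 2$ and $F$ finite; but to conclude that every simple residue ring is a \emph{finite field}, one also needs to know that every simple residue ring $A/\mfm$ already has this matrix-over-a-finite-field shape, i.e.\ that $P:=\mfm\cap D$ is nonzero (hence maximal with finite residue field, so $A/\mfm$ is finite and Wedderburn applies). You isolate and prove exactly this. Where you diverge from the paper is in how $P\neq\{0\}$ is established. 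The paper handles this later, in Lemma \ref{lem: extending maximal ideals}(1), by a central-embedding argument: if $P=\{0\}$ then $D\cong (D+\mfm)/\mfm$ sits centrally in the simple ring $A/\mfm$, and since $A/\mfm$ is module-finite over $D$, integrality forces the image of $D$ to be a field, contradicting $D\subsetneq K$. Your route instead invokes the full structure result of Theorem \ref{thm: A is a direct product}: under $P=\{0\}$ you localize $\mfm$ at $D\setminus\{0\}$, identify $A/\mfm$ with a factor $A_{j_0}$ of the product decomposition, and then exhibit $QA_{j_0}$ (for a nonzero $Q\in\Max(D)$) as a nonzero proper two-sided ideal via torsion-freeness and Nakayama, contradicting simplicity. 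Both arguments are sound; yours requires the heavier input of Theorem \ref{thm: A is a direct product} and a localization detour, while the paper's version in Lemma \ref{lem: extending maximal ideals}(1) uses only that $D$ is a one-dimensional non-field and that $A$ is module-finite over $D$. Once $P\neq\{0\}$ is in hand, your remaining steps (Wedderburn--Artin plus Wedderburn's little theorem to get $A/\mfm\cong M_n(F)$, then Theorem \ref{thm: IntK(A) not Prufer} to force $n=1$) carry out precisely the intended reduction.
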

\begin{proof}
Apply Theorem \ref{thm: IntK(A) not Prufer} and Lemma \ref{lem: max spectrum}.
\end{proof}

When $D$ is semiprimitive, combining Theorem \ref{thm: A is a direct product} with Corollary \ref{cor: IntK(A) not Prufer v1} allows us to show that $A$ is a commutative ring whenever $\Int_K(A)$ is \Pruf{}. In that case, the structure of $A$ is further restricted by Theorem \ref{thm: loper}. Towards this end, we now study how $D$ being semiprimitive affects $A$.

\begin{Lem}\label{lem: extending maximal ideals}
Assume that $D$ is one-dimensional. Let $M \in \Max(D)$. Then, there exists a maximal ideal of $A$ containing $M A$.
\end{Lem}
\begin{proof}
Consider the localization $D_M$ and the $D_M$-algebra $A_M$. The ring $A_M$ cannot be simple, since then $d A_M =A_M$ for each $d \in D \setminus\{0\}$. This implies that $K = A_M \cap K$, and consequently $D \subsetneqq A \cap K$. So, $A_M$ contains a nonzero maximal ideal $\mfM$. The ideal $\mfM \cap D_M$ is maximal in $D_M$ by Lemma \ref{lem: max spectrum}(2), so we have $M D_M = \mfM \cap D_M$. It follows that the contraction of $\mfM$ to $A$ is a proper ideal of $A$ that contains $M$.
\end{proof}

\begin{Prop}\label{prop: when J(D)=0}
Assume that $B$ is a division ring.
\begin{enumerate}[(1)]
\item If $\mfI$ is a two-sided ideal of $A$ such that $\mfI \cap D = \{0\}$, then $\mfI = \{0\}$.

\item If $D$ is one-dimensional and semiprimitive, then the intersection of all the maximal ideals of $A$ is $\{0\}$. Consequently, $A$ is semiprimitive.
\end{enumerate}
\end{Prop}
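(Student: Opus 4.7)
For part (1), the key observation is that in a division ring every nonzero element is a unit, which together with the fact that $B = (D\setminus\{0\})^{-1}A$ will let me produce a nonzero element of $D$ inside $\mfI$. Concretely, I would take any nonzero $x \in \mfI \subseteq A \subseteq B$, note that $x^{-1}$ exists in $B$, and write $x^{-1} = c/d$ with $c \in A$ and $d \in D \setminus \{0\}$. The identity $x \cdot x^{-1} = 1$ then rearranges to the equation $xc = d$ among elements of $A$. Since $\mfI$ is a right ideal of $A$, $xc \in \mfI$, hence $d = xc \in \mfI \cap D$, forcing $d = 0$---a contradiction. So (1) reduces to essentially one line once the correct expression for $x^{-1}$ is obtained.

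For part (2), let $J = \bigcap_{\mfm \in \Max(A)}\mfm$ denote the Jacobson radical of $A$. By (1), it suffices to show $J \cap D = \{0\}$, and since $D$ is semiprimitive, this in turn reduces to showing $J \cap D \subseteq \bigcap_{M \in \Max(D)} M = \msJ(D)$. The plan is to establish, for every $M \in \Max(D)$, that there is an $\mfm \in \Max(A)$ lying over $M$. Given $d \in J \cap D$ and an arbitrary $M \in \Max(D)$, Lemma \ref{lem: extending maximal ideals}(2) provides a maximal ideal $\mfm$ of $A$ containing $MA$; Lemma \ref{lem: extending maximal ideals}(1) then says $\mfm \cap D$ is maximal in $D$, and since it contains the maximal ideal $M$ we must have $M = \mfm \cap D$. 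Since $d \in J \subseteq \mfm$ and $d \in D$, it follows that $d \in M$. As $M$ was arbitrary, $d \in \msJ(D) = \{0\}$, and then part (1) upgrades this to $J = \{0\}$.

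There is no real obstacle here, only some bookkeeping: in (1) one needs to verify that the product $x \cdot c$, a priori computed in $B$, actually equals $d$ as an element of $A$, which is automatic because $A$ embeds into $B$ and both $x$ and $c$ already belong to $A$. The one-dimensionality and semiprimitivity hypotheses in (2) enter in precisely two places---one-dimensionality is used exactly to invoke Lemma \ref{lem: extending maximal ideals}, and semiprimitivity is used exactly to conclude $\msJ(D)=\{0\}$---so the structure of the argument mirrors the structure of the hypotheses.
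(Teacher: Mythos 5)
Your proof is correct and follows essentially the same route as the paper: part (1) by clearing denominators in $B=(D\setminus\{0\})^{-1}A$ to force a nonzero element of $D$ into $\mfI$, and part (2) by combining Lemma \ref{lem: extending maximal ideals} with the semiprimitivity of $D$. One small caution on terminology: for a noncommutative ring, $\msJ(A)$ is the intersection of the maximal \emph{left} (equivalently right) ideals, not of the maximal two-sided ideals, so your $J=\bigcap_{\mfm\in\Max(A)}\mfm$ is in general only an upper bound for $\msJ(A)$; this does not affect your argument, since showing $J=\{0\}$ immediately yields $\msJ(A)\subseteq J=\{0\}$, which is precisely what is claimed and how the paper states it.
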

\begin{proof}
(1) Assume that $\mfI$ is a two-sided ideal of $A$ with $\mfI \cap D = \{0\}$. Let $U = D\setminus\{0\}$. Since $B=U^{-1}A$ is a division ring, the extended ideal $U^{-1}\mfI$ of $B$ is equal to either $\{0\}$ or $B$. If $U^{-1}\mfI = B$, then $1=d^{-1} d$ for some nonzero $d \in \mfI \cap D$, which is a contradiction. Thus, $U^{-1}\mfI = \{0\}$, and this implies that $\mfI = \{0\}$.

(2) Assume that $D$ is semiprimitive and let $\mfI = \bigcap_{\mfm \in \Max(A)} \mfm$. Let $d \in \mfI \cap D$. Using Lemma \ref{lem: extending maximal ideals} and the fact that $D$ is commutative and semiprimitive,
\begin{equation*}
d \in \bigcap_{\mfm \in \Max(A)} (\mfm \cap D) = \bigcap_{M \in \Max(D)} M = \{0\}.
\end{equation*}
Thus, $\mfI \cap D = \{0\}$, and so $\mfI=\{0\}$ by (1). Finally, $A$ is semiprimitive because $\msJ(A) \subseteq \mfI$.
\end{proof}

\begin{Prop}\label{prop: when A is a direct product}
Assume that $B = \prod_{i=1}^t \mcD_i$, where each $\mcD_i$ is a division ring. For each $1 \leq i \leq t$, let $e_i = 1_{\mcD_i}$ and let $D_i = De_i \cong D$. Assume also that $A = \prod_{i=1}^t A_i$, where each $A_i$ is a $D_i$-subalgebra of $\mcD_i$.  If $D$ is one-dimensional and semiprimitive, then the intersection of all the maximal ideals of $A$ is $\{0\}$. Consequently, $A$ is semiprimitive.
\end{Prop}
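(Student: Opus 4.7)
The plan is to reduce to Proposition \ref{prop: when J(D)=0}(2), applied separately to each triple $(D_i, A_i, \mcD_i)$. Since $A \cong \prod_{i=1}^t A_i$ as rings, every maximal two-sided ideal of $A$ has the form $A_1 \times \cdots \times \tilde{\mfm}_i \times \cdots \times A_t$ for some $i$ and some $\tilde{\mfm}_i \in \Max(A_i)$; hence
\begin{equation*}
\bigcap_{\mfm \in \Max(A)} \mfm \;=\; \prod_{i=1}^t \Bigl(\,\bigcap_{\tilde{\mfm}_i \in \Max(A_i)} \tilde{\mfm}_i\,\Bigr),
\end{equation*}
and it suffices to show each $A_i$ is semiprimitive. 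Fix $i$. Via $D_i = De_i \cong D$, the ring $D_i$ is a one-dimensional semiprimitive integrally closed domain properly contained in its quotient field $K_i = Ke_i$; the $D_i$-module $A_i = Ae_i$ is a direct summand of the $D$-module $A$, so it is torsion-free and finitely generated over $D_i$; and inverting $D_i \setminus \{0\}$ in $A_i$ produces $\mcD_i = Be_i$, a division ring.

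The point that requires checking is that Lemma \ref{lem: extending maximal ideals} can be applied to the pair $(D_i, A_i)$, since the standing hypothesis $A \cap K = D$ does not automatically yield $A_i \cap K_i = D_i$ (consider, for example, $D = \Z$ and $A = \Z[\tfrac{1}{2}] \times \Z \subseteq \Q \times \Q$). For part (1) of the lemma, given $\tilde{\mfm}_i \in \Max(A_i)$, the quotient $A_i/\tilde{\mfm}_i$ is a simple ring that is finitely generated, and hence integral, over the image of $D_i$. If $\tilde{\mfm}_i \cap D_i$ were $\{0\}$, then $D_i$ would embed into $A_i/\tilde{\mfm}_i$, and since $D_i$ is central it would in fact embed into the center $Z(A_i/\tilde{\mfm}_i)$, which is a field. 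The extension $D_i \subseteq Z(A_i/\tilde{\mfm}_i)$ is integral, and a domain integral in a field is itself a field, forcing $D_i$ to be a field and contradicting $D_i \subsetneq K_i$. Thus $\tilde{\mfm}_i \cap D_i$ is a nonzero prime of the one-dimensional domain $D_i$, hence maximal. For part (2), given $M \in \Max(D_i)$, Nakayama's lemma applied to the nonzero finitely generated $D_{i,M}$-module $A_{i,M}$ gives $MA_{i,M} \subsetneq A_{i,M}$, so $MA_i$ is a proper two-sided ideal of $A_i$ and therefore lies in some maximal ideal of $A_i$.

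With Lemma \ref{lem: extending maximal ideals} available for $(D_i, A_i)$, the proof of Proposition \ref{prop: when J(D)=0}(2) transfers verbatim: surjectivity of the contraction map $\Max(A_i) \to \Max(D_i)$ together with the semiprimitivity of $D_i$ shows that $\bigcap_{\tilde{\mfm}_i \in \Max(A_i)} \tilde{\mfm}_i$ meets $D_i$ in $\{0\}$, and then Proposition \ref{prop: when J(D)=0}(1) (whose proof uses only that $\mcD_i$ is a division ring and $A_i$ is $D_i$-torsion-free) forces this intersection to be zero. Combining over $i$ yields $\bigcap_{\mfm \in \Max(A)} \mfm = \{0\}$, and consequently $A$ is semiprimitive. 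The main subtlety is the one flagged above: adapting the extension-of-maximal-ideals lemma to each factor without the hypothesis $A_i \cap K_i = D_i$, which is handled by the Nakayama argument.
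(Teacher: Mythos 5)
Your proof is correct and follows the same overall route as the paper: decompose $\Max(A)$ according to the direct-product structure and apply Proposition \ref{prop: when J(D)=0}(2) to each triple $(D_i, A_i, \mcD_i)$. However, the subtlety you flag is not actually a subtlety. Your proposed counterexample $D=\Z$, $A=\Z[\tfrac{1}{2}]\times\Z$ violates the standing hypothesis that $A$ is finitely generated as a $D$-module, since $\Z[\tfrac{1}{2}]$ is not a finitely generated $\Z$-module. More to the point, the condition $A_i \cap K_i = D_i$ holds automatically: $A_i = Ae_i$ is a direct summand of $A$ and hence finitely generated over $D_i$, so every element of $A_i$ is integral over $D_i$; thus $A_i \cap K_i$ is a subring of $K_i$ integral over $D_i$, and since $D_i \cong D$ is integrally closed in $K_i \cong K$, we get $A_i \cap K_i = D_i$. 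Consequently Lemma \ref{lem: extending maximal ideals} applies to $(D_i, A_i)$ as stated, and the Nakayama detour you supply, while perfectly valid (and arguably a cleaner proof of Lemma \ref{lem: extending maximal ideals}(2) than the one in the paper), is not needed to close a gap.
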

\begin{proof}
Let $\mfI = \bigcap_{\mfm \in \Max(A)} \mfm$. Since $A$ is a direct product of rings, we have $\mfI = \prod_{i=1}^t \mfI_i$, where each $\mfI_i$ is a two-sided ideal of $A_i$. We claim that 
\begin{equation*}\label{eq: max id intersection}
\mfI_i = \bigcap_{\mfm_i \in \Max(A_i)} \mfm_i
\end{equation*}
for each $i$. To see this, note that each maximal ideal $\mfm \subseteq A$ has the form $\mfm = \prod_{i=1}^t \mfa_i$, where $\mfa_j$ is a maximal ideal of $A_j$ for exactly one $j \in \{1, \ldots, t\}$, and $\mfa_i = A_i$ for $i \ne j$. Intersecting all of the maximal ideals of $A$ yields the desired decomposition of $\mfI$.

Now, if $D$ is semiprimitive, then so is each $D_i$. Hence, $\mfI_i = \{0\}$ for each $1 \leq i \leq t$ by Lemma \ref{prop: when J(D)=0}(2), and therefore $\mfI = \{0\}$. As in Proposition \ref{prop: when J(D)=0}, $A$ is semiprimitive because $\msJ(A) \subseteq \mfI$.
\end{proof}

\begin{Thm}\label{thm: D semiprimitive}
If $\Int_K(A)$ is \Pruf{} and $D$ is semiprimitive, then $A \cong \prod_{i=1}^t A_i$, where $A_i$ is the integral closure of $D$ in a finite field extension $F_i/K$. In particular, each $A_i$ is an \OurD-domain and $A$ is commutative.
\end{Thm}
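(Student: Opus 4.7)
The plan is to bootstrap Theorem \ref{thm: A is a direct product} into the stronger commutative conclusion by leveraging semiprimitivity. From that theorem, one already has $B \cong \prod_{i=1}^t \mcD_i$ (division rings over $K$) and $A \cong \prod_{i=1}^t A_i$ with each $A_i$ integrally closed in $\mcD_i$, and from Corollary \ref{cor: IntK(A) not Prufer v1} one knows that $D$ is almost Dedekind with finite residue fields (so in particular one-dimensional) and that every simple residue ring of $A$ is a finite field. The remaining work is to (a) upgrade each $A_i$ to a commutative ring, (b) identify $A_i$ as the integral closure of $D$ in a finite field extension $F_i/K$, and (c) verify the \OurD{} property of each $A_i$.

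For (a), I would invoke Proposition \ref{prop: when A is a direct product}, whose hypotheses match our setting: $D$ is one-dimensional and semiprimitive, and $A$ decomposes as a product of subdomains of division rings. The conclusion is that $A$ is semiprimitive; since the maximal ideals of $\prod A_j$ are precisely those of the form $A_1 \times \cdots \times \mfm \times \cdots \times A_t$ with $\mfm$ maximal in some factor $A_i$, this translates to $\bigcap_{\mfm \in \Max(A_i)} \mfm = \{0\}$ for each $i$. Each residue $A_i/\mfm$ is a simple residue ring of $A$, hence a finite field by Corollary \ref{cor: IntK(A) not Prufer v1}, so $A_i$ embeds into a product of finite fields and is therefore commutative. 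Then (b) follows formally: $\mcD_i = (D\setminus\{0\})^{-1} A_i$ is now a commutative division ring $F_i$ of finite degree over $K$, and combining $A_i = A_i'$ (from Theorem \ref{thm: A is a direct product}) with the integrality of $A$ over $D$ identifies $A_i$ with the integral closure of $D$ in $F_i$.

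For (c), I would verify the \OurD{} property directly. Fixing a prime $Q$ of $A_i$ and setting $P = Q \cap D$, the localization $(A_i)_Q$ is a localization of the integral closure of the DVR $D_P$ in the finite extension $F_i/K$; by Krull--Akizuki this integral closure is semilocal Dedekind, so $(A_i)_Q$ is a DVR whose residue field is a finite extension of the finite field $D/P$. Hence $A_i$ is almost Dedekind with finite residue fields. For the double-boundedness of $E_\mfp$ and $F_\mfp$ over $A_i$, the multiplicativity $e_{Q,\mfp} = e_{Q,P} \cdot e_{P,\mfp}$ and $f_{Q,\mfp} = f_{Q,P} \cdot f_{P,\mfp}$ in the tower $D_0 \subset D \subset A_i$, combined with the universal bound $e_{Q,P} f_{Q,P} \leq [F_i:K]$, reduces the boundedness for $A_i$ to that for $D$. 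The main obstacle I expect is the commutativity step in (a), since it is where the semiprimitivity of $D$ and the finite-residue-ring condition on $A$ must be simultaneously exploited; once commutativity is established, the identification in (b) is formal and the \OurD{} verification in (c) is routine multiplicative bookkeeping.
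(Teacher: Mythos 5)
Your proposal is correct and follows essentially the same route as the paper: decompose $A$ via Theorem \ref{thm: A is a direct product}, use Proposition \ref{prop: when A is a direct product} to kill the intersection of maximal ideals, combine with Corollary \ref{cor: IntK(A) not Prufer v1} (finite field residue rings) to force commutativity of each factor, and then identify each $A_i$ as the integral closure of $D$ in $F_i$. The only cosmetic difference is that you argue commutativity factorwise and spell out the \OurD{} verification for $A_i$ directly, whereas the paper argues commutativity at the level of $A$ and delegates the \OurD{} claim to \cite[Theorem 36.1(a)]{Gilm}; both are sound.
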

\begin{proof}
Assume $\Int_K(A)$ is \Pruf{}. First, we prove that $A$ is a commutative ring. Let $\mfI$ be the intersection of all the maximal ideals of $A$. Since $\Int_K(A)$ is \Pruf{} and $D$ is semiprimitive, by Theorem \ref{thm: A is a direct product} and Proposition \ref{prop: when A is a direct product}, $\mfI = \{0\}$. Moreover, by Corollary \ref{cor: IntK(A) not Prufer v1}, $A/\mfm$ is a finite field for each $\mfm \in \Max(A)$. So, for all $a, b \in A$ and all $\mfm \in \Max(A)$, we have $ab-ba \in \mfm$. Hence, $ab-ba \in \mfI$, and therefore $ab=ba$.

Next, apply Theorem \ref{thm: A is a direct product} and express $B$ and $A$ as $B \cong \prod_{i=1}^t \mcD_i$ and $A \cong \prod_{i=1}^t A_i$, where each $\mcD_i$ is a division ring and each $A_i$ is an integrally closed domain. For each $1 \leq i \leq t$, let $e_i = 1_{\mcD_i}$, $D_i = De_i \cong D$, and $K_i = Ke_i \cong K$. Then, $\mcD_i = (D_i \setminus \{0\})^{-1} A_i$ for each $i$. Since $A$ is commutative, so is each $A_i$. Thus, each $\mcD_i$ is a field $F_i$, and $[F_i:K_i]$ is finite because $\dim_K(B)$ is finite. 

Since $\Int(D)$ is \Pruf{}, by Theorem \ref{thm: loper}, $D$---and hence each $D_i$---is an \OurD-domain. From Theorem \ref{thm: A is a direct product}, we know that $A=A'$ and $A_i=A_i'$ for each $i$. This means that $A_i$ is the integral closure of $D_i$ in $F_i$, and is itself an \OurD{}-domain by \cite[Theorem 36.1(a)]{Gilm}.
\end{proof}

If $D$ is not semiprimitive, then there exist examples of noncommutative rings $A$ such that $\Int_K(A)$ is \Pruf{}; see Theorem \ref{thm: Z2 Hurwitz}. However, the following partial converse of Theorem \ref{thm: D semiprimitive} is true.

\begin{Thm}\label{thm: commutative converse}
Let $D$ be an \OurD-domain. Let $A \cong \prod_{i=1}^t A_i$, where $A_i$ is the integral closure of $D$ in a finite field extension $F_i/K$. Then $\Int_K(A)$ is \Pruf{}. 
\end{Thm}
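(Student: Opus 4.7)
My plan is to reduce the theorem to the forward-referenced Corollary~\ref{cor: Lambda_n is Prufer}, which (once established in Section~\ref{sec: sufficient}) asserts that $\Int_K(A,A')$ is Pr\"ufer whenever $D$ is an \OurD{}-domain. The bulk of the work is contained in that corollary; here I only need to show that $A = A'$, since then Definition~\ref{def: Int_K(A,A')} immediately gives $\Int_K(A) = \Int_K(A,A')$.

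To verify $A = A'$, I would exploit the product structure. Write $B = \prod_{i=1}^t F_i$, so that $K$ embeds diagonally and any polynomial $f \in K[X]$ evaluates componentwise as $f(b_1,\ldots,b_t) = (f(b_1),\ldots,f(b_t))$. Given $b = (b_1,\ldots,b_t) \in B$ integral over $D$, pick a monic $f \in D[X]$ with $f(b) = 0$; then $f(b_i) = 0$ in $F_i$ for each $i$, so each $b_i$ is integral over $D$ and thus belongs to $A_i$ by hypothesis. Hence $b \in \prod_i A_i = A$, which gives $A' \subseteq A$; the reverse inclusion is automatic since $A$ is integral over $D$.

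With $A = A'$ in hand, I would invoke Corollary~\ref{cor: Lambda_n is Prufer}, using also the fact that each $A_i$ is itself an \OurD{}-domain by \cite[Theorem~36.1(a)]{Gilm}, which matches the description $\Int_K(A,A') = \Int_K(\Omega_A, \Lambda_n)$ provided by Theorem~\ref{thm: Int_K(A,A')}. The conclusion is that $\Int_K(A) = \Int_K(A,A')$ is a Pr\"ufer domain.

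The hard part is not inside this proof at all, but is deferred to Section~\ref{sec: sufficient}: one must construct a Pr\"ufer subring of $K[X]$ that sits inside $\Int_K(\Omega_A,\Lambda_n)$. That construction rests on the \OurD{} hypothesis on $D$ together with a uniform analysis of how algebraic elements of $\oK$ of bounded degree over $K$ distribute under the valuations extending those of $D$. Once that is available, the reduction here, via the identity $A = A'$, is immediate.
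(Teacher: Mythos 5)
Your proof is correct, but it takes a genuinely different route from the paper's. You reduce the statement to Corollary~\ref{cor: Lambda_n is Prufer} via the chain $A=A' \Rightarrow \Int_K(A)=\Int_K(A,A') \Rightarrow \Int_K(A)$ Pr\"ufer; the componentwise verification that $A'\subseteq A$ is sound (a monic witness $f\in D[X]$ for $b=(b_1,\ldots,b_t)$ restricts to a monic witness for each $b_i$, and $A_i$ is integrally closed in $F_i$ by hypothesis). This is essentially the implication (iii)$\Rightarrow$(ii)$\Rightarrow$(i) later used in the proof of Theorem~\ref{thm: main}(1), so it is consistent with the paper's global architecture; however, it relies on a forward reference to the heavy machinery of Section~\ref{sec: sufficient}. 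The paper instead gives a self-contained proof inside Section~\ref{sec: necessary} that bypasses the $\Int_K(\Lambda_n)$ construction entirely: it passes to a finite Galois closure $F$ of the compositum of the $F_i$, identifies $\Int_K(A)$ with $\Int_K(\bigcup_i G(A_i), D_F)$ where $D_F$ is the integral closure of $D$ in $F$, observes that $\Int_F(\bigcup_i G(A_i), D_F)$ is an overring of the Pr\"ufer domain $\Int(D_F)$ (applying Loper's Theorem~\ref{thm: loper} to the \OurD{}-domain $D_F$) and hence Pr\"ufer, and then descends to $K(X)$ via \cite[Theorem 22.4]{Gilm} together with \cite[Lemma 2.20]{PerDedekind}. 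The paper's approach buys independence from Section~\ref{sec: sufficient}, showing that this particular sufficient condition can be established with only classical Galois-theoretic tools and Loper's theorem; your approach is more economical once Corollary~\ref{cor: Lambda_n is Prufer} is available and makes the logical dependence on $A=A'$ transparent, at the cost of reading Section~\ref{sec: sufficient} first.
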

\begin{proof}
Note that each $A_i$ is an \OurD-domain \cite[Theorem 36.1(a)]{Gilm}. Let $F$ be a finite Galois extension of $K$ containing $F_i$ for $i=1,\ldots, t$ and let $D_F$ be the integral closure of $D$ in $F$. Note that $D_F$ is also an \OurD-domain. So, $\Int(D_F)$ is \Pruf{} by Theorem \ref{thm: loper}.

Let $G=\Gal(F/K)$, let $i\in\{1,\ldots,t\}$, and let $G(A_i)= \bigcup_{\sigma \in G} \sigma(A_i)$. It is easy to show that $G(A_i)=\Omega_{A_i}$ and $\Int_K(A_i) = \Int_K(G(A_i),\oD) = \Int_K(G(A_i),D_F)$, and this last ring is integrally closed by Theorem \ref{thm: Int_K(A,A')}. Hence, we have
\begin{equation*}
\Int_K(A) = \bigcap_{i=1}^n \Int_K(A_i) = \bigcap_{i=1}^n \Int_K(G(A_i), D_F) = \Int_K(\bigcup_{i=1}^n G(A_i), D_F).
\end{equation*}
Now, by \cite[Lemma 2.20]{PerDedekind}, $\Int_F(\bigcup_{i=1}^n G(A_i), D_F)$ is the integral closure of $\Int_K(\bigcup_{i=1}^n G(A_i), D_F)$ in $F(X)$. Since $\Int(D_F) \subseteq \Int_F(\bigcup_{i=1}^n G(A_i), D_F)$ and the former ring is a \Pruf{} domain, it follows that $\Int_F(\bigcup_{i=1}^n G(A_i), D_F)$ is also \Pruf{}. By \cite[Theorem 22.4]{Gilm}, $\Int_K(\bigcup_{i=1}^n G(A_i), D_F)$ is \Pruf{}, as we wanted to prove.  
\end{proof}

\begin{Cor}\label{cor: D semiprimitive}
Let $D$ be a semiprimitive \OurD-domain. Then, $\Int_K(A)$ is \Pruf{} if and only if $A \cong \prod_{i=1}^t A_i$, where $A_i$ is the integral closure of $D$ in a finite field extension $F_i/K$.
\end{Cor}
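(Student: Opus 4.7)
The statement is essentially a packaging of two results already proven in the excerpt, so my plan is to combine them directly rather than introduce new machinery.

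For the forward implication, I would simply invoke Theorem~\ref{thm: D semiprimitive}. That theorem assumes exactly that $D$ is semiprimitive and $\Int_K(A)$ is \Pruf{}, and it produces the decomposition $A \cong \prod_{i=1}^t A_i$ with each $A_i$ the integral closure of $D$ in a finite field extension $F_i/K$. The hypothesis that $D$ is an \OurD-domain is stronger than what Theorem~\ref{thm: D semiprimitive} literally needs, so nothing extra has to be verified.

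For the reverse implication, I would apply Theorem~\ref{thm: commutative converse}. That theorem takes an \OurD-domain $D$ and an algebra $A \cong \prod_{i=1}^t A_i$ of the stated form and concludes that $\Int_K(A)$ is \Pruf{}. Since our $D$ is assumed to be a semiprimitive \OurD-domain, it is in particular an \OurD-domain, so the hypotheses of Theorem~\ref{thm: commutative converse} are met and the conclusion follows.

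There is no genuine obstacle here because the work has been done upstream: Theorem~\ref{thm: D semiprimitive} extracts the commutative direct-product structure by combining Theorem~\ref{thm: A is a direct product} (which gives the decomposition into division rings), Corollary~\ref{cor: IntK(A) not Prufer v1} (which forces simple residue rings to be fields), and Proposition~\ref{prop: when A is a direct product} (semiprimitivity of $A$ forcing commutativity), while Theorem~\ref{thm: commutative converse} supplies the converse via a Galois-closure argument reducing to the classical Prüfer result of Loper. The corollary is therefore just the biconditional packaging: one sentence citing Theorem~\ref{thm: D semiprimitive} for $(\Rightarrow)$ and one sentence citing Theorem~\ref{thm: commutative converse} for $(\Leftarrow)$.
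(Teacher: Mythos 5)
Your proposal is correct and matches the paper's (implicit) approach exactly: the corollary appears immediately after Theorem~\ref{thm: commutative converse} with no written proof precisely because it is the direct conjunction of Theorem~\ref{thm: D semiprimitive} for the forward implication and Theorem~\ref{thm: commutative converse} for the converse. Your side remark that Theorem~\ref{thm: D semiprimitive} does not literally need the \OurD{} hypothesis is also accurate, since its proof derives that property from the assumption that $\Int_K(A)$ (hence $\Int(D)$) is \Pruf{} via Theorem~\ref{thm: loper}.
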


\section{Sufficient conditions}\label{sec: sufficient}

In this section, we complete the proof of Theorem \ref{thm: main} by showing that if $D$ is an \OurD-domain and $A=A'$, then $\Int_K(A)$ is \Pruf{}. This is accomplished by constructing a family of \Pruf{} domains within $K[X]$, one of which must be a subring of $\Int_K(A,A') := \{f \in K[X] \mid f(A) \subseteq A'\}$ (which is equal to $\Int_K(A)$ if $A=A'$). To construct these \Pruf{} domains, we generalize \cite[Section 2]{LopWer}, which dealt with a similar problem for $D = \Z$.

Recall from the introduction that $\oK$ denotes a fixed algebraic closure of $K$, and from Definition \ref{def: Int_K(A,A')} that for each $n \geq 1$, $\Lambda_n$ is the set of elements of $\oK$ that are integral over $D$ of degree at most $n$. For $\msE \subseteq \Lambda_n$, define $\Int_K(\msE, \Lambda_n) = \{f \in K[X] \mid f(\msE) \subseteq \Lambda_n\}$. As usual, we let $\Int_K(\Lambda_n) := \Int_K(\Lambda_n, \Lambda_n)$.

It is shown in \cite[Prop.\ 7]{PerWer} that for any $\msE \subseteq \Lambda_n$, $\Int_K(\msE, \Lambda_n)$ is an integrally closed subring of $K[X]$. We wish to prove that $\Int_K(\Lambda_n)$ is a \Pruf{} domain for each $n \geq 1$. To do this, we provide a broad construction that produces a \Pruf{} domain within $K[X]$.

\subsection{Building a Pr\"{u}fer domain inside $K[X]$}

\begin{Constr}\label{Building D}
Let $D$ be an almost Dedekind domain with finite residue fields. As mentioned in Notation \ref{D_0 notations}, $D$ contains a principal ideal domain $D_0$ and we use $\mfp$ to denote a nonzero prime element of $D_0$. 

For each nonzero prime $P$ of $D$, let $I_P$ be an index set and let $\{V_{P,i} \mid i \in I_P\}$ be a collection of valuation overrings of $D[X]$ such that for each $i \in I_P$, the valuation $v_{P,i}$ of $K(X)$ extends $v_P$, and $V_{P,i}$ has maximal ideal $M_{P,i}$. Let $\mfp$ be the prime of $D_0$ below $P$. Assume that the following two conditions hold:
\begin{enumerate}[(i)]
\item there exists $e_\mfp \geq 1$ such that for all $P|\mfp D_0$ and all $i \in I_P$, $M_{P,i}^{e_\mfp} \subseteq \mfp V_{P,i}$, and
\item there exists $f_\mfp \geq 1$ such that for all $P|\mfp D_0$ and all $i \in I_P$, $[(V_{P,i}/M_{P,i}) : (D_0/\mfp D_0)] \leq f_\mfp$.
\end{enumerate}
Let $\Spec(D)$ and $\Max(D)$ be, respectively, the sets of prime and maximal ideals of $D$. We then form the following subring of $K[X]$:
\begin{equation*}
\msD := \bigcap_{P\in\Max(D)}  \Big( \; \bigcap_{i \in I_P} V_{P,i} \Big) \cap K[X].
\end{equation*}
\end{Constr}

\begin{Rem}\label{rem: M_i}
Since $D$ is assumed to be an almost Dedekind domain, each valuation $v_P$ is discrete. The existence of $e_\mfp$ implies that for each $P|\mfp D_0$ and each $i \in I_P$, there exists a positive integer $\lambda_{P,i}\leq e_{\mfp}$ such that $M_{P,i}^{\lambda_{P,i}} = \mfp V_{P,i}$. Indeed, the containments $M_{P,i}^{e_\mfp} \subseteq \mfp V_{P,i}\subseteq M_{P,i}$ imply that $\mfp V_{P,i}$ is $M_{P,i}$-primary, so by \cite[Theorem 17.3(b)]{Gilm} $\mfp V_{P,i}$ is equal to a power $M_{P,i}^{\lambda_{P,i}}$ of the maximal ideal $M_{P,i}$ and necessarily $\lambda_{P,i}\leq e_{\mfp}$. 
\end{Rem}

For the remainder of this section, assume that $\msD$ is a domain formed via Construction \ref{Building D}. We will prove that $\msD_\mfM$ is a valuation domain for each maximal ideal $\mfM$ of $\msD$, and hence that $\msD$ is a Pr\"{u}fer domain. The ideals of $\msD$ fall into two types. An ideal $\mfI \subseteq \msD$ is \textit{unitary} if $\mfI \cap D \ne \{0\}$, and is \textit{non-unitary} otherwise. The non-unitary maximal ideals of $\msD$ can be dealt with quite quickly.

\begin{Lem}\label{lem: non-unitary}
If $\mfM$ is a non-unitary maximal ideal of $\msD$, then $\msD_\mfM$ is a valuation domain.
\end{Lem}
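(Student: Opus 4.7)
The plan is to show that when $\mfM$ is non-unitary, $\msD_\mfM$ is actually a local overring of the PID $K[X]$ inside $K(X)$, whence it must be a localization of $K[X]$ at a prime and therefore a valuation domain (a DVR or the field $K(X)$).

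First, I would observe that $D[X] \subseteq \msD$. Indeed, each $V_{P,i}$ is by assumption a valuation overring of $D[X]$, so $D[X]$ sits inside every factor of the intersection defining $\msD$. Next, I would use the non-unitary hypothesis: $\mfM \cap D = \{0\}$, so every nonzero $d \in D$ lies in $\msD \setminus \mfM$ and thus becomes a unit in $\msD_\mfM$. Consequently $K \subseteq \msD_\mfM$, and combined with $D[X] \subseteq \msD \subseteq \msD_\mfM$ this gives
\[
K[X] \;=\; (D\setminus\{0\})^{-1} D[X] \;\subseteq\; \msD_\mfM.
\]
On the other hand, $\msD \subseteq K[X]$ implies $\msD_\mfM \subseteq K(X)$, so $\msD_\mfM$ is an overring of $K[X]$ contained in $K(X)$.

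Now I would invoke the classical fact that $K[X]$ is a PID, and every overring of a Dedekind domain in its quotient field is a localization at a multiplicatively closed set (equivalently, an intersection of localizations at primes; see, e.g., \cite[Theorem 6.13]{CaCh} or standard Pr\"ufer/Dedekind theory). Since $\msD_\mfM$ is local, the multiplicative set must be the complement of a single prime of $K[X]$. Hence $\msD_\mfM$ is either $K(X)$ itself or of the form $K[X]_{(p(X))}$ for some irreducible $p(X) \in K[X]$; in both cases it is a valuation domain.

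I do not expect a real obstacle here: the argument is essentially that non-unitary primes of $\msD$ lie over the generic point of $\Spec(D)$, forcing the localization to be contained in the generic fiber, which is nothing but (a localization of) $K[X]$. The only point to double-check is that the inclusions and localization arguments are all carried out inside the common ambient field $K(X)$, which is immediate since $\msD$ is a domain and all the $V_{P,i}$ sit in $K(X)$.
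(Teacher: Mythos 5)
Your proof is correct and follows essentially the same path as the paper: the key observation in both is that $K[X] \subseteq \msD_\mfM$ (because every nonzero $d \in D$ lies outside $\mfM$ and hence becomes a unit), after which one concludes from $\msD_\mfM$ being a local overring of $K[X]$ inside $K(X)$. The paper phrases the final step by invoking that a local overring of a Pr\"ufer domain is a valuation domain, whereas you appeal to the classification of overrings of the PID $K[X]$; these are two formulations of the same fact, and your version just unwinds it a bit more explicitly.

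One small caution: your parenthetical equates ``localization at a multiplicatively closed set'' with ``intersection of localizations at primes'' for Dedekind domains in general, which is not quite right (it fails when the class group is nontrivial). What you actually need is only that this holds for the PID $K[X]$, which it does, so the argument stands; but you may want to phrase the cited fact more carefully or simply use the Pr\"ufer formulation as the paper does.
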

\begin{proof}
When $\mfM$ is non-unitary and maximal, we have $K[X] \subseteq \msD_\mfM$. Thus, $\msD_\mfM$ is a local overring of the Pr\"{u}fer domain $K[X]$, and hence is a valuation domain.
\end{proof}

Working with the unitary maximal ideals of $\msD$ is more complicated. When $\mfM$ is unitary, its contraction to $D$ is a nonzero prime $P \subseteq D$, and $P$ itself lies above a prime element $\mfp$ of $D_0$.

\begin{Not}\label{r_p and r_p}
Recall the definitions of the positive integers $e_\mfp$ and $f_\mfp$ from Construction \ref{Building D}. For each $\mfp$, let $s_\mfp = e_\mfp !$ and $r_\mfp = |D_0 / \mfp D_0|^{f_\mfp !}$. Note that $r_\mfp \geq 2$.
\end{Not}

\begin{Prop}\label{prop: polynomials in D and  M}\mbox{}
\begin{enumerate}[(1)]
\item For each $\mfp$ and each $f \in \msD$, $\dfrac{(f^{r_\mfp} - f)^{s_\mfp}}{\mfp} \in \msD$.
\item Let $\mfM$ be a unitary prime ideal of $\msD$ above $\mfp$. Then, $f^{r_\mfp} - f \in \mfM$ for all $f \in \msD$.
\end{enumerate}
\end{Prop}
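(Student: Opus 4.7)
The plan is to verify the containment in (1) place by place, checking it in each valuation overring $V_{P,i}$ that appears in the intersection defining $\msD$, and then to deduce (2) formally from (1) using that $\mfM$ is prime.

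For (1), fix $f \in \msD$ and a prime $\mfp$ of $D_0$, and take an arbitrary $V_{P,i}$ in the intersection defining $\msD$. I would split into two cases. If $P \nmid \mfp D_0$, then $\mfp \notin P$, so $\mfp$ is a unit in $D_P \subseteq V_{P,i}$; hence $1/\mfp \in V_{P,i}$ and $(f^{r_\mfp}-f)^{s_\mfp}/\mfp \in V_{P,i}$ is automatic. If $P \mid \mfp D_0$, set $q_0 = |D_0/\mfp D_0|$. The residue field $V_{P,i}/M_{P,i}$ is an extension of $D_0/\mfp D_0$ of some degree $f_{P,i} \leq f_\mfp$, so it has $q_0^{f_{P,i}}$ elements, and its multiplicative group has order $q_0^{f_{P,i}}-1$. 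Since $f_{P,i}$ divides $f_\mfp!$, the integer $q_0^{f_{P,i}}-1$ divides $q_0^{f_\mfp!}-1 = r_\mfp - 1$. Therefore every element $\bar g$ of the residue field satisfies $\bar g^{\,r_\mfp} = \bar g$, which gives $f^{r_\mfp} - f \in M_{P,i}$, i.e.\ $v_{P,i}(f^{r_\mfp}-f) \geq 1$. Using Remark \ref{rem: M_i}, write $\mfp V_{P,i} = M_{P,i}^{\lambda_{P,i}}$ with $\lambda_{P,i} \leq e_\mfp$; then
\begin{equation*}
v_{P,i}\bigl((f^{r_\mfp}-f)^{s_\mfp}\bigr) \;\geq\; s_\mfp \;=\; e_\mfp! \;\geq\; e_\mfp \;\geq\; \lambda_{P,i} \;=\; v_{P,i}(\mfp),
\end{equation*}
so $(f^{r_\mfp}-f)^{s_\mfp}/\mfp \in V_{P,i}$. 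Since $(f^{r_\mfp}-f)^{s_\mfp}/\mfp$ also lies in $K[X]$, intersecting over all $P$ and all $i \in I_P$ yields membership in $\msD$.

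For (2), by hypothesis $\mfM \cap D_0 = \mfp D_0$, so $\mfp \in \mfM$. By part (1), $(f^{r_\mfp}-f)^{s_\mfp} = \mfp \cdot h$ for some $h \in \msD$, hence $(f^{r_\mfp}-f)^{s_\mfp} \in \mfp \msD \subseteq \mfM$; primality of $\mfM$ then forces $f^{r_\mfp} - f \in \mfM$.

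The only place that requires genuine care is the divisibility check in the $P \mid \mfp D_0$ case — ensuring that the congruence $\bar g^{\,r_\mfp} = \bar g$ holds uniformly for every residue field that appears, which is exactly why the factorials enter into the definitions of $s_\mfp = e_\mfp!$ and $r_\mfp = q_0^{f_\mfp!}$. Once that combinatorial point is in place, the rest is a routine valuation-by-valuation bookkeeping.
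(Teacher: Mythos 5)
Your proof is correct and follows essentially the same strategy as the paper: verify the containment $V_{P,i}$-by-$V_{P,i}$, splitting on whether $P$ lies over $\mfp$, use the divisibility $f_{P,i}\mid f_\mfp!$ to get $f^{r_\mfp}-f\in M_{P,i}$, use $s_\mfp=e_\mfp!\geq e_\mfp$ to push the $s_\mfp$-th power into $\mfp V_{P,i}$, and deduce (2) from primality of $\mfM$. The only superficial difference is that you pass through Remark~\ref{rem: M_i} and a valuation count to show $(f^{r_\mfp}-f)^{s_\mfp}\in\mfp V_{P,i}$, whereas the paper uses the ideal containment $M_{P,i}^{s_\mfp}\subseteq M_{P,i}^{e_\mfp}\subseteq\mfp V_{P,i}$ supplied directly by condition~(i) of Construction~\ref{Building D}, which is marginally shorter but not a different idea.
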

\begin{proof}
For readability, let $r = r_\mfp$ and $s = s_\mfp$.

(1) First, we show that $(f^r -f)^s \in \mfp V_{P,i}$ for each $P|\mfp D_0$ and each $i \in I_P$. By construction, $[(V_{P,i}/M_{P,i}) : (D_0 / \mfp D_0)]$ divides $f_\mfp !$ and $e_\mfp \leq s$. So, $f^r - f \in M_{P,i}$, and then $(f^r -f)^s \in M_{P,i}^s \subseteq \mfp V_{P,i}$. Thus, $(f^r-f)^s/\mfp \in V_{P,i}$.

Next, if $P'$ is a nonzero prime of $D$ that is not above $\mfp$, then $\mfp$ is invertible in each valuation overring $V'$ of $D$ associated to $P'$. In this case, $(f^r-f)^s/\mfp \in V'$. We conclude that $(f^r-f)^s/\mfp \in \msD$.

(2) By part (1), $(f^r - f)^s \in \mfp \msD \subseteq \mfM$ for each $f \in \msD$. Consequently, $f^r -f \in \mfM$.
\end{proof}

\begin{Lem}\label{lem: might be false}
Let $f \in \msD$. For every $\mfp$, every $P|\mfp D_0$, and each $i \in I_P$, one of the following two conditions holds. Either
\begin{enumerate}[(i)]
\item $v_{P,i}\Big(\dfrac{f^{s_\mfp}}{\mfp^t}\Big) = 0$ for some $t \geq 0$, or
\item $v_{P,i}\Big(\dfrac{f^{s_\mfp}}{\mfp^t}\Big) > 0$ for all $t \geq 0$.
\end{enumerate}
\end{Lem}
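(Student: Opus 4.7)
The plan is to unwind the valuation directly. Setting $\alpha := v_{P,i}(f)$ and $\lambda := \lambda_{P,i} = v_{P,i}(\mfp)$, we obtain the identity $v_{P,i}(f^{s_\mfp}/\mfp^t) = s_\mfp \alpha - t\lambda$, which reduces the entire dichotomy to a question about this linear expression in $t$. By Remark \ref{rem: M_i}, $\lambda$ is a positive integer with $\lambda \le e_\mfp$, and since $f \in \msD \subseteq V_{P,i}$, either $f = 0$ or $\alpha \in \Z_{\ge 0}$.

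If $f = 0$, then $f^{s_\mfp}/\mfp^t = 0$ for every $t$, and its valuation is $+\infty > 0$, so condition (ii) holds trivially.

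If $f \ne 0$, the key point is the definition $s_\mfp = e_\mfp!$ from Notation \ref{r_p and r_p}: since $\lambda \le e_\mfp$, we have $\lambda \mid e_\mfp! = s_\mfp$, and therefore $\lambda \mid s_\mfp \alpha$. Setting $t := s_\mfp \alpha / \lambda$ yields a non-negative integer for which $v_{P,i}(f^{s_\mfp}/\mfp^t) = s_\mfp \alpha - t\lambda = 0$, so condition (i) holds.

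There is no substantive obstacle to overcome; the statement is essentially an accounting identity that confirms the factorial in $s_\mfp = e_\mfp!$ was introduced precisely to force the divisibility $\lambda_{P,i} \mid s_\mfp$ uniformly across all primes $P \mid \mfp D_0$ and all indices $i \in I_P$. This uniform divisibility is what guarantees that the integer expression $s_\mfp \alpha - t\lambda$ lands on zero exactly, rather than merely straddling it, whenever $\alpha$ is finite.
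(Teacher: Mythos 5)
There is a genuine gap: your argument presupposes that $v_{P,i}$ takes integer values on $\msD$, i.e.\ that $\alpha=v_{P,i}(f)\in\Z_{\geq 0}$ and $v_{P,i}(\mfp)=\lambda_{P,i}$. Neither is guaranteed. Since $v_{P,i}$ extends $v_P$, on $K$ it has value group $\Z$ and $v_{P,i}(\mfp)=e_{P,\mfp}$ (not $\lambda_{P,i}$, which is an ideal-theoretic exponent); but on $K(X)^*$ the value group is typically strictly larger than $\Z$. In the intended application (Theorem \ref{thm: Lambda_n}) the value groups of the $V_{P,\alpha}$ are of the form $\tfrac{1}{e}\Z$, so $v_{P,\alpha}(f)$ is in general a non-integral rational; worse, Construction \ref{Building D} even admits rank-two valuations. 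For instance, $V=\{\phi\in K[X]_{(X)}\mid \phi(0)\in D_P\}$ is a valuation overring of $D[X]$ with $M^{e}\subseteq\mfp V$ for $e\geq e_{P,\mfp}$ and finite residue field, hence admissible; here $X/\mfp^t\in M$ for every $t$, so $f=X$ satisfies alternative (ii) of the lemma, whereas your computation, treating $v(X)$ as a non-negative integer $\alpha$, would produce $t=s_\mfp\alpha/\lambda$ and wrongly conclude that (i) holds. So the reduction of the lemma to the "accounting identity" $s_\mfp\alpha-t\lambda$ with integer $\alpha,\lambda$ is not free; it is the actual content to be proved.

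The paper supplies the missing step as follows. If (ii) fails, then $\mfp^nV_{P,i}\subseteq f^{s_\mfp}V_{P,i}\subseteq M_{P,i}$ for some $n$, so $fV_{P,i}$ is $M_{P,i}$-primary and therefore equal to a power $M_{P,i}^{a}$ by \cite[Theorem 17.3(b)]{Gilm} (this is where the integer exponent enters), and only then does the divisibility $\lambda_{P,i}\mid s_\mfp$ --- which you correctly identify as the purpose of the factorial in $s_\mfp=e_\mfp!$ --- yield $f^{s_\mfp}V_{P,i}=M_{P,i}^{s_\mfp a}=\mfp^{t}V_{P,i}$ with $t=s_\mfp a/\lambda_{P,i}$. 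Your final divisibility step matches the paper's, but the step converting valuations of elements of $\msD$ into integer powers of $M_{P,i}$ is missing, and without it the dichotomy cannot be read off from a linear expression in $t$.
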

\begin{proof}
Fix $\mfp$, $P$, and $i$. Let $V=V_{P,i}$, $M=M_{P,i}$, $v=v_{P,i}$, and $s=s_\mfp$. Since $f \in \msD$, $v(f) \geq 0$. If $v(f) = 0$, then condition (i) holds with $t=0$. So, assume that $v(f) > 0$. This means that $f \in M$. If condition (ii) holds, then we are done. So, assume not. Then, there exists some $n\geq 1$ such that $\mfp^n V\subseteq f^s V$.   By the same argument of Remark \ref{rem: M_i}, $fV=M^a$, for some $a\geq1$, so $f^sV=M^{sa}$.  Recall from Remark \ref{rem: M_i} that $\mfp V = M^{\lambda}$ for some integer $\lambda \geq 1$. Note that $\lambda$ divides $s$ because $\lambda\leq e_{\mfp}$. Hence, there exists some $t\geq 1$ such that $sa=\lambda t$, so that $f^sV=M^{\lambda t}=\mfp^t V$. Hence, $v(f^s)=v(\mfp^t)$, which is (i).  
\end{proof}

\begin{Lem}\label{lem: new poly in D}
Let $f \in \msD$. Assume that for some $\mfp$, every $P|\mfp D_0$, and every $i \in I_P$, $f$ satisfies one of the following two conditions. Either
\begin{enumerate}[(i)]
\item $v_{P,i}\Big(\dfrac{f}{\mfp^t}\Big) = 0$ for some $t \geq 0$, or
\item $v_{P,i}\Big(\dfrac{f}{\mfp^t}\Big) > 0$ for all $t \geq 0$.
\end{enumerate}
Let $g = \dfrac{f(f^{r_{\mfp}-1} - 1)^{s_\mfp}}{\mfp} \in K[X]$. Then, $g$ also satisfies (i) or (ii), and $g \in \msD$.
\end{Lem}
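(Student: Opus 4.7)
The plan is to verify both assertions by working at one valuation at a time. Fix $P \in \Max(D)$ and $j \in I_P$, set $v = v_{P,j}$, and distinguish the cases $P \nmid \mfp D_0$ and $P \mid \mfp D_0$. If $P$ does not lie above $\mfp$, then $\mfp$ is a unit in $D_P$, so $v(\mfp) = 0$; since $f \in \msD$ gives $v(f) \geq 0$ and $v(f^{r_\mfp - 1} - 1) \geq 0$, we obtain $v(g) \geq 0$ for free. Condition (i) or (ii) for $g$ only has to be checked at primes above $\mfp$, so from now on fix $P \mid \mfp D_0$, $i \in I_P$, and write $M = M_{P,i}$, $\lambda = v(\mfp) > 0$ (positive by Remark \ref{rem: M_i}).

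The hypothesis on $f$ falls into three scenarios, and I would handle them separately. First suppose $v(f) = 0$, so $\bar f \in V_{P,i}/M$ is a unit; since $[V_{P,i}/M : D_0/\mfp D_0] \leq f_\mfp$, the order of $(V_{P,i}/M)^{\times}$ divides $|D_0/\mfp D_0|^{f_\mfp!} - 1 = r_\mfp - 1$, forcing $b := v(f^{r_\mfp - 1} - 1) \geq 1$. Now I would invoke Lemma \ref{lem: might be false} applied to the element $h := f^{r_\mfp - 1} - 1 \in \msD$: it says $s_\mfp b = v(h^{s_\mfp})$ is either equal to an integer multiple of $\lambda$ or strictly larger than every such multiple. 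In the first case, since $b \geq 1$ and $\lambda \mid s_\mfp$ (because $\lambda \leq e_\mfp$ divides $e_\mfp! = s_\mfp$), the value $v(g) = s_\mfp b - \lambda$ is a nonnegative integer multiple of $\lambda$, which gives (i) for $g$; in the second, $v(g) > (t-1)\lambda$ for all $t \geq 0$, which gives (ii) for $g$. Either way $v(g) \geq 0$, as needed.

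The remaining two scenarios are easier. If $v(f) = t\lambda$ with $t \geq 1$, then $f \in M$, so $f^{r_\mfp - 1} \in M$ and $f^{r_\mfp - 1} - 1$ is a unit in $V_{P,i}$; therefore $v((f^{r_\mfp - 1} - 1)^{s_\mfp}) = 0$, $v(g) = (t-1)\lambda \geq 0$, and $v(g/\mfp^{t-1}) = 0$, giving (i). If instead $f$ satisfies (ii), so $v(f) > t\lambda$ for all $t \geq 0$, then again $f \in M$ and the same computation gives $v(g) = v(f) - \lambda$, so $v(g/\mfp^{t'}) = v(f) - (t'+1)\lambda > 0$ for every $t' \geq 0$, establishing (ii) for $g$.

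The only delicate step is the first scenario, because the hypothesis tells us nothing directly about the valuation of $f^{r_\mfp - 1} - 1$, only about $f$; the trick is that $f^{r_\mfp - 1} - 1$ still lies in $\msD$, so Lemma \ref{lem: might be false} supplies exactly the dichotomy for $(f^{r_\mfp - 1} - 1)^{s_\mfp}$ that we need to transfer to $g$. Once that observation is in place, the rest of the proof is bookkeeping with the valuation, using only that $\lambda \leq e_\mfp \mid s_\mfp$ and that $(V_{P,i}/M_{P,i})^{\times}$ has exponent dividing $r_\mfp - 1$.
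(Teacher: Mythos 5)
Your proof is correct and takes essentially the same approach as the paper: you split into cases by $v_{P,i}(f)$, handle the $v(f)=0$ case via the multiplicative group of $V_{P,i}/M_{P,i}$ together with Lemma \ref{lem: might be false} applied to $f^{r_\mfp-1}-1$, and handle the $v(f)>0$ cases by observing that $f^{r_\mfp-1}-1$ is then a unit; the paper organizes this instead as ``$v(f)>0$ or $v(f^{r_\mfp-1}-1)>0$'' by citing Proposition \ref{prop: polynomials in D and  M}, but the content is the same. The only cosmetic issue is a slight conflation in your scenario 1 between the integer $\lambda_{P,i}$ of Remark \ref{rem: M_i} and the value $v_{P,i}(\mfp)$; the paper avoids this by instead noting directly that $(f^{r_\mfp-1}-1)^{s_\mfp}\in M_{P,i}^{e_\mfp}\subseteq\mfp V_{P,i}$, which gives $t\geq 1$ in case (i) without any divisibility bookkeeping.
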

\begin{proof}
As before, let $r = r_\mfp$ and $s = s_\mfp$. First we show that $g$ satisfies either (i) or (ii) for each $P|\mfp D_0$ and each $i \in I_P$. Fix $P$ and $i \in I_P$, let $V=V_{P,i}$, $M=M_{P,i}$, and $v=v_{P,i}$. By Proposition \ref{prop: polynomials in D and  M}(2), $f(f^{r-1}-1) = f^r -f \in \mfM$ for each unitary maximal ideal of $\msD$ above $\mfp$. So, either $v(f) > 0$ or $v(f^{r-1}-1) > 0$. Note that these two conditions cannot occur simultaneously.

Assume first that $v(f) > 0$. Then, $v(g) = v(\tfrac{f}{\mfp})$. If (i) holds for $f$, then $v(f) = tv(\mfp)$ for some integer $t \geq 1$. So, $v(g) = (t-1)v(\mfp)$ and (i) holds for $g$. On the other hand, if $f$ satisfies (ii), then so does $g$, because for all $t \geq 0$ we have $v(\tfrac{g}{\mfp^t}) = v(\tfrac{f}{\mfp^{t+1}}) > 0$.

Next, assume that $v(f^{r-1}-1) > 0$. Then, $f^{r-1}-1 \in M$, $(f^{r-1}-1)^s \in \mfp V$ and $v(f) = 0$ (so, in this case, condition (i) is true for $f$ with $t=0$). By Lemma \ref{lem: might be false}, either condition (i) or condition (ii) holds for $(f^{r-1}-1)^s$. Moreover, if (i) is true for $(f^{r-1}-1)^s$, then the fact that $(f^{r-1}-1)^s \in \mfp V$ means $v((f^{r-1}-1)^s/\mfp^t) = 0$ for some $t \geq 1$. We have $v(g) = v((f^{r-1}-1)^s/\mfp)$, so either (i) or (ii) holds for $g$.

Finally, the fact that $g$ satisfies either (i) or (ii) implies that $g \in V$, and this is true for every $P|\mfp D_0$ and every $i \in I_P$. This is enough to conclude that $g \in \msD$, because if $P'$ is a nonzero prime of $D$ that is not over $\mfp$, then $\mfp$ is a unit in each valuation overring associated to $P'$.
\end{proof}

\begin{Lem}\label{lem: final lemma}
Let $\mfM$ be a unitary prime ideal of $\msD$ above $\mfp$ and let $f \in \msD\setminus\{0\}$. Then, either $f^{s_\mfp}/\mfp^t \in \mfM\msD_\mfM$ for all integers $t \geq 1$, or $f^{s_\mfp}/\mfp^t$ is a unit of $\msD_\mfM$ for some integer $t \geq 0$.
\end{Lem}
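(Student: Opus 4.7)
The plan is to bootstrap Lemma \ref{lem: new poly in D} into an infinite recursion inside $\msD$ and then read off the dichotomy from the behavior of the iterates modulo $\mfM$. Set $h := f^{s_\mfp}$. By Lemma \ref{lem: might be false}, $h$ satisfies condition (i) or (ii) at every $v_{P,i}$ with $P\mid\mfp D_0$, which is exactly the input hypothesis of Lemma \ref{lem: new poly in D}. So I would define $h_0 := h$ and recursively
\[ h_{n+1} := \frac{h_n(h_n^{r_\mfp-1}-1)^{s_\mfp}}{\mfp}; \]
Lemma \ref{lem: new poly in D} places each $h_{n+1}$ in $\msD$ and preserves the dichotomy, so the recursion never stalls. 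A short induction yields the telescoping identity
\[ h_n\,\mfp^n \;=\; h\cdot\prod_{k=0}^{n-1}(h_k^{r_\mfp-1}-1)^{s_\mfp} \qquad \text{in } \msD. \]

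Next, I would analyze the iterates modulo $\mfM$. By Proposition \ref{prop: polynomials in D and  M}(2), $h_k(h_k^{r_\mfp-1}-1)=h_k^{r_\mfp}-h_k\in\mfM$ for every $k$, and since $\mfM$ is prime, exactly one of $h_k\in\mfM$ or $h_k^{r_\mfp-1}-1\in\mfM$ holds. Moreover, whenever $h_k\in\mfM$, the element $h_k^{r_\mfp-1}-1$ reduces to $-1$ modulo $\mfM$, so $(h_k^{r_\mfp-1}-1)^{s_\mfp}\in\msD\setminus\mfM$.

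Now I would split into cases. If some $h_n$ lies outside $\mfM$, choose the least such $n\geq 0$. Every $h_k$ with $k<n$ lies in $\mfM$, so each factor $(h_k^{r_\mfp-1}-1)^{s_\mfp}$ belongs to $\msD\setminus\mfM$, and the identity above rearranges to
\[ \frac{f^{s_\mfp}}{\mfp^n} \;=\; \frac{h_n}{\prod_{k=0}^{n-1}(h_k^{r_\mfp-1}-1)^{s_\mfp}}, \]
a quotient of two elements of $\msD\setminus\mfM$, hence a unit of $\msD_\mfM$; this is the second alternative with $t=n\geq 0$ (interpreting the empty product as $1$ when $n=0$). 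In the complementary case, $h_n\in\mfM$ for every $n\geq 0$; applying the same identity with arbitrary $t\geq 1$ realises $f^{s_\mfp}/\mfp^t$ as $h_t\in\mfM$ divided by a product in $\msD\setminus\mfM$, which lies in $\mfM\msD_\mfM$, giving the first alternative.

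The main obstacle is a bookkeeping one: one must notice that Lemma \ref{lem: might be false} delivers the hypothesis of Lemma \ref{lem: new poly in D} for the power $f^{s_\mfp}$ rather than $f$ itself, and verify both that Lemma \ref{lem: new poly in D} lands $h_{n+1}$ in $\msD$ and that it preserves the dichotomy, so the recursion does not break. Once that is in place, the case split on whether the trajectory $(h_n)$ ever leaves $\mfM$ produces the conclusion almost mechanically from the telescoping identity.
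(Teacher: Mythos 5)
Your proof is correct and follows essentially the same strategy as the paper's: iterate Lemma~\ref{lem: new poly in D} starting from $f^{s_\mfp}$ (noting Lemma~\ref{lem: might be false} supplies the input hypothesis), derive the telescoping identity $h_n\mfp^n = f^{s_\mfp}\prod_{k=0}^{n-1}(h_k^{r_\mfp-1}-1)^{s_\mfp}$, observe that the product stays outside $\mfM$ as long as the iterates stay inside $\mfM$, and split on whether the trajectory $(h_n)$ ever leaves $\mfM$. The only cosmetic differences from the paper are notational (the paper calls your $h_n$ by $f_n$ and your product by $h_n$, and dispatches the $f\notin\mfM$ case separately, whereas you absorb it as $n=0$).
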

\begin{proof}
Once again, let $r = r_\mfp$ and $s = s_\mfp$. There is nothing to prove if $f \notin \mfM$, so assume that $f \in \mfM$. Let $f_0 = f^s$, and for each $k \geq 1$, let
\begin{equation*}
f_k = \dfrac{f_{k-1}(f_{k-1}^{r-1}-1)^s}{\mfp}.
\end{equation*}
By Lemma \ref{lem: might be false}, $f_0$ meets the conditions of Lemma \ref{lem: new poly in D}. Thus, $f_1 \in \msD$, and by induction $f_k \in \msD$ for all $k \geq 0$.

For each $k \geq 1$, let $h_k = \prod_{i=0}^{k-1} (f_i^{r-1}-1)^s$. Then, $f_k = (f^s h_k)/\mfp^k$ for all $k \geq 1$. Notice that if $f_i \in \mfM$ for all $0 \leq i \leq k-1$, then $h_k \notin \mfM$. Hence, if there exists a smallest positive integer $t$ such that $f_t \notin \mfM$, then $f^s/\mfp^t = f_t/h_t$ is a unit in $\msD_\mfM$. Otherwise, $f_t \in \mfM$ for every $t \geq 1$, each $h_t \notin \mfM$, and $f^s/\mfp^t = f_t/h_t \in \mfM \msD_\mfM$ for all $t \geq 1$.
\end{proof}

\begin{Thm}\label{thm: D is Prufer}
Let $\msD$ be a domain formed via Construction \ref{Building D}. Then, $\msD$ is Pr\"{u}fer.
\end{Thm}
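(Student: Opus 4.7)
The plan is to verify $\msD$ is Pr\"ufer by proving each localization $\msD_\mfM$ at a maximal ideal is a valuation domain. Non-unitary maximal ideals are immediately handled by Lemma \ref{lem: non-unitary}, so fix a unitary $\mfM$ lying above a prime $\mfp$ of $D_0$. A preliminary observation is that $\msD$ is integrally closed, being an intersection of valuation rings and the PID $K[X]$, so $\msD_\mfM$ is too. Consequently, to show $\msD_\mfM$ is a valuation ring, it is enough to verify that for each nonzero $x \in K(X)$, some power $x^n$ or $x^{-n}$ belongs to $\msD_\mfM$: integral closedness then forces $x$ or $x^{-1}$ into $\msD_\mfM$.

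Write $x=f/g$ with $f,g \in \msD \setminus \{0\}$ (possible since $D[X] \subseteq \msD$), and apply Lemma \ref{lem: final lemma} to each of $f$ and $g$. This produces, for each of the two elements, a dichotomy between condition (i) (``$h^{s_\mfp}/\mfp^t \in \mfM\msD_\mfM$ for all $t \geq 1$'') and (ii) (``$h^{s_\mfp}/\mfp^t$ is a unit of $\msD_\mfM$ for some $t \geq 0$''). In the three cases where at least one of $f,g$ satisfies (ii), the verification is a direct calculation. For example, if $f^{s_\mfp} = u_1 \mfp^{t_1}$ and $g^{s_\mfp} = u_2 \mfp^{t_2}$ with $u_1,u_2$ units in $\msD_\mfM$, then $(f/g)^{s_\mfp} = (u_1 u_2^{-1}) \mfp^{t_1 - t_2}$, placing one of $(f/g)^{s_\mfp}, (g/f)^{s_\mfp}$ in $\msD_\mfM$; if only $f$ satisfies (ii) with some $t_1$, then $g^{s_\mfp}/f^{s_\mfp} = u_1^{-1}(g^{s_\mfp}/\mfp^{t_1})$ lies in $\msD_\mfM$, because either $t_1 \geq 1$ and $g^{s_\mfp}/\mfp^{t_1} \in \mfM\msD_\mfM$ by (i) for $g$, or $t_1 = 0$ and $g^{s_\mfp} \in \msD_\mfM$ directly. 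Integral closedness then lifts these $s_\mfp$-th-power membership conclusions to $x$ or $x^{-1}$ itself.

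The main obstacle is the remaining case, where both $f$ and $g$ satisfy (i), so that $f^{s_\mfp}, g^{s_\mfp} \in \bigcap_{t \geq 1} \mfp^t \msD_\mfM$ and neither ratio of $s_\mfp$-th powers is visibly in $\msD_\mfM$. To attack this, I would exploit the explicit sequences $f_t, g_t$ from the proof of Lemma \ref{lem: final lemma}: each $f_t \in \msD$ satisfies $\mfp^t f_t = f^{s_\mfp} h_t$ for some $h_t \in \msD \setminus \mfM$, and analogously for $g_t$. Swapping $f$ for $f_t$ (or $g$ for $g_t$) only changes $x$ by a unit of $\msD_\mfM$, so Lemma \ref{lem: final lemma} may be re-applied to these iterated elements with the aim of eventually producing an iterate satisfying (ii), thereby reducing to the previously handled cases. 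The uniform bounds $s_\mfp = e_\mfp!$ and $r_\mfp = |D_0/\mfp D_0|^{f_\mfp!}$ built into Construction \ref{Building D} are precisely what should prevent the dichotomy from stalling indefinitely in (i), by forcing a common rational denominator for the $\mfp$-adic valuations at all $V_{P,i}$ above $\mfp$. Making this reduction fully rigorous is where the argument will require the most care.
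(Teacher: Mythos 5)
The overall skeleton of your argument matches the paper's: dispatch non-unitary maximal ideals via Lemma~\ref{lem: non-unitary}, then for a unitary $\mfM$ above $\mfp$, show that for any ratio of nonzero elements of $\msD$ either it or its inverse has $s_\mfp$-th power in $\msD_\mfM$, and conclude by integral closedness of $\msD_\mfM$. Your handling of the three cases where at least one of $f,g$ falls into the ``unit'' alternative of Lemma~\ref{lem: final lemma} is also correct.

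However, there is a genuine gap in the remaining case, where both $f$ and $g$ satisfy the ``all $t$'' alternative, and your proposed fix will not work as described. The iterates $f_t$ constructed in the proof of Lemma~\ref{lem: final lemma} all lie in $\mfM$ precisely when $f$ is in that alternative --- that is what the dichotomy asserts --- so feeding $f_t$ (or $g_t$) back into the lemma cannot escape the bad case; you are iterating inside a stable situation. The uniform bounds $s_\mfp, r_\mfp$ are already fully consumed in establishing the dichotomy itself and provide no extra leverage here. What is actually needed, and what the paper does, is to choose $f$ and $g$ \emph{coprime in $K[X]$} from the outset. Coprimality yields a B\'ezout identity $1 = h_1 f^{s_\mfp} + h_2 g^{s_\mfp}$ in $K[X]$; clearing denominators by some $d \in D\setminus\{0\}$ gives $d = dh_1 f^{s_\mfp} + dh_2 g^{s_\mfp}$ with $dh_1, dh_2 \in D[X] \subseteq \msD$. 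If both $f^{s_\mfp}/\mfp^t$ and $g^{s_\mfp}/\mfp^t$ lay in $\mfM\msD_\mfM$ for every $t\geq 1$, then $d/\mfp^t \in \mfM\msD_\mfM$ for all $t$, which is impossible since $d \in D$ and $\mfp$ is a fixed prime element. This single observation eliminates the problematic case and makes the argument close; without coprimality (which is harmless to impose, as $K[X]$ is a PID and rescaling by a constant preserves coprimality while putting $f,g$ into $\msD$), your proof stalls exactly where you flag it as needing care.
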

\begin{proof}
Let $\mfM$ be a maximal ideal of $\msD$. If $\mfM$ is non-unitary, then $\msD_\mfM$ is a valuation domain by Lemma \ref{lem: non-unitary}. So, assume that $\mfM$ is a unitary maximal ideal above $\mfp$. Let $f, g \in \msD$ be nonzero and coprime in $K[X]$. We will show that either $f/g$ or $g/f$ is in $\msD_\mfM$. This is clear if either $f \notin \mfM$ or $g \notin \mfM$. So, assume that both $f$ and $g$ are in $\mfM$.

Let $s = s_\mfp$. Since $f$ and $g$ are coprime in $K[X]$, there exist $h_1, h_2 \in K[X]$ such that $1 = h_1 f^s + h_2 g^s$. Choose $d \in D \setminus\{0\}$ such that $dh_1, dh_2 \in D[X] \subseteq \msD$. If both $f^s/\mfp^t$ and $g^s/\mfp^t$ are in $\mfM \msD_\mfM$ for every positive integer $t$, then
\begin{equation*}
\dfrac{d}{\mfp^t} = \dfrac{dh_1 f^s + dh_2 g^s}{\mfp^t} \in \mfM \msD_\mfM
\end{equation*}
for all $t \geq 1$. This implies that $d/\mfp^t \in P = \mfM \cap D$ for all $t \geq 1$, which is impossible. By Lemma \ref{lem: final lemma}, we can find $t \geq 1$ such that both $f^s/\mfp^t$ and $g^s/\mfp^t$ are in $\msD_\mfM$, and either $f^s/\mfp^t$ or $g^s/\mfp^t$ is a unit of $\msD_\mfM$. Without loss of generality, assume that $f^s/\mfp^t$ is a unit of $\msD_\mfM$. Then,
\begin{equation}\label{g/f eq}
(g/f)^s = g^s/f^s = (g^s/\mfp^t) / (f^s/\mfp^t) \in \msD_\mfM.
\end{equation}
Since $\msD$ is an intersection of valuation domains, it is integrally closed. So, $\msD_\mfM$ is also integrally closed. Thus, $g/f \in \msD_\mfM$, $\msD_\mfM$ is a valuation domain, and $\msD$ is Pr\"{u}fer.
\end{proof}

\subsection{Proving that $\Int_K(\Lambda_n)$ is Prufer}

Next, we show how to express $\Int_K(\Lambda_n)$ as the intersection of a collection of valuation domains like those of Construction \ref{Building D}. It then follows from Theorem \ref{thm: D is Prufer} that $\Int_K(\Lambda_n)$ is \Pruf{}, and this will be enough to complete the proof of Theorem \ref{thm: main}(1).

For each $P\in\Max(D)$, we let $\widehat{D_P}$ and $\widehat{K}_P$ be the $P$-adic completions of $D$ and $K$, respectively. Let $\overline{\widehat{K}_P}$ be a fixed algebraic closure of $\widehat{K}_P$. Since $D_P$ is a rank one valuation domain, it follows that $v_P$ extends uniquely to $\overline{\widehat{K}_P}$ and the valuation domain of this unique extension is equal to $\overline{\widehat{D_P}}$, the absolute integral closure of $\widehat{D_P}$. We still denote this unique extension by $v_P$.  For $n\geq 1$, we set $\widehat{\Lambda}_{P,n}:=\{\alpha\in \overline{\widehat{D_P}}\mid [\widehat{K}_P(\alpha):\widehat{K}_P]\leq n\}$. Finally, for $\alpha\in\overline{\widehat{D_P}}$, we define $V_{P,\alpha}:=\{\phi\in K(X)\mid \phi(\alpha)\in \overline{\widehat{D_P}}\}$, which is a valuation domain of $K(X)$ lying above $D_P$ by \cite[Proposition 2.2]{PerTransc}.
\begin{Thm}\label{thm: Lambda_n}
Let $n\geq 1$. Then,
$$\Int_K(\Lambda_n)=\bigcap_{P\in\Max(D)}\bigcap_{\alpha\in\widehat{\Lambda}_{P,n}}V_{P,\alpha}\cap K[X].$$
\end{Thm}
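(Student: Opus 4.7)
My plan is to prove the two inclusions separately. Throughout, I will use the fact, noted already in the paper, that since $D$ is almost Dedekind, $v_P$ is a discrete rank one valuation, hence extends uniquely to $\overline{\widehat{K}_P}$, and that this unique extension restricts to a valuation on $\oK$ (after fixing an embedding $\oK \hookrightarrow \overline{\widehat{K}_P}$) whose valuation ring contains $\oD$.

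For the inclusion $\Int_K(\Lambda_n) \subseteq \bigcap_{P,\alpha} V_{P,\alpha} \cap K[X]$: fix $f \in \Int_K(\Lambda_n)$, a prime $P \in \Max(D)$, and $\alpha \in \widehat{\Lambda}_{P,n}$. Let $\mu_\alpha \in \widehat{D_P}[X]$ be its minimal polynomial over $\widehat{K}_P$, of degree $d \leq n$. The key step is a Krasner-type approximation: using the density of $D$ in $\widehat{D_P}$, approximate $\mu_\alpha$ coefficientwise by a monic polynomial $g \in D[X]$ of the same degree $d$. By Krasner's lemma, if the approximation is sufficiently fine (in $v_P$), then $g$ has a root $\beta \in \overline{\widehat{K}_P}$ with $v_P(\beta - \alpha)$ arbitrarily large and $\widehat{K}_P(\beta) = \widehat{K}_P(\alpha)$. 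Since $\beta$ is algebraic over $K$ and satisfies the monic polynomial $g \in D[X]$ of degree $d \leq n$, we may regard $\beta$ as an element of $\Lambda_n$. By hypothesis $f(\beta) \in \Lambda_n \subseteq \oD$, so $v_P(f(\beta)) \geq 0$. Continuity of $f$ in the $v_P$-adic topology gives $v_P(f(\alpha) - f(\beta))$ arbitrarily large, and then the ultrametric inequality yields $v_P(f(\alpha)) \geq 0$, i.e.\ $f(\alpha) \in \overline{\widehat{D_P}}$, which means $f \in V_{P,\alpha}$.

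For the reverse inclusion, suppose $f \in K[X]$ lies in every $V_{P,\alpha}$. Fix $\beta \in \Lambda_n$; we must show $f(\beta) \in \Lambda_n$. Since $f \in K[X]$ and $\beta$ has degree at most $n$ over $K$, $f(\beta) \in K(\beta)$ already has degree at most $n$ over $K$, so it suffices to show $f(\beta) \in \oD$, i.e.\ $w(f(\beta)) \geq 0$ for every valuation $w$ of $\oK$ extending some $v_P$ with $P \in \Max(D)$. Any such $w$ arises (after completion) from an embedding $\iota:K(\beta) \hookrightarrow \overline{\widehat{K}_P}$ under which $w$ is the restriction of the unique extension of $v_P$; let $\alpha = \iota(\beta)$. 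Because $\beta$ is integral over $D$, $\alpha$ is integral over $\widehat{D_P}$, and $[\widehat{K}_P(\alpha):\widehat{K}_P] \leq [K(\beta):K] \leq n$, so $\alpha \in \widehat{\Lambda}_{P,n}$. By hypothesis $f \in V_{P,\alpha}$, hence $f(\alpha) \in \overline{\widehat{D_P}}$, and pulling back through $\iota$ gives $w(f(\beta)) \geq 0$.

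The main obstacle is the first inclusion, specifically ensuring that the Krasner approximation of $\mu_\alpha$ can be carried out with coefficients in $D$ (and not merely in $\widehat{D_P}$); this will rest on the density of $D_P$, and hence $D$, in $\widehat{D_P}$ with respect to the $P$-adic topology. Everything else is a direct application of Krasner's lemma, continuity of polynomials in the $v_P$-adic topology, and the standard translation between extensions of $v_P$ to $\oK$ and embeddings $\oK \hookrightarrow \overline{\widehat{K}_P}$.
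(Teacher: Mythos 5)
Your proposal is correct and takes essentially the same route as the paper: both directions rely on the density of $D$ in $\widehat{D_P}$, a Krasner/continuity-of-roots approximation of the minimal polynomial of $\alpha$ by a monic $q \in D[X]$ (the paper cites Ribenboim Ch.\ 1, (P)), and the standard correspondence between valuations of $K(\beta)$ over $P$ and $K$-embeddings into $\overline{\widehat{K}_P}$ (Ribenboim Ch.\ 4, (L)). The only cosmetic difference is that where you appeal to ``continuity of $f$,'' the paper writes $f = g/d$ with $g \in D[X]$, uses that $\alpha - \sigma(\beta)$ divides $g(\alpha) - g(\sigma(\beta))$ in $\overline{\widehat{D_P}}$, and chooses the approximation precision $\varepsilon > v_P(d)$ to absorb the denominator, which is exactly the quantitative content behind your continuity claim.
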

\begin{proof}
Let $f\in\Int_K(\Lambda_n)$ and write $f(X)=\frac{g(X)}{d}$ for some $g\in D[X]$ and $d\in D\setminus\{0\}.$ For each $P\in\Max(D)$ such that $d\notin P$, we have $f\in D_P[X]$, so $f\in V_{P,\alpha}$ for each $\alpha\in\widehat{\Lambda}_{P,n}$. Next, let $P\in\Max(D)$ be such that $d\in P$ and fix $\alpha\in \widehat{\Lambda}_{P,n}$. Let $k=v_P(d)\in\Z$. Let $p_{\alpha} \in \widehat{D_P}[X]$ be the minimal polynomial of $\alpha$. 

Extend the valuation $v_P$ to $\widehat{K}_P[X]$ by taking $v_P(h)$ to be the minimum of the values of the coefficients of $h \in \widehat{K}_P[X]$. Since $D$ is dense in $\widehat{D_P}$, for any $\delta \in \Z$ we can find a monic $q \in D[X]$ such that $\deg(q) = \deg(p_{\alpha})$ and $v_P(p_{\alpha} - q) > \delta$. Let $\varepsilon \in \Z$ such that $\varepsilon > \max\{k,v_P(\alpha_i-\alpha_j)\mid \alpha_i\not=\alpha_j\in\Omega_{\alpha}\}$. Then, by \cite[Chapter 1, (P)]{Rib}, there exists $q \in D[X]$ monic of degree $\deg(p_\alpha)$ and a root $\gamma$ of $q$ in $\overline{\widehat{D_P}}$ such that $v_P(\alpha - \gamma) \geq \varepsilon > k$. 

Next, for each root $\lambda \in \Omega_q$ of $q$ in $\overline{K}$, let $D_\lambda'$ be the integral closure of $D$ in $K(\lambda)$. By \cite[Chapter 4, (L)]{Rib}, for each $\lambda$ and for each prime $Q$ of $D_\lambda'$ above $P$, there exists a $K$-embedding $\sigma_{\lambda,Q}: K(\lambda) \to \overline{\widehat{K}_P}$ such that the extension $v_Q$ of $v_P$ to $K(\lambda)$ is given by $v_Q = v_P \circ \sigma_{\lambda,Q}$.  Choose $\beta \in \Omega_q$ with $K$-embedding $\sigma:K(\beta)\to\overline{\widehat{K}_P}$ such that $\sigma(\beta)$ is equal to the root $\gamma$ of $q$ in $\overline{\widehat{D_P}}$ chosen as above. Then, by the previous paragraph, $v_P(\alpha - \sigma(\beta)) > k$. Since $g$ has coefficients in $D$, $\alpha - \sigma(\beta)$ divides $g(\alpha) - g(\sigma(\beta))$ in $\overline{\widehat{D_P}}$. Thus, $v_P(g(\alpha) - g(\sigma(\beta))) > k$. Furthermore, $v_P(g(\sigma(\beta))) = v_P(\sigma(g(\beta))) = v_Q(g(\beta))$, and $v_Q(g(\beta)) \geq k$ because $f \in \Int_K(\Lambda_n)$ and $\beta \in \Lambda_n$. It follows that $v_P(g(\alpha)) \geq k$. Hence, $f(\alpha)=\frac{g(\alpha)}{d} \in \overline{\widehat{D_P}}$ and $f \in V_{P,\alpha}$.

Conversely, let $f\in K[X]$ be in each of the valuation domains $V_{P,\alpha}$, for $P\in\Max(D)$ and $\alpha\in \widehat{\Lambda}_{P,n}$. Let $\beta\in\Lambda_n$ and suppose that $f(X)=\frac{g(X)}{d}$ for some $g\in D[X]$ and $d\in D\setminus\{0\}.$ We set $F=K(\beta)$ and let $D'$ be the integral closure of $D$ in $F$. Let $P\in\Max(D)$.  If $d\not\in P$, then $f\in D_P[X]$. In this case, $f(\beta)\in (D')_{Q}$ for each prime ideal $Q$ of $D'$ lying above $P$, so $f(\beta)\in (D')_P$, which is the integral closure of $D_P$ in $F$. Suppose now that $d\in P$. Fix a prime ideal $Q$ in $D'$ lying above $P$. By \cite[Chapter 4, (L)]{Rib}, there is an embedding $\sigma_Q : F \to \overline{\widehat{K}_P}$ such that the extension $v_Q$ of $v_P$ to $F$ is given by $v_Q = v_P \circ \sigma_Q$. Let $\alpha = \sigma_Q(\beta)$. Since $f \in V_{P,\alpha}$, we have $v_P(f(\alpha)) \geq 0$. So, $v_Q(f(\beta)) \geq 0$. This holds for each $Q$ above $P$, so $f(\beta) \in (D')_P$. We have shown that $f(\beta)$ is in $(D')_P$ for each $P \in \Max(D)$. It follows that $f(\beta) \in D'$ and $f \in \Int_K(\Lambda_n)$.
\end{proof}

\begin{Cor}\label{cor: Lambda_n is Prufer}
Let $D$ be an \OurD-domain.
\begin{enumerate}[(1)]
\item For each $n \geq 1$, $\Int_K(\Lambda_n)$ is \Pruf{}.
\item Let $A$ be a $D$-algebra that is torsion-free and finitely generated as a $D$-module, and such that $A \cap K = D$. Then, $\Int_K(A,A')$ is \Pruf{}.
\end{enumerate}
\end{Cor}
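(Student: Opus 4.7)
For part (1), the plan is to recognize $\Int_K(\Lambda_n)$ as an instance of the ring $\msD$ built in Construction \ref{Building D} and then apply Theorem \ref{thm: D is Prufer}. Theorem \ref{thm: Lambda_n} already furnishes the desired representation
\begin{equation*}
\Int_K(\Lambda_n)=\bigcap_{P\in\Max(D)}\bigcap_{\alpha\in\widehat{\Lambda}_{P,n}}V_{P,\alpha}\cap K[X],
\end{equation*}
so I would take $I_P := \widehat{\Lambda}_{P,n}$ and use the rings $V_{P,\alpha}$; each is a valuation overring of $D[X]$ whose valuation $v_{P,\alpha}$ extends $v_P$ by \cite[Proposition 2.2]{PerTransc}. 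It then remains only to verify the two boundedness conditions in Construction \ref{Building D}.

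Fix a prime $\mfp$ of $D_0$, and let $\widetilde{E}_\mfp := \sup E_\mfp$ and $\widetilde{F}_\mfp := \sup F_\mfp$, both finite because $D$ is an \OurD-domain. Fix $P \mid \mfp D_0$ and $\alpha \in \widehat{\Lambda}_{P,n}$. I would view $v_{P,\alpha}$ as arising from the unique extension of $v_P$ to $\overline{\widehat{K}_P}$ by evaluation at $\alpha$; its value group therefore embeds in the value group of $v_P$ on $\widehat{K}_P(\alpha)$, whose index over $\Z$ is the ramification index $e(\widehat{K}_P(\alpha)/\widehat{K}_P)$ and is at most $[\widehat{K}_P(\alpha):\widehat{K}_P]\leq n$. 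Writing $e^*_{P,\alpha}$ for the ramification index of $V_{P,\alpha}$ over $D_P$, we have $e^*_{P,\alpha}\leq n$ and so
\begin{equation*}
\mfp V_{P,\alpha} \;=\; M_{P,\alpha}^{\,e^*_{P,\alpha}\cdot e_{P,\mfp}} \;\supseteq\; M_{P,\alpha}^{\,n\widetilde{E}_\mfp},
\end{equation*}
yielding condition (i) with $e_\mfp := n\widetilde{E}_\mfp$. For condition (ii), the residue field $V_{P,\alpha}/M_{P,\alpha}$ embeds in the residue field at $\alpha$ of $\overline{\widehat{D_P}}$, which has degree at most $n$ over the residue field $D/P$ of $\widehat{D_P}$; multiplicativity of residue degrees then gives
\begin{equation*}
[(V_{P,\alpha}/M_{P,\alpha}):(D_0/\mfp D_0)] \;\leq\; n\cdot f_{P,\mfp} \;\leq\; n\widetilde{F}_\mfp,
\end{equation*}
so $f_\mfp := n\widetilde{F}_\mfp$ works. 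Theorem \ref{thm: D is Prufer} then yields that $\Int_K(\Lambda_n)$ is \Pruf{}.

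For part (2), let $n = \dim_K B$. I would apply Theorem \ref{thm: Int_K(A,A')} to write $\Int_K(A,A') = \Int_K(\Omega_A, \Lambda_n)$. Since $\Omega_A \subseteq \Lambda_n$, this ring contains $\Int_K(\Lambda_n)$, and both share the fraction field $K(X)$ because each contains $D[X]$. Hence $\Int_K(A,A')$ is an overring of the \Pruf{} domain $\Int_K(\Lambda_n)$ from part (1), and since overrings of \Pruf{} domains are \Pruf{}, we conclude that $\Int_K(A,A')$ is \Pruf{}.

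The main obstacle will be the valuation-theoretic bookkeeping in part (1): one must cleanly separate the $P$-dependent contribution (controlled uniformly in $P\mid \mfp D_0$ by the \OurD{} hypothesis on $D$) from the $\alpha$-dependent contribution (controlled uniformly in $\alpha$ by the bound $[\widehat{K}_P(\alpha):\widehat{K}_P] \leq n$), so that a single bound depending only on $\mfp$ and $n$ emerges for both the ramification and the residue degree of $V_{P,\alpha}$ over $D_P$.
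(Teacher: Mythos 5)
Your proof is correct and follows essentially the same route as the paper: express $\Int_K(\Lambda_n)$ via Theorem \ref{thm: Lambda_n}, use \cite[Proposition 2.2]{PerTransc} to bound the ramification index and residue degree of each $V_{P,\alpha}$ over $D_P$ by $n$, combine with the \OurD{}-bounds on $E_\mfp$ and $F_\mfp$ to verify Construction \ref{Building D}, and invoke Theorem \ref{thm: D is Prufer}; part (2) is then the same overring argument via Theorem \ref{thm: Int_K(A,A')}. Your version is slightly more careful in separating the bounds $\widetilde{E}_\mfp$ and $\widetilde{F}_\mfp$ (the paper subsumes both into a single $d_\mfp$), and in noting that $e^*_{P,\alpha}\leq n$ rather than asserting equality, but this is only a cosmetic refinement of the published proof.
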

\begin{proof}
(1) Let $n \geq 1$ and express $\Int_K(\Lambda_n)$ as in Theorem \ref{thm: Lambda_n}. Let $\mfp$ be a nonzero prime element of $D_0$. Since $D$ is an \OurD-domain, we know that the integers $E_{\mfp}$ and $F_{\mfp}$ from Notation \ref{D_0 notations} are bounded above, say by some integer $d_{\mfp}$. Let $P$ be a prime of $D$ above $\mfp$. By the results of \cite[Proposition 2.2]{PerTransc}, we have  $M_{P,\alpha}^{e_{P,\alpha}}=PV_{P,\alpha}$ for some $e_{P,\alpha}\leq n$ and $[V_{P,\alpha}/M_{P,\alpha}:D/P]\leq n$  for each $\alpha\in \widehat{\Lambda}_{P,n}$. Therefore, $\Int_K(\Lambda_n)$ is a domain formed by Construction \ref{Building D}, because we can take $e_{\mfp}=f_{\mfp}=nd_\mfp$. Hence, by Theorem \ref{thm: D is Prufer}, $\Int_K(\Lambda_n)$ is a Pr\"ufer domain.

(2) Let $n = \dim_K(B)$. Then, $\Int_K(A,A') = \Int_K(\Omega_A, \Lambda_n)$ by Theorem \ref{thm: Int_K(A,A')}. Since $\Omega_A \subseteq \Lambda_n$, we have $\Int_K(\Lambda_n) \subseteq \Int_K(\Omega_A, \Lambda_n)$. By part (1), $\Int_K(\Lambda_n)$ is \Pruf{}. Hence, $\Int_K(A,A')$ is also \Pruf{}.
\end{proof}

\subsection{The proof of Theorem \ref{thm: main}}

To finish the proof of Theorem \ref{thm: main}, it remains to consider the conditions that involve localizations at primes of $D$. For a prime $P \subseteq D$, let $(A')_P = (D\setminus P)^{-1}A'$ and $(A_P)' = \{b \in B \mid b \text{ is integral over } D_P\}$. It is straightforward to check that $(A')_P = (A_P)'$, so we may take $A_P' := (A')_P = (A_P)'$ and use the notation $A_P'$ without ambiguity.

\begin{Lem}\label{lem: Rush trick}
Let $P \in \Spec(D)$. Then, $\Int_K(A)_P \subseteq \Int_K(A_P)$ and $\Int_K(A,A')_P \subseteq \Int_K(A_P, A_P')$.
\end{Lem}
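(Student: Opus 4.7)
The plan is to prove both inclusions by a uniform argument. My first step is to observe that $\Int_K(A_P)$ and $\Int_K(A_P, A_P')$ each contain $D_P$ (as constants), so each is a $D_P$-module and therefore already equal to its own localization at $U := D \setminus P$. This reduces matters to establishing the un-localized inclusions $\Int_K(A) \subseteq \Int_K(A_P)$ and $\Int_K(A, A') \subseteq \Int_K(A_P, A_P')$.

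Next I would fix $g$ in the appropriate left-hand ring and $a \in A_P$; write $a = b/t$ with $b \in A$ and $t \in U$, and let $n = \deg g$, $g(X) = \sum_{i=0}^n c_i X^i$. The central computation
\[
t^n g(a) \;=\; t^n g(b/t) \;=\; \sum_{i=0}^n c_i\, t^{n-i}\, b^i \;\in\; B
\]
reduces the problem to showing that this sum lies in $A_P$ (respectively $A_P'$), since $t^n$ is a unit in $D_P$ and dividing then yields $g(a)$ in the required set.

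The main idea is Lagrange interpolation. For each $r \in D$, the element $rb$ lies in $A$, so $g(rb) = \sum_i c_i r^i b^i$ lies in $A$ (respectively in $A'$). Choosing $n+1$ shifts $r_0, \ldots, r_n \in D$ with pairwise differences in $U$---equivalently, whose images in $D/P$ are distinct---the Vandermonde matrix $(r_j^i)_{j,i}$ has determinant $\prod_{j<k}(r_k - r_j) \in U$, hence is invertible over $D_P$. Solving the resulting system expresses $\sum_i c_i t^{n-i} b^i$ as a $D_P$-linear combination of the values $g(r_j b) \in A$ (resp.\ $A'$), placing the target in $A_P$ (resp.\ $A_P'$).

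The hard part will be handling the case where the residue field $D/P$ does not contain $n+1$ distinct elements, since the Vandermonde inversion then fails to produce a suitable set of shifts inside $D$ itself. I expect to handle this by a $P$-adic approximation argument: in the situations the paper ultimately cares about (e.g.\ when $D$ is almost Dedekind with finite residue fields), $A$ is dense in $A_P$ under the $P$-adic topology, polynomials over $K$ are continuous for this topology, and the limit of a convergent sequence of elements of $A$ remains inside $A_P = B \cap \widehat{A_P}$, so that $g(a) \in A_P$ (resp.\ $A_P'$) follows directly.
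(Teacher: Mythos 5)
Your Vandermonde argument is correct as far as it goes, and the reduction to showing $\Int_K(A)\subseteq\Int_K(A_P)$ (resp.\ $\Int_K(A,A')\subseteq\Int_K(A_P,A_P')$) is sound, but the proposal has a genuine gap exactly where you flag it. The case $|D/P|\le\deg g$ is not a degenerate corner case: in every application of this lemma in the paper, $D/P$ is a \emph{finite} field, so for all but finitely many degrees the Vandermonde step is unavailable and the ``$P$-adic approximation'' is doing all the work. That fallback is not yet a proof. First, the lemma carries no hypothesis beyond the paper's standing assumptions ($D$ integrally closed), while your approximation argument implicitly requires $D_P$ to be a DVR (or at least a rank-one valuation ring) so that $D_P=K\cap\widehat{D_P}$ and $A_P=B\cap\widehat{A_P}$ hold and the $P$-adic topology on $A_P$ is Hausdorff; none of this is given. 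Second, and more seriously for the $A'$ version: from $g(a_m)\in A'$ and $g(a_m)\to g(a)$ you want $g(a)\in A_P'$, but membership in $A_P'$ is ``solves a monic polynomial over $D_P$,'' and minimal polynomials are not continuous in $m$ (degrees can jump). Making this rigorous requires passing to characteristic polynomials of the regular representation, invoking Cayley--Hamilton, and using integral closedness of $D_P$ to place their coefficients in $D_P$ before taking limits --- none of which is in the sketch.

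The paper avoids all of this with Rush's degree-induction trick: given $f\in\Int_K(A,A')_P$ of degree $m$ and $a/d\in A_P$ with $d\in D\setminus P$, one forms $g(X)=d^mf(X)-f(dX)$, whose degree drops to $\le m-1$ because the leading terms cancel. Then $f(a/d)=d^{-m}\bigl(g(a/d)+f(a)\bigr)$, and one works inside the commutative ring $R=A_P'\cap K[a]$ (the integral closure of $D_P$ in $K[a]$), where $d$ is a unit. This handles both statements uniformly, needs no hypothesis on the size of $D/P$ and no completion or valuation-theoretic machinery, and works for an arbitrary integrally closed $D$. Your Lagrange-interpolation idea is in the same spirit (it is essentially a multi-point version of the same cancellation), but the two-point form used by Rush is what makes the argument go through unconditionally; as written, your proposal does not close the gap.
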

\begin{proof}
We prove that $\Int_K(A,A')_P \subseteq \Int_K(A_P, A_P')$ by employing an induction technique due to Rush \cite[Proposition 1.4]{Rush}. The proof that $\Int_K(A)_P \subseteq \Int_K(A_P)$ is similar. Let $f \in \Int_K(A,A')_P$ have degree $m$.  We use induction on $m$.   If $m=0$, then $f \in A_P'$ and we are done. Assume that $m \geq 1$ and that any polynomial in $\Int_K(A,A')_P$ of degree at most $m-1$ is an element of $\Int_K(A_P, A_P')$. Let $a/d \in A_P$, where $a \in A$ and $d \in D \setminus P$. Let $g(x) = d^m f(x) - f(dx)$. Then, both $f(dx)$ and $g$ are in $\Int_K(A,A')_P$, and $\deg g \leq m-1$. By induction, $g \in \Int_K(A_P, A_P')$.

Now, let $R = A_P' \cap K[a]$. It may the case that $A_P'$ is not a ring; however, $R$ is equal to the integral closure of $D_P$ in $K[a]$, and hence is a ring. Both $g(a/d)$ and $f(a)$ are in $R$, and $d$ is invertible in $R$, so $f(a/d) = d^{-m}(g(a/d) + f(a)) \in R \subseteq A_P'$. This is true for all $a/d \in A_P$, so $f \in \Int_K(A_P, A_P')$.
\end{proof}

\begin{Lem}\label{lem: Prufer local property}
Let $D$ be an \OurD{}-domain. The following are equivalent.
\begin{enumerate}[(i)]
\item $\Int_K(A)$ is \Pruf{}.
\item For each $P \in \Spec(D)$, $\Int_K(A)_P$ is \Pruf{}.
\item For each $P \in \Spec(D)$, $\Int_K(A_P)$ is \Pruf{}.
\item For each $P \in \Spec(D)$, $\Int_K(A_P)$ is integrally closed.
\item $\Int_K(A) = \Int_K(A,A')$.
\end{enumerate}
\end{Lem}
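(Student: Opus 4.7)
The plan is to prove the cycle $(i) \Rightarrow (ii) \Rightarrow (iii) \Rightarrow (iv) \Rightarrow (v) \Rightarrow (i)$, using the equivalence $(i) \Leftrightarrow (v)$ already established as part of Theorem \ref{thm: main}(1) (conditions (i) and (ii) there, via Theorem \ref{thm: A is a direct product} together with Corollary \ref{cor: Lambda_n is Prufer}).

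Three of the four new implications should be short. For $(i) \Rightarrow (ii)$, I would note only that each $\Int_K(A)_P$ is a localization of the \Pruf{} domain $\Int_K(A)$, hence itself \Pruf{}. For $(ii) \Rightarrow (iii)$, Lemma \ref{lem: Rush trick} yields $\Int_K(A)_P \subseteq \Int_K(A_P) \subseteq K[X]$, so $\Int_K(A_P)$ is an overring of the \Pruf{} domain $\Int_K(A)_P$ and is therefore \Pruf{}. The implication $(iii) \Rightarrow (iv)$ is immediate because \Pruf{} domains are integrally closed, and $(v) \Rightarrow (i)$ is exactly the equivalence cited above.

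The main step is $(iv) \Rightarrow (v)$. Fix $P \in \Spec(D)$: since $D$ is an \OurD{}-domain, $D_P$ is a DVR with finite residue field, and the standing hypotheses on $A$ persist for $A_P$ as a $D_P$-algebra (it is torsion-free, finitely generated, and satisfies $A_P \cap K = D_P$ since localization commutes with the intersection $A \cap K$). In particular, all residue rings of $D_P$ are finite, so Theorem \ref{thm: Int_K(A,A')} applied to $(D_P, A_P)$ identifies $\Int_K(A_P, A_P')$ as the integral closure of $\Int_K(A_P)$. Hypothesis $(iv)$ then gives $\Int_K(A_P) = \Int_K(A_P, A_P')$, and invoking the equivalence of conditions $(ii)$ and $(iii)$ of Theorem \ref{thm: main}(1) (which is already available for the pair $(D_P, A_P)$) should deliver $A_P = A_P'$ for every $P \in \Spec(D)$.

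To close the argument, I would observe that $A \subseteq A'$ is automatic and that $A'/A$ is a $D$-module whose localization at every maximal ideal of $D$ vanishes, forcing $A'/A = 0$ and hence $A = A'$; consequently $\Int_K(A) = \Int_K(A, A')$, which is $(v)$. I expect the principal obstacle to be essentially bookkeeping: carefully verifying that all the hypotheses required by Theorems \ref{thm: Int_K(A,A')} and \ref{thm: main}(1) really do transfer to the localized pair $(D_P, A_P)$, and bridging the local equality $A_P = A_P'$ to its global counterpart through the standard local-global principle for modules over $D$.
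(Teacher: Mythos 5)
Your outline is sound for the easy arrows: $(i)\Rightarrow(ii)$ via localization of a \Pruf{} domain, $(ii)\Rightarrow(iii)$ via Lemma \ref{lem: Rush trick} and the overring property, $(iii)\Rightarrow(iv)$ trivially, and $(v)\Rightarrow(i)$ is exactly Corollary \ref{cor: Lambda_n is Prufer}(2) (it would be cleaner to cite that corollary directly rather than routing through Theorem \ref{thm: main}(1), which is proved afterwards using the very lemma at hand; no circularity results because only the self-contained parts of that theorem are needed, but a reader has to unwind this).

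For $(iv)\Rightarrow(v)$ you take a genuinely different path from the paper. The paper never produces $A_P = A_P'$; it argues entirely at the level of polynomial rings, chaining
\[
\Int_K(A)\subseteq \Int_K(A,A')\subseteq \bigcap_{P}\Int_K(A,A')_P\subseteq \bigcap_{P}\Int_K(A_P,A_P')=\bigcap_{P}\Int_K(A_P)\subseteq\Int_K(A),
\]
where the middle step is Lemma \ref{lem: Rush trick} and the equality is Theorem \ref{thm: Int_K(A,A')} combined with hypothesis $(iv)$. Your route passes through $A_P=A_P'$ for each $P$ and then glues. That is a legitimate strategy, but the final gluing step as written has a gap: you assert that $A'/A$ is a $D$-module whose localizations vanish. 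As the paper warns in the introduction (and Definition \ref{def: integral elements}), when $A$ is noncommutative $A'$ need not be closed under addition, so $A'$ is not in general a $D$-submodule of $B$ and $A'/A$ is not a $D$-module. The conclusion $A=A'$ can still be rescued, but by a different principle: for a torsion-free finitely generated $D$-module one has $A=\bigcap_{P\in\Max(D)}A_P$, and then any $b\in A'$ lies in $A_P'=A_P$ for every $P$, hence in $A$. Note this also shows the detour through $A_P=A_P'$ is avoidable: the paper's chain uses only $\bigcap_P A_P = A$ (implicitly, in the last inclusion) and never needs $A$ itself to be integrally closed as an intermediate step. Finally, remember to handle $P=(0)$ separately, since there $D_P=K$ is a field and Theorem \ref{thm: Int_K(A,A')} and Theorem \ref{thm: A is a direct product} do not directly apply; fortunately $A_{(0)}=B=A'_{(0)}$ trivially.
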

\begin{proof}
(i) $\Rightarrow$ (ii) $\Rightarrow$ (iii) Certainly, $\Int_K(A) \subseteq \Int_K(A)_P$ for all $P$, and $\Int_K(A)_P \subseteq \Int_K(A_P)$ by Lemma \ref{lem: Rush trick}.

(iii) $\Rightarrow$ (iv) This is clear, because \Pruf{} domains are integrally closed.

(iv) $\Rightarrow$ (v) Assume that $\Int_K(A_P)$ is integrally closed for every $P \in \Spec(D)$. On the one hand, we always have $\Int_K(A) \subseteq \Int_K(A,A')$. On the other hand,
\begin{equation*}
\Int_K(A,A') \subseteq \bigcap_{P \in \Spec(D)} \Int_K(A,A')_P \subseteq \bigcap_{P \in \Spec(D)} \Int_K(A_P,A_P'),
\end{equation*}
where the second containment is true by Lemma \ref{lem: Rush trick}. By assumption, each $\Int_K(A_P)$ is integrally closed. Since $D$ is an \OurD{}-domain, $D_P$ is a DVR with finite residue field. So, $\Int_K(A_P) = \Int_K(A_P,A_P')$ by Theorem \ref{thm: Int_K(A,A')}. This gives
\begin{equation*}
\Int_K(A,A') \subseteq \bigcap_{P \in \Spec(D)} \Int_K(A_P,A_P') = \bigcap_{P \in \Spec(D)} \Int_K(A_P) \subseteq \Int_K(A).
\end{equation*}
Thus, $\Int_K(A) = \Int_K(A,A')$.

(v) $\Rightarrow$ (i) Apply Corollary \ref{cor: Lambda_n is Prufer}(2).
\end{proof}

We now have all the pieces necessary to complete the proof of Theorem \ref{thm: main}.

\begin{myproof}[of Theorem \ref{thm: main}]\mbox{}
Part (2) is Corollary \ref{cor: D semiprimitive}. For part (1), the conditions can be shown to be equivalent as follows.

(i) $\Rightarrow$ (v) This is Theorem \ref{thm: A is a direct product}.

(v) $\Rightarrow$ (iii) By assumption, $A_i=A_i'$ for each $i$, so $A=A'$.

(iii) $\Rightarrow$ (ii) This is clear.

(ii) $\Rightarrow$ (i) Use Corollary \ref{cor: Lambda_n is Prufer}(2).

(iii) $\Leftrightarrow$ (iv) This is Lemma \ref{pointwise integrally closed}.

(i) $\Leftrightarrow$ (vi) $\Leftrightarrow$ (vii) $\Leftrightarrow$ (viii) $\Leftrightarrow$ (ii)  This is Lemma \ref{lem: Prufer local property}.
\end{myproof}

We now consider Conjecture \ref{conj: main}, which states that in order for $\Int_K(A)$ to be \Pruf{}, it is sufficient that $\Int_K(A)$ be integrally closed. The conjecture is true at least in the case given in Theorem \ref{thm: conjecture special cases}.

\begin{myproof}[of Theorem \ref{thm: conjecture special cases}]\mbox{}
Assume that $\Int_K(A)$ is integrally closed and that $\Int_K(A)_P = \Int_K(A_P)$ for all prime ideals $P$ of $D$. Since $\Int_K(A)$ is integrally closed, so is each localization $\Int_K(A)_P$. But, $\Int_K(A_P) = \Int_K(A)_P$ and $D_P$ is a DVR with finite residue rings, so by Theorem \ref{thm: Int_K(A,A')}, $\Int_K(A_P)$ is equal to its integral closure $\Int_K(A_P, A_P')$, which is \Pruf{} by Corollary \ref{cor: Lambda_n is Prufer}(2). By Lemma \ref{lem: Prufer local property}, $\Int_K(A)$ is \Pruf{}.
\end{myproof}

There exist examples \cite[Examples VI.4.15, VI.4.16]{CaCh} of \OurD{}-domains $D$ that are not Dedekind, and also examples \cite[Example VI.4.15]{CaCh} of domains $D$ such that $\Int(D)$ is \Pruf{}, but does not behave well under localization. Taking $A=D$ shows that neither $D$ being Dedekind nor $\Int_K(A)$ being well-behaved under localization is necessary for $\Int_K(A)$ to be \Pruf{}. Consequently, Conjecture \ref{conj: main} remains an open problem.

\section{A noncommutative example}\label{sec: quaternions}

Here, we give an example of a noncommutative $D$-algebra $A$ such that $\Int_K(A)$ is \Pruf{}. By Theorem \ref{thm: D semiprimitive}, $D$ cannot be semiprimitive. Our example uses quaternion algebras over the local domain $\Z_{(2)}$. 

\begin{Not} Let $D$ be an overring of $\Z$ in $\Q$. The ring of Hurwitz quaternions over $D$ is 
\begin{equation*}
D_{\HQ} = \{a_0+a_1\bfi+a_2\bfj+a_3\bfk \mid \text{either } a_i \in D \text{ for each $i$, or } a_i \in D+\tfrac{1}{2} \text{ for each $i$}\}.
\end{equation*}
Here, $\bfi$, $\bfj$, and $\bfk$ satisfy $\bfi^2=\bfj^2=-1$ and $\bfi \bfj = \bfk = -\bfj\bfi$. The ring $D_{\HQ}$ also contains the Hurwitz unit $\bfh = (1+\bfi+\bfj+\bfk)/2$, which is a root of $X^2-X+1$. The extended algebra $B = D_{\HQ} \otimes_D \Q$ of $D_{\HQ}$ is the division ring of rational quaternions
\begin{equation*}
B = \{a_0+a_1\bfi+a_2\bfj+a_3\bfk \mid a_i \in \Q \text{ for all $i$}\}.
\end{equation*}
It is well known that $\alpha = a_0+a_1\bfi+a_2\bfj+a_3\bfk \in B$ solves the polynomial $X^2-2a_0X + N(\alpha) \in \Q[X]$, where $N(\alpha) := a_0^2+a_1^2+a_2^2+a_3^2$ is the norm of $\alpha$. 
\end{Not}

\begin{Lem}\label{lem: 4^n}
Let $a, b, c, d, n \in \Z$ such that $n \geq 2$. Assume that $a^2+b^2+c^2+d^2 \equiv 0 \mod{4^n}$. Then, $a, b, c$, and $d$ are all even.
\end{Lem}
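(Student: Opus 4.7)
The plan is a short case analysis modulo small powers of $2$, using only the hypothesis that $8 \mid 4^n$ for $n \geq 2$.

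First, I would reduce the congruence modulo $4$. Since any integer square is congruent to $0$ or $1$ mod $4$ (depending on whether the integer is even or odd), the quantity $a^2+b^2+c^2+d^2$ is congruent mod $4$ to the number $k$ of odd integers among $a,b,c,d$. The hypothesis forces $k \equiv 0 \pmod 4$, so $k \in \{0,4\}$: either all four are even (the desired conclusion), or all four are odd.

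Next, I would rule out the second possibility by reducing modulo $8$. For any odd integer $x$, we have $x^2 \equiv 1 \pmod 8$, since $(2m+1)^2 = 4m(m+1)+1$ and $m(m+1)$ is even. Hence if all of $a,b,c,d$ were odd, then $a^2+b^2+c^2+d^2 \equiv 4 \pmod 8$. On the other hand, because $n \geq 2$, we have $8 \mid 16 \mid 4^n$, so the hypothesis gives $a^2+b^2+c^2+d^2 \equiv 0 \pmod 8$. This is a contradiction, so the case $k=4$ is impossible.

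Combining the two steps, $k=0$ and all of $a,b,c,d$ are even. There is no real obstacle: the only subtlety is noticing that the hypothesis $n \geq 2$ is used precisely to guarantee divisibility by $8$ (the bound $n \geq 2$ is sharp, since for $n=1$ the four odd integers $1,1,1,1$ give sum $4 \equiv 0 \pmod 4$).
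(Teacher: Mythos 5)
Your proof is correct and follows essentially the same approach as the paper: reduce mod $4$ to force $a,b,c,d$ all even or all odd, then reduce mod $8$ to rule out the all-odd case, using $n\geq 2$ only to guarantee divisibility by $8$.
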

\begin{proof}
By assumption, $a^2+b^2+c^2+d^2 \equiv 0 \mod{4}$, which implies that $a, b, c$, and $d$ are either all odd or all even. If all four integers are odd, then $a^2+b^2+c^2+d^2 \equiv 4 \mod{8}$, which is a contradiction. So, $a, b, c$, and $d$ are all even.
\end{proof}

\begin{Lem}\label{lem: odd quaternions}
Let $D$ be an overring of $\Z$ in $\Q$.
\begin{enumerate}[(1)]
\item If $\alpha \in D_{\HQ}$, then $N(\alpha) \in D$. Thus, $\alpha$ is integral over $D$.
\item 
Let $D = \Z_{(2)}$ and let $a_0, a_1, a_2, a_3, e \in \Z$ be odd. Then, $(a_0+a_1\bfi+a_2\bfj+a_3\bfk)/(2e) \in D_{\HQ}$.
\end{enumerate}
\end{Lem}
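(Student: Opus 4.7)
For part (1), the plan is to split on the two cases in the definition of $D_{\HQ}$. If $\alpha = a_0 + a_1\bfi + a_2\bfj + a_3\bfk$ has all $a_i \in D$, then both the trace $2a_0$ and the norm $N(\alpha) = \sum a_i^2$ obviously lie in $D$. If instead each $a_i \in D + \tfrac{1}{2}$, write $a_i = b_i + \tfrac{1}{2}$ with $b_i \in D$ and expand:
\[
N(\alpha) = \sum_{i=0}^3 b_i^2 + \sum_{i=0}^3 b_i + 1, \qquad 2a_0 = 2b_0 + 1.
\]
Since $\Z \subseteq D$, both quantities are in $D$. In either case $\alpha$ is a root of the monic polynomial $X^2 - 2a_0 X + N(\alpha) \in D[X]$, which gives integrality.

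For part (2), the plan is to show directly that the element $\beta := (a_0 + a_1\bfi + a_2\bfj + a_3\bfk)/(2e)$ has all four coefficients in $\Z_{(2)} + \tfrac{1}{2}$, so that $\beta$ lies in the half-integer component of $(\Z_{(2)})_{\HQ}$. Writing each odd $a_i$ as $a_i = 2k_i + 1$ with $k_i \in \Z$, decompose
\[
\frac{a_i}{2e} = \frac{k_i}{e} + \frac{1}{2e}.
\]
Because $e$ is odd, it is a unit in $\Z_{(2)}$, so $k_i/e \in \Z_{(2)}$. For the second summand, the identity $\tfrac{1}{2e} - \tfrac{1}{2} = \tfrac{1-e}{2e}$ shows that $\tfrac{1}{2e} \in \Z_{(2)} + \tfrac{1}{2}$, since $1-e$ is even while $v_2(2e) = 1$. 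Hence each coefficient $a_i/(2e)$ lies in $\Z_{(2)} + \tfrac{1}{2}$ and $\beta \in (\Z_{(2)})_{\HQ}$.

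Neither part has a serious obstacle; the only point requiring care is in (2), where one must verify that all four coefficients land in the \emph{same} component (the half-integer one), since a mixture would place $\beta$ outside $D_{\HQ}$. This is ensured uniformly by the fact that all of $a_0,a_1,a_2,a_3$ are odd and $e$ is odd, which makes the "odd part" $1/(2e)$ of each coefficient contribute the same $\tfrac{1}{2}$ modulo $\Z_{(2)}$.
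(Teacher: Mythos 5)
Your proof is correct and follows essentially the same route as the paper: part (1) uses the same substitution $a_i = b_i + \tfrac{1}{2}$ and direct expansion of the norm, and part (2) uses the same decomposition of each $a_i/(2e)$ into an integral part plus $\tfrac{1}{2}$ modulo $\Z_{(2)}$. The only cosmetic difference is that the paper packages the coefficientwise check in part (2) by factoring $\bfh$ out of the numerator and then scaling by the unit $e^{-1}$, whereas you verify each coefficient separately; the underlying computation is identical.
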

\begin{proof}
(1) Let $\alpha \in D_{\HQ}$. If $\alpha = a_0+a_1\bfi+a_2\bfj+a_3\bfk$ with each $a_i \in D$, then the result is clear. If not, then each $a_i \in D+ \tfrac{1}{2}$, which means that $a_i = (1+2b_i)/2$ for some $b_i \in D$. Then, for each $i$, $(1+2b_i)^2 \in 1 + 4D$. So,
\begin{equation*}
(1+2b_0)^2 + (1+2b_1)^2 + (1+2b_2)^2 + (1+2b_3)^2 \in 4D,
\end{equation*}
and thus
\begin{equation*}
N(\alpha) = \tfrac{1}{4}\big((1+2b_0)^2 + (1+2b_1)^2 + (1+2b_2)^2 + (1+2b_3)^2\big) \in D,
\end{equation*}
as required.\\

(2) For each $0 \leq i \leq 3$, let $b_i \in \Z$ be such that $a_i = 2b_i + 1$. Then, $e$ is a unit of $\Z_{(2)}$, so \begin{equation*}
\dfrac{a_0 + a_1\bfi + a_2\bfj + a_3\bfk}{2e} = e^{-1}(\bfh + b_0 + b_1\bfi + b_2\bfj + b_3\bfk),
\end{equation*}
which is in $D_{\HQ}$.
\end{proof}

\begin{Thm}\label{thm: Z2 Hurwitz}
Let $D = \Z_{(2)}$ and $A = D_{\HQ}$. Then, $\IntQ(A)$ is a \Pruf{} domain.
\end{Thm}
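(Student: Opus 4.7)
The plan is to invoke the equivalence (i) $\Leftrightarrow$ (iii) of Theorem \ref{thm: main}(1), reducing the task to verifying $A = A'$ inside the rational quaternion division ring $B$. I would first confirm the hypotheses of Theorem \ref{thm: main}: $D = \Z_{(2)}$ is a DVR with finite residue field $\F_2$, hence an \OurD-domain (trivially, since the only relevant prime of $D_0 = \Z$ is $\mfp = 2\Z$, and $E_\mfp = F_\mfp = \{1\}$); $A = D_\HQ$ is torsion-free as a subset of the division ring $B$ and is finitely generated over $D$ by $1, \bfi, \bfj, \bfk, \bfh$; and the identity $A \cap \Q = D$ follows because the half-integer elements of $D_\HQ$ have nonzero imaginary parts, leaving only the integer part to contribute rational elements.

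To show $A' \subseteq A$, I take $\alpha \in A'$. If $\alpha \in \Q$, then $\alpha \in D \subseteq A$ because $\Z_{(2)}$ is integrally closed. Otherwise, writing $\alpha = a_0 + a_1\bfi + a_2\bfj + a_3\bfk$ with $(a_1,a_2,a_3) \neq 0$, the minimal polynomial over $\Q$ equals $X^2 - 2a_0 X + N(\alpha)$, and integrality over $D$ forces $2a_0, N(\alpha) \in \Z_{(2)}$. I will then set $n := \max\{0,-\min_i v_2(a_i)\}$ and $b_i := 2^n a_i \in \Z_{(2)}$, where at least one $b_i$ is a unit of $\Z_{(2)}$ when $n \geq 1$. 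The norm condition then reads $b_0^2 + b_1^2 + b_2^2 + b_3^2 \in 4^n \Z_{(2)}$, and the task becomes showing $n \leq 1$ and then locating $\alpha$ inside the correct part of $D_\HQ$.

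The key step is the case $n \geq 2$: the congruence $\sum b_i^2 \equiv 0 \pmod{4^n}$ transfers from $\Z_{(2)}$ to $\Z$ via the isomorphism $\Z/2^{2n} \cong \Z_{(2)}/2^{2n}\Z_{(2)}$, so Lemma \ref{lem: 4^n} forces every $b_i$ to be even, contradicting the minimality of $n$. Hence $n \in \{0,1\}$. If $n = 0$, then $\alpha$ lies in the integer part of $D_\HQ$. If $n = 1$, the condition $\sum b_i^2 \equiv 0 \pmod 4$ combined with the fact that squares in $\Z/4$ are either $0$ or $1$ forces the $b_i$ to be either all even or all odd units. The first subcase again places $\alpha$ in the integer part; in the second, each $a_i = b_i/2 = (b_i - 1)/2 + \tfrac{1}{2} \in \Z_{(2)} + \tfrac{1}{2}$, so $\alpha$ lies in the half-integer part of $D_\HQ$, which is exactly what Lemma \ref{lem: odd quaternions}(2) already records.

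Once $A = A'$ is established, Theorem \ref{thm: main}(1) immediately yields that $\Int_\Q(A)$ is Prüfer. I do not anticipate any substantive obstacle: the argument reduces to the $2$-adic sum-of-four-squares estimate already packaged in Lemma \ref{lem: 4^n}, and the rest is routine bookkeeping around the two-piece definition of the Hurwitz order.
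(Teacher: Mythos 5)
Your proposal is correct and follows essentially the same route as the paper: reduce to showing $A = A'$ via Theorem \ref{thm: main}(1), normalize a putative integral quaternion to isolate a $2$-power denominator $n$, apply Lemma \ref{lem: 4^n} to force $n \leq 1$, and then dispose of the $n=1$ case by noting that $\sum b_i^2 \equiv 0 \pmod 4$ with some $b_i$ odd forces all $b_i$ odd, landing $\alpha$ in the half-integer part of $D_{\HQ}$ (Lemma \ref{lem: odd quaternions}(2)). The only cosmetic difference is your intrinsic definition $n = \max\{0, -\min_i v_2(a_i)\}$ versus the paper's clearing of an odd denominator $e$ up front; the two are equivalent and lead to the same argument.
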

\begin{proof}
We show that $A = A'$. Let $\alpha \in B$ be integral over $D$. We may write
\begin{equation*}
\alpha = \dfrac{a_0+a_1\bfi+a_2\bfj+a_3\bfk}{2^n e}, 
\end{equation*}
where $a_0, a_1, a_2, a_3, n, e \in \Z$, $n \geq 0$, and $e$ is odd. If $n = 0$, then $\alpha \in A$. So, assume that $n \geq 1$. Then, we may further assume that at least one $a_i$ is odd. Since $\alpha$ is integral over $D$, $N(\alpha) \in D$. Thus, there exist $c, d \in \Z$ with $d$ odd such that
\begin{equation*}
N(\alpha) = \dfrac{a_0^2+a_1^2+a_2^2+a_3^2}{2^{2n}e^2} = \dfrac{c}{d},
\end{equation*}
which is equivalent to
\begin{equation*}
(a_0^2+a_1^2+a_2^2+a_3^2)d = 4^n e^2 c.
\end{equation*}
Since $d$ is odd, this means that $a_0^2+a_1^2+a_2^2+a_3^2 \equiv 0 \mod{4^n}$. If $n \geq 2$, then each $a_i$ is even by Lemma \ref{lem: 4^n}, which is a contradiction. So, $n=1$. Since at least one $a_i$ is odd and $a_0^2+a_1^2+a_2^2+a_3^2 \equiv 0 \mod{4}$, each $a_i$ must be odd. Thus, $\alpha = (a_0+a_1\bfi+a_2\bfj+a_3\bfk)/(2e) \in A$, because $e$ and all the $a_i$ are odd. Therefore, $A=A'$, and $\IntQ(A)$ is \Pruf{} by Theorem \ref{thm: main}.

\end{proof}


\begin{thebibliography}{99}
\bibliographystyle{plain}


\bibitem{AtiyahMacDonald} M. Atiyah, I. MacDonald. \emph{Introduction to Commutative Algebra}. Addison-Wesley, London, 1969.


\bibitem{CaCh} P.-J. Cahen, J.-L. Chabert. \emph{Integer-valued Polynomials}, Amer. Math. Soc. Surveys and Monographs, {\bf 48}, Providence, 1997.

\bibitem{CFSG} P.-J. Cahen, M. Fontana, S. Frisch, S. Glaz,  \emph{Open problems in commutative ring theory}. in Commutative algebra, 353–375, Springer, New York, 2014.

\bibitem{Chabert1987} J.-L. Chabert. \emph{Un anneau de \Pruf{}}, J.  Algebra 107 (1987), 1--16.

\bibitem{Chab1993} J.-L. Chabert. \emph{Integer-valued polynomials, \Pruf{} domains, and localization}. Proc. Amer. Math. Soc. 118 (1993), no. 4, 1061--1073. 


\bibitem{CR} C. W. Curtis, I. Reiner, \emph{Representation theory of finite groups and associative algebras}. Reprint of the 1962 original AMS Chelsea Publishing, Providence, RI, 2006.

\bibitem{EFJ2013} S. Evrard, Y. Fares, K. Johnson. \emph{Integer valued polynomials on lower triangular integer matrices}. Monatsh. Math. 170 (2013), no. 2, 147--160.

\bibitem{EvrardJohnson2015} S. Evrard, K. Johnson. \emph{The ring of integer valued polynomials on 2×2 matrices and its integral closure}. J. Algebra 441 (2015), 660--677.

\bibitem{Frisch2013} S. Frisch. \emph{Integer-valued polynomials on algebras}. J. Algebra 373 (2013), 414--425.

\bibitem{FrischCorr} S. Frisch. \emph{Corrigendum to ``Integer-valued polynomials on algebras'' [J. Algebra 373 (2013) 414–425]}. J. Algebra 412 (2014), 282.


\bibitem{Frisch2017} S. Frisch. \emph{Polynomial functions on upper triangular matrix algebras}. Monatsh. Math. 184 (2017), no. 2, 201--215.

\bibitem{Gilm} R. Gilmer. \emph{Multiplicative ideal theory}, Dekker, New York, 1972.

\bibitem{GilmPrufer} R. Gilmer, \emph{Pr\"ufer domains and rings of integer-valued polynomials}, J. Algebra 129 (1990), no. 2, 502-517.


\bibitem{HNR2020} J. Sedighi Hafshejani, A. R. Naghipour, M. R. Rismanchian. \emph{Integer-valued polynomials over block matrix algebras}. J. Algebra Appl. 19 (2020), no. 3, 2050053, 17 pp.

\bibitem{KimWang} H. Kim, F. Wang, \emph{Foundations of commutative rings and their modules}. Second edition, Algebr. Appl., 31, Springer, Singapore, 2024.

\bibitem{Lam1} T. Y. Lam,  A First Course in Noncommutative Rings, 2nd edition. Graduate Texts in Mathematics 131. Springer, 2001.

\bibitem{Loper1997} K. A. Loper. \emph{Another \Pruf{} ring of integer-valued polynomials}. J. Algebra 187 (1997), no. 1, 1--6.


\bibitem{LopIntD} K. A. Loper. \emph{A classification of all $D$ such that $\Int(D)$ is a \Pruf{} domain}. Proc. Amer. Math. Soc. 126 (1998), no. 3, 657--660.



\bibitem{LopWer} K. A. Loper, N. J. Werner. \emph{Generalized rings of integer-valued polynomials}. J. Number Theory 132 (2012), no. 11, 2481--2490.


\bibitem{Mulay} S. Mulay. \emph{Algebraic-integer valued polynomials}. J. Number Theory 240 (2022), 490--521.

\bibitem{NagHaf2024} A. R. Naghipour, J. Sedighi Hafshejani. \emph{Integer-valued polynomials on subsets of upper triangular matrix rings}. Comm. Algebra 52 (2024), no. 8, 3237--3247.


\bibitem{Per1} G. Peruginelli. \emph{Integral-valued polynomials over sets of algebraic integers of bounded degree}. J. Number Theory 137 (2014) 241--255.

\bibitem{PerFinite} G. Peruginelli. \emph{The ring of polynomials integral-valued over a finite set of integral elements}. J. Commut. Algebra 8 (2016), no. 1, 113--141.


\bibitem{PerTransc} G. Peruginelli, \emph{Transcendental extensions of a valuation domain of rank one}, Proc. Amer. Math. Soc. 145 (2017), no. 10, 4211-4226.

\bibitem{PerPrufer} G. Peruginelli. \emph{\Pruf{} intersection of valuation domains of a field of rational functions}, J. Algebra 509 (2018), 240--262.


\bibitem{PerDedekind} G. Peruginelli. \emph{Polynomial Dedekind domain with finite residue fields of prime characteristic}, Pacific J. Math. 324 (2023), no. 2, 333--351.



\bibitem{PerWer} G. Peruginelli, N. J. Werner. \emph{Integral closure of rings of integer-valued polynomials on algebras}, in ``Commutative Algebra: Recent Advances in Commutative Rings, Integer-Valued Polynomials, and Polynomial Functions'', M. Fontana, S. Frisch and S. Glaz (editors), Springer 2014, pp. 293--305.


\bibitem{PerWerPropInt} G. Peruginelli, N. J. Werner.  \emph{Properly Integral Polynomials over the Ring of Integer-valued Polynomials on a Matrix Ring}, J. Algebra 460 (2016) 320--339.

\bibitem{PerWerNontrivial} G. Peruginelli, N. J. Werner. \emph{Non-triviality Conditions for Integer-valued Polynomial Rings on Algebras}, Monatsh. Math. 183 (2017), no. 1, 177--189. 

\bibitem{Reiner} I. Reiner, Maximal Orders, London Math. Soc. Monogr. (N.S.), vol.28, The Clarendon Press, Oxford University Press, Oxford, 2003. Corrected reprint of the 1975 original.

\bibitem{Rib} P. Ribenboim, \emph{The theory of classical valuations}. Springer Monogr. Math. Springer-Verlag, New York, 1999.

\bibitem{Rush} D. E. Rush. \emph{The conditions $\Int(R)\subseteq R_S[X]$ and $\Int(R_S)=\Int(R)_S$ for integer-valued polynomials},  J. Pure Appl. Algebra 125 (1998), no. 1-3, 287--303.

\bibitem{Werner2010} N. J. Werner. \emph{Integer-valued polynomials over quaternion rings}. J. Algebra 324 (2010), no. 7, 1754--1769.

\end{thebibliography}
\end{document}